\newcommand{\printname}[1]
  {\smash{\makebox[0pt]{pace{-2.0in}\raisebox{8pt}{\tiny #1}}}}
\newtheorem {Lemma} {Lemma} [section]
\newtheorem {lemma} {Lemma} [section]
\newtheorem {Theorem}[Lemma]{Theorem}
\newtheorem {theorem}[Lemma]{Theorem}
\newtheorem {Proposition}[Lemma]{Proposition}
\newtheorem {Corollary}[Lemma]{Corollary}
\theoremstyle{definition}
\newtheorem{Definition}[Lemma]{Definition}
\theoremstyle{remark}
\newtheorem{Remark}[Lemma]{Remark}
\newtheorem{remark}[Lemma]{Remark}
\newcommand {\cal}[1]  {{\mathcal{#1}}}
\newcommand{\nab}[1][]{\ensuremath{\mathrm{\nabla}{#1}}}
\newcommand{\0}{\operatorname{\mathrm{\underline{0}}}}
\newcommand{\be}{\begin{equation}}
\newcommand{\ee}{\end{equation}}
\newcommand*\xbar[1]{%
  \hbox{%
    \vbox{%
      \hrule height 0.5pt 
      \kern0.5ex
      \hbox{%
        \kern-0.1em
        \ensuremath{#1}%
        \kern-0.1em
      }%
    }%
  }%
}
\newcommand{\del}{\partial}
\newcommand{\lb}[1]{\label{#1}}
\newcommand{\al}{\alpha}
\newcommand{\om}{\omega}
\newcommand{\we}{\wedge}
\newcommand{\lam}{\lambda}
\newcommand{\Ref}[1]{(\ref{#1})}
\newcommand{\fr}{\frac}
\newcommand{\ri}{\mathrm{Ric}}
\def\t{\tau}
\newcommand{\RR}{\mathbb{R}}
\newcommand{\EE}{\cal{E}}
\newcommand{\Ss}{\calS}
\newcommand{\Es}{\EE/\Ss}
\newcommand{\VV}{\cal{V}}
\newcommand{\HH}{\cal{H}}
\newcommand{\lan}{\langle}
\newcommand{\ran}{\rangle}
\newcommand{\aA}{\mathbf{a}}
\newcommand{\fF}{\mathbf{b}}
\newcommand{\sS}{\mathbf{s}}
\newcommand{\tT}{\mathbf{t}}
\newcommand{\pP}{\mathbf{p}}
\newcommand{\qQ}{\mathbf{q}}
\newcommand{\rR}{\mathbf{r}}
\newcommand{\indl}{\operatorname{ind_\ell}}
\newcommand{\ind}{\operatorname{ind_{\,e^{-X}}}}
\newcommand{\p}{\partial}
\newcommand{\pM}{{\partial{}M}}
\newcommand{\calL}   {{\mathcal L}}
\newcommand{\ppM}{{\partial{}M}}
\renewcommand{\b}{\bullet}
\newcommand{\n}{\nabla}
\newcommand{\tiln}{\widetilde{\nabla}}
\newcommand{\CC}{\mathbb{C}}
\newcommand{\ZZ}{\mathbb{Z}}
\newcommand{\gM}{g^M}
\newcommand{\gpM}{g^{\pM}}
\newcommand{\End}{\operatorname{End}}
\newcommand{\E}{{\mathcal{E}}}
\newcommand{\W}{{\mathcal{W}}}
\newcommand{\calS}{{\mathscr{S}}}
\newcommand{\calR}{{\mathscr{R}}}
\newcommand{\calV}{{\mathcal{V}}}
\newcommand{\coker}{\operatorname{coker}}
\newcommand{\Tr}{\operatorname{Tr}}
\newcommand{\oc}{\bar{c}}
\newcommand{\grg}{\mathfrak{g}}
\newcommand{\Str}{\operatorname{Str}}
\newcommand{\ch}{\operatorname{ch}}
\newcommand{\Tch}{\operatorname{\emph{T}\,ch}}
\newcommand{\sign}{\operatorname{sign}}
\newcommand{\even}{\mathrm{even}}
\newcommand{\odd}{\mathrm{odd}}
\newcommand{\tilF}{\widetilde{F}}
\newcommand{\tilD}{\widetilde{D}}
\begin{document}
\title[Equivariant APS index]{Equivariant APS index for Dirac operators of non-product type near the boundary}
\author[Maxim Braverman and Gideon Maschler]{Maxim Braverman${}^\dag$ and Gideon Maschler}

\thanks{${}^\dag$Partially supported by the Simons Foundation collaboration grant  \#G00005104.}





\subjclass[2010]{Primary 19K56; Secondary 53C56, 58J20}

\numberwithin{equation}{section}
\thispagestyle{empty}
\setcounter{equation}{0}
\address{Northeastern University\\
Department of Mathematics\\
360 Huntington Avenue\\
Boston, MA 02115, USA}
\email{maximbraverman@neu.edu}
\address{Department of Mathematics and Computer Science\\ Clark University\\
Worcester, Massachusetts 01610, U.S.A.}
\email{gmaschler@clarku.edu}

\begin{abstract}
We consider a generalized Atiyah-Patodi-Singer boundary problem for a $G$-invariant Dirac-type operator, which is not of product type near
the boundary.  We establish a delocalized version (a so-called Kirillov formula) of the equivariant index theorem for this
operator. We obtain more explicit formulas for different geometric Dirac-type operators. In particular, we get a formula for the equivariant signature  of a local system over a manifold with boundary.  In case of a trivial local system, our formula
can be viewed as a new way to compute the infinitesimal equivariant eta-invariant of S.~Goette. We explicitly compute all
the terms in this formula, which involve the equivariant Hirzebruch $L$-form and its transgression, for four-dimensional K\"ahler manifolds admitting a special K\"ahler-Ricci potential (SKR metrics), a class including many K\"ahler conformally Einstein manifolds, in the case where the boundary is given as the zero level set of this Killing potential. In the case
of SKR metrics which are local K\"ahler products, these terms are zero, and we obtain a vanishing result for the infinitesimal equivariant eta invariant.
\end{abstract}

\maketitle

\section{Introduction}
\setcounter{equation}{0}

In a seminal series of papers \cite{aps,aps2,aps3} Atiyah, Patodi and Singer studied the index of a boundary value problem for a first order elliptic differential operator $D$ on a compact manifold with boundary. They assumed that all the structures are product near the boundary and introduced the famous APS boundary conditions -- non-local boundary conditions defined using the spectrum of the restriction of $D$ to the boundary. They proved that the boundary value problem thus obtained is Fredholm and showed that its index is equal to the sum of contributions from the bulk and from the boundary. The bulk contribution is the integral of the usual Atiyah-Singer form, while the boundary contribution is the $\eta$-invariant of the restriction of $D$ to the boundary. One of the main applications of this index formula in \cite{aps2} is to the computation of the signature of a local system on a manifold with boundary.

Grubb \cite{grb} showed how to extend the Atiyah-Patodi-Singer construction  to the case where $D$ is not a product near the boundary. In this case the contribution of the boundary to the index consists of two terms -- the $\eta$-invariant and an integral of a certain differential form defined by $D$ over the boundary. Gilkey \cite{gil,glk1,glk1} used invariance theory to compute this integral for different geometric Dirac operators, including the twisted spin-Dirac and the twisted signature operator. In all cases the integrand turned out to be a certain transgression form over the boundary.

Hitchin \cite{hit} showed that in the case of the signature operator on 4-dimensional conformal compactifications of asymptotically hyperbolic Einstein manifolds, the integral of the transgression form over the boundary vanishes. Thus even in the non-product case, the index of the APS-boundary value problem is equal to the sum of the bulk contribution and the $\eta$-invariant. Moroianu \cite{mor} recently obtained a similar result for the signature operator on locally conformally flat manifolds in any dimension. Maschler \cite{mas} used the result of Hitchin to give, in dimenson four, an explicit
computation of the $\eta$-invariant for the conformally Einstein SKR metrics introduced in \cite{dm1,dm2}. One of the goals of the present paper is to give an equivariant analogue of the computations in \cite{mas}.

Donnelly \cite{don} considered an action of a compact Lie group $G$ on a compact manifold with boundary. Assuming that all the structures are product near the boundary, he defined a $G$-equivariant  analogue of the APS-index and computed it as a sum of fixed points contributions from the bulk and the equivariant $\eta$-invariant of the boundary. Goette \cite{goe} obtained a delocalized version (a so-called Kirillov formula \cite[Ch.~8]{bgv}) of the equivariant index theorem for $G$-equivariant Dirac operators which are product near the boundary. An important ingredient of Goette's formula is his infinitesimal version
of the equivariant $\eta$-invariant $\eta_X$, which is a function on the Lie algebra $\grg$ of $G$.

In this paper our main results are as follows. First, we generalize the results of \cite{goe} to the case of $G$-equivariant Dirac operators which are not product near the boundary. We then write the resulting index formula in more specialized forms, for the twisted spin-Dirac operator and the twisted signature operator, in accordance with different choices of specific boundary conditions. For the (non-twisted) signature operator, we compute the equivariant Hirzebruch $L$-form and the relevant component of its transgression form, both appearing in the index formula, for the case of the above mentioned SKR metrics on $4$-manifolds with a special type of boundary and equipped with a natural $U(1)$-action, even when the metrics are not conformal compactifications of Einstein metrics. Altogether these latter computations furnish, for these metrics, a formula for Goette's equivariant infinitesimal eta invariant $\eta_X$. For SKR metrics which are also reducible, that is, have the form of a (certain) local K\"ahler product, we obtain a vanishing result for the equivariant $L$-form and the degree three component of its transgression form, and consequently for $\eta_X$.

In more detail, our approach to the delocalized equivariant index theorem in the non-product case is similar to that of Gilkey \cite{glk1}, but we do not use invariance theory. Instead we give an explicit construction of the deformation of such an operator $D$ to a Dirac operator $D_0$ which is a product near the boundary. More precisely, suppose  $(\E,c,\n^\E)$ is a Dirac bundle over a manifold with boundary $M$. Here $c:T^*M\to \End(\E)$ is a Clifford action, and $\n^\E$ is a Clifford connection. We consider a deformation $(\E,c_t,\n^\E_t)$ ($0\le t\le 1$) of the Dirac structure on $\E$, which for $t=1$ coincides with  $(\E,c,\n^\E)$  and for $t=0$ is a product near the boundary. The existence of such a deformation is well known to experts, but we were not able to find the construction in the literature. We present a construction of such a deformation in the appendix. Let $D_t$ be the Dirac operator associated to the Dirac structure $(\E,c_t,\n^\E_t)$, so that $D_1=D$ and $D_0$ is a product near the boundary. Let $A_\pM$ denote the restriction of $D$ to the boundary and let $(D_t,A_\pM)$ be the operator $D_t$ with APS boundary conditions defined by $A_\pM$. Then, cf. Theorem~\ref{T:stability of the index}, the equivariant indices of $D$ and $D_0$ are equal. In Section~\ref{S:ind-form} we carefully study Goette's index formula for the operator $D_0$ and express the components of this formula in terms of the original Dirac bundle $(\E,c,\n^\E)$. This  leads to the delocalized equivariant index formula for a Dirac operator which is not a product near the boundary, cf. Theorem~\ref{T:genral APS}.

We note that the index in question depends on the boundary condition $A_\pM$. In Section~\ref{S:special bc} we make a convenient choice of the boundary condition closely associated with that of the twisted spin complex and get a more explicit version of the APS-type equivariant index formula in this case.

In Section~\ref{S:signature} we compute the equivariant index of an equivariant signature operator twisted by a vector bundle $\calV$, cf. Theorem~\ref{T:sign=}. For the special case where the bundle $\calV$ is flat, the equivariant signature of the local system defined by $\calV$ is closely related to the equivariant index of the twisted signature operator, cf. formula (2.4) of \cite{don} and formula \eqref{E:Gsgn} of the present paper. Using this fact we compute the equivariant signature of $\calV$ in Corollary~\ref{C:sign=}.

In the last part of Section~\ref{S:signature} we consider the (untwisted) signature of the manifold $M$. If the group $G$ is connected, the action of $G$ on cohomology is trivial. Hence, the equivariant signature is equal to the non-equivariant signature $\sign(M)$. We then view the equivariant index formula as a way to compute the infinitesimal $\eta$-invariant of Goette. Up to the integer term $\sign(M)$, this invariant is now equal to the sum of two integrals -- the integral of the equivariant $L$ form $L_\grg(X)$ ($X\in \grg$) over the bulk and the integral of its equivariant transgression form $TL_\grg(X)$  over the boundary.

In Section~\ref{S:signature 4D} we write the transgression form $TL_\grg(X)$ as a power series in $X\in \grg$ and present an explicit computation of its component of geometric degree three, having in mind the application of this result to the
computation of $TL_\grg(X)$'s boundary integral on a $4$-manifold. The rest of the paper is devoted to explicit computations of $L_\grg(X)$ and $TL_\grg(X)$ for a particular class of metrics in dimension four.

In Section~\ref{S:examples} we consider SKR metrics on a $4$-manifold with boundary $M$, where the boundary is given as the
zero level set of a certain Killing potential. After proving a formula giving the full curvature matrix of such a metric,
we develop an explicit expression for its associated equivariant $L$-form (Proposition~\ref{P:LXM} and Corollary~\ref{L-expl}). This may be considered a generalization to the equivariant case of the result in \cite{mas},
although the method of proof, adopted from \cite{goe}, is quite different. We end this section with a proof of formula~\Ref{final-trns} in Proposition \ref{TLg}, giving the pull-back to the boundary of the degree three component of $TL_\grg(X)$ for such SKR metrics. The derivation relies in part on the insights of Moroianu \cite{mor} in the non-equivariant case. This formula is fairly complex, and it is not trivial to determine from it whether the equivariant transgression form vanishes for (general) conformally Einstein SKR metrics. We leave this question open. Instead, cf.
Subsection~\ref{SS:vanishing}, we show that this pulled-back component of $TL_\grg(X)$ vanishes for the case where the SKR
metric on $M$ is reducible (whether or not it is conformally Einstein). As mentioned above, we thus have a partial analogue of Hitchin's vanishing result \cite{hit} in the equivariant setting. However, for such reducible SKR metrics, the calculation of the degree four component of $L_\grg(X)$ also yields zero, so that the infinitesimal equivariant eta invariant is determined only by the signature, which also happens to vanish for manifolds admitting such a metric. We thus obtain a vanishing of the infinitesimal
equivariant eta invariant in this case in Proposition \ref{van-eta}. The beginning
of section \ref{S:examples} contains some remarks on the relation of this result to
Hitchin's bound on the non-equivariant eta invariant, for conformal structures on the $3$-sphere
obtained from conformal compactifications of complete self-dual Einstein metrics on the $4$-ball.

\section*{Acknowledgements}
The second author would like to thank the Mathematics Department of Northeastern
University for hosting him as a visitor during the time work on this paper began.
Both authors thank the referee for insightful critique and several suggestions
that improved the presentation.

\section{The equivariant APS index}\label{S:APS index}

In this section we recall the definition of the equivariant index of a boundary value problem for a first order operator with  generalized Atiyah-Patodi-Singer  boundary conditions. We don't assume that our operator is a product near the boundary.

Throughout the paper $M$ is a compact  even dimensional manifold  with boundary and $G$ is a compact group, which acts on $M$ by
\begin{equation}\label{E:action}
		(\ell,x)\ \mapsto \ \ell\cdot x\in M, \qquad \ell\in G,\ x\in M.
\end{equation}
We endow $M$ with a $G$-invariant Riemannian metric $g^M$. We do not assume that this metric is a product near the boundary.

\subsection{The operator}\label{SS:operator}
Suppose $\E=\E^+\oplus\E^-$ is a complex $G$-equivariant  $\ZZ_2$-graded vector bundle over $M$ and let
\[
	D^+:\,C^\infty(M,\E^+) \ \longrightarrow \ C^\infty(M,\E^-)
\]
be a $G$-invariant first order linear elliptic  differential operator.

Let $h^\E$ be a $G$-invariant Hermitian metric on $\E$ such that the subbundles $\E^\pm$ are orthogonal to each other. We denote by $D^-:C^\infty(M,\E^-) \to C^\infty(M,\E^+)$ the formal adjoint of $D^+$ with respect to the $L^2$ scalar product defined by $h^\E$ and the Riemannian metric $g^M$. Consider the operator
\[
	D\ =\ D^+\oplus{}D^-:\,C^\infty(M,\E) \ \longrightarrow \ C^\infty(M,\E).
\]
This is a formally self-adjoint elliptic $G$-invariant operator on $M$.

\subsection{An equivariant collar}\label{SS:eqcollar}
To impose the boundary conditions on $D$ we first introduce convenient $G$-invariant coordinates near the boundary.

By \cite[Theorem~3.5]{kan} there exists a $G$-invariant open neighborhood $U$ of $\pM$ and a $G$-equivariant diffeomorphism
\[
	\psi:\,\pM \times (-\infty,0]\to U
\]
such that $\psi(y,0)=y$ for every $y\in\pM$ and the action of $G$ is given by
\begin{equation}\label{E:action collar}
		\ell\cdot \psi(y,u)\ = \ \psi(\ell\cdot y,u),
		\qquad \ell\in G,\ y\in\pM,\ u\in (-\infty,0].
\end{equation}

For simplicity of notation we omit $\psi$ and simply identify $U$ with the product $\pM \times (-\infty,0]\subset M$. Thus we write
\[
	M\ = \ \big(M\backslash U\big)\sqcup \big(\,\pM\times (-\infty,0]\,\big).
\]
We refer to $U\simeq \pM\times(-\infty,0]$ as a {\em $G$-equivariant collar} of $\pM$.

\subsection{A product structure on $\E$}\label{SS:product on E}
Fix a $G$-invariant grading-preserving connection $\n^\E$ on $\E$.
We use $\n^\E$ to identify the fibers of $\E$ along the rays
$(y,u)\in U=\pM\times(-\infty,0]\subset M$. Thus we obtain an isomorphism
\[
	\phi:\,\E\big|_U \ \tilde{\to}\ E\times(-\infty,0],
\]
where $E=E^+\oplus E^-$ is a $G$-invariant bundle over $\pM$. By a slight abuse of notation we will omit $\phi$ and simply write $\E\big|_U= E\times(-\infty,0]$. Thus we view a section $f$ of $\E^\pm$ over $U$ as a family of sections
\[
	f(\cdot,u) \ \in \ C^\infty(\pM,E), \qquad u\in (\infty,0].
\]
Thus for a section $f$, and $(y,u)\in \pM\times(-\infty,0]$ we identify $f(y,u)\in \E_{(y,u)}$ as an element of the fiber $E_y$ of $E$ over $y$. With this identification we have
\[
	\n^\E_{\frac{\p}{\p u}}f\ = \ \frac{\p}{\p u}f(y,u),
\]
where in the left hand side $\frac{\p}{\p u}$ denotes the vector field along the ray $\pM\times(-\infty,0]$.

In particular, if $y\in \pM$ and  $e\in E_y$, we consider $(e,u)$ ($u\le0$) as a section of $\E$ over the ray $\{y\}\times(-\infty,0]$. This is a parallel section, i.e.
\begin{equation}\label{E:flat(e,u)}
	\n^\E_{\frac{\p}{\p u}}\, (e,u)\ = \ 0.
\end{equation}

Since the connection $\n^\E$ is $G$-invariant the action of $G$ on $E\times(-\infty,0]$ is given by
\begin{equation}\label{E:action on E}
	\ell\cdot (e,u)\ = \ (\ell\cdot e, u),
\end{equation}
where $\ell\in G, \ e\in E,\ u\in (-\infty,0]$.

For $u\in (-\infty,0]$ we denote by $\n^\E_u$ the restriction of $\n^\E$ to $\pM\times\{u\}\subset M$.

\subsection{The operator near the boundary}\label{SS:boundary form}
The restriction of $D$ to $U=\pM\times(-\infty,0]$ can be written as
\begin{equation}\label{E:boundary form}
	D\big|_U\ = \ \gamma(u)\,\Big(\,
	  \frac{\p}{\p u}\ + \ B(u)\,\Big),
\end{equation}
where  for each $u\in (-\infty,0]$, $B(u):C^\infty(\pM,E^\pm\times\{u\})\to C^\infty(\pM,E^\pm\times\{u\})$ is a first order differential operator and $\gamma(u):E^+\to E^-$ is a bundle map.

In what follows we will consider the situation when $D$ is a Dirac operator associated to a Dirac bundle defined by the geometry of the manifold. Then it often happens that the connection $\n^\E_u$ is different from the natural geometric connection on $E\times\{u\}$. If we denote by $A(u)$ the Dirac operator on $E\times\{u\}$ associated to such a natural connection, then
\begin{equation}\label{E:Phi(u)}
	\Phi(u)\ := \ B(u)\ - \ A(u)
\end{equation}
is a bundle map, cf. \cite[p.~117]{bgv}. More generally, we fix a self-adjoint continuous family of $G$-invariant bundle maps $\Phi(u):E^\pm\to E^\pm$ and define $A(u)$ by \eqref{E:Phi(u)}. Then $A(u)$ is a $G$-equivariant operator and
\begin{equation}\label{E:boundary form 2}
	D\big|_U\ = \ \gamma(u)\,\Big(\,
	  \frac{\p}{\p u}\ + \ A(u)\ + \ \Phi(u)\,\Big).
\end{equation}

\subsection{Generalized APS boundary conditions}\label{SS:APS}
We are using the restriction of the operator $A(0)$ to $E^+\times \{0\}$, which we denote $A_\pM$,
to define the boundary conditions for $D$. Specifically, we denote by
\begin{equation}\label{E:PA}
	P_A:\, C^\infty(\p M,E^+)\ \to \ C^\infty(\p M,E^+),
\end{equation}
the spectral projection of the operator $A_\pM$, whose image is the
span of the eigensections of $A_\pM$ with non-negative eigenvalues. Let
\[
V^+_A= \left\{s\in C^\infty(M,\E^+):P_A(s\big|_{\pM})=0\right\}. 
\]
Then the operator
\begin{equation}\label{E:D on V+}
	 	D^+:\, V^+_A	\ \to \ C^\infty(M,\E^-),
\end{equation}
is Fredholm, cf. \cite[Section~2]{grb}. Hence, its kernel $\ker(D^+,A_\ppM)$ and cokernel $\coker(D^+,A_\ppM)$ are finite dimensional representations of $G$.

\begin{Definition}\label{D:equivariant index}
The {\em equivariant index}  $\indl(D,A_\pM)$ of $D$, with respect to generalized APS boundary conditions defined by $A_\pM$,
is a function of $\ell\in G$ defined by
\begin{equation}\label{E:equivariant index}
	\indl\big(D,A_\pM\big)\ := \ \Tr\ \ell\big|_{\ker(D^+\!,\,A_{\p\!M})}\ - \
	\Tr \ell\big|_{\coker(D^+\!,\,A_{\p\!M})},
	\qquad \ell\in G.
\end{equation}
\end{Definition}

\begin{remark}\label{R:need of P}
We refer to the boundary conditions defined by $A_\pM$ as {\em generalized APS boundary conditions}, whereas the classical APS boundary conditions are defined using the spectral projection of  $B(0)$. We use generalized APS boundary conditions, since in what follows, $A$ will be the geometric
Dirac operator on $\pM$, which is often different from the operator $B$ defined by the restriction of $D$ to the boundary. We notice, that, in general, the index of the obtained boundary value problem may vary with a change of $A$, cf. \cite[p. 314]{glk1}.
\end{remark}

\section{The equivariant characteristic classes}\label{S:eq char class}

The equivariant APS-index theorem computes  the index \eqref{E:equivariant index} in terms of the equivariant characteristic classes on $M$ and the equivariant eta-invariant. In this section we recall the construction of the equivariant characteristic classes. We mostly follow the exposition of \cite[Ch.~7]{bgv}.

\subsection{The equivariant De Rham complex}\label{SS:eq DeRham}
Let $\grg$ denote the Lie algebra of $G$. For $X\in \grg$ we denote by $X_M$ the vector field on $M$ defined by
\begin{equation}\label{E:XM}
	X_M(x) \ :=\ \frac{d}{dt}\Big|_{t=0}\exp\big(-tX)\cdot x, \qquad x\in M.
\end{equation}

We denote by $\Omega^\b(M)[\grg]$ the set of polynomials in $\grg$ with coefficients in $\Omega^\b(M)$. The group $G$ acts naturally on $\Omega^\b(M)[\grg]$, cf. \cite[\S7.1]{bgv}, and we denote by $\Omega_G(M)$ the set of $G$-invariant elements of $\Omega^\b(M)[\grg]$.

The elements $\omega\in \Omega_G(M)$ are called the {\em equivariant differential forms} on $M$.

Similarly, we define the space $\Omega_G(M,\E)$ of equivariant differential forms with values in a $G$-equivariant graded vector bundle $\E=\E^+\oplus\E^-$ over $M$.

The {\em equivariant de Rham differential} is the operator $d_\grg:\Omega_G(M)\to \Omega_G(M)$ defined by
\begin{equation}\label{E:eq differential}
	d_\grg(\omega)(X)\ := \ d\omega(X)\ - \ \iota_{X_M}\omega(X),
	\qquad X\in \grg, \ \ \omega\in \Omega_G(M).
\end{equation}
One checks that $d_\grg^2=0$.

The {\em equivariant cohomology} of $M$ is defined by
\[
	H_G(M)\ := \  \ker d_\grg/\operatorname{im} d_\grg.
\]

\subsection{The equivariant connection}\label{SS:eq connection}
Let $\n^\E$ be a $G$-invariant connection on $\E$. The {\em $G$-equivariant connection}  $\n^\E_\grg:\Omega_G(M,\E)\to \Omega_G(M,\E)$ is defined by the formula
\begin{equation}\label{E:eq connection}
	\n^\E_\grg \omega(X)\ := \ \n^\E\omega(X)\ - \iota_{X_M}\omega(X),
	\qquad X\in \grg, \ \ \omega\in \Omega_G(M,\E),
\end{equation}

\subsection{The moment}\label{SS:moment}
For $X\in \grg$ let  $\calL^\E_X$ denote the infinitesimal action of $X$ on $\Omega^\b(M,\E)$. This action extends to $\Omega_G(M,\E)$%
\footnote{Notice that even though $\Omega_G(M,\E)$ is the set of $G$-invariant elements of $\Omega^\b(M\,E)[\grg]\simeq \Omega^\b(M,\E)\otimes\CC[\grg]$ the restriction of $\calL^\E_X$ to $\Omega_G(M,\E)$ is not trivial. This is because $\calL_X^\E$ acts only on the first factor, while the elements of $\Omega_G(M,\E)$ are $G$-invariant with respect to the diagonal action of $G$.}.

The {\em moment of $X$ with respect to the connection $\n^\E$} is the element
\begin{equation}\label{E:moment}
	\mu^\E(X)\ := \ \calL^\E_X\ - \ \n^\E_{X_M} \ \in \
		\Omega^\b\big(M,\End(\E)\big).
\end{equation}

In the special case when $\E= TM$ is the tangent bundle endowed with the Levi-Civita connection $\n^{LC}$, the moment can be computed explicitly, cf. Example~7.8 in Section~7.1 of \cite{bgv}. The answer is
\begin{equation}\label{E:Riemannian moment}
	\mu^{TM}(X)\,Y\ = \ -\n^{LC}_YX_M, \qquad X\in \grg,\ Y\in C^\infty(M,TM),
\end{equation}
and the formula holds for the moment of any other torsion-free connection on $TM$.

\subsection{The equivariant curvature}\label{SS:eq curvature}
Let $F^\E:=(\n^\E)^2\in \Omega^2(M,\End(\E))$ be the curvature of the connection $\n^\E$.  The {\em equivariant curvature} $F^\E_\grg\in \Omega_G(M)$ of the equivariant connection $\n^\E_\grg$ is defined by
\begin{equation}\label{E:eq curvature}
	F^\E_\grg(X)\ := \ F^\E\ + \ \mu^\E(X), \qquad X\in \grg.
\end{equation}

If $\E=TM$ we denote the equivariant curvature by $R_\grg(X)$.

\subsection{The equivariant characteristic classes}\label{SS:eq char classes}
Let $f(z)$ be a polynomial in one complex variable $z$. Then $f(F_\grg^\E)\in \Omega_G(M,\End(E))$. Let $\Str:\Omega_G(M,\End(\E))\to \Omega_G(M)$ be the supertrace defined using the grading $\E=\E^+\oplus\E^-$. Then, cf. \cite[Theorem~7.7]{bgv} the form
\begin{equation}\label{E:betaX}
	\beta_\grg(\n^\E)(X)\ := \
	\Str\big(f(F^\E_\grg(X))\big)
\end{equation}
is equivariantly closed,
\[
	d_\grg\Str\big(f(F^\E_\grg)\big)\ = \ 0.
\]
We refer to $\beta_\grg$ as the {\em equivariant characteristic form} associated to $f$. Its class $\Big[\Str\big(f(F^\E_\grg)\big)\Big]\in H_G(M)$ is called the {\em equivariant characteristic class} corresponding to $f(z)$.

More generally if $f(z)$ is a germ of an analytic function near zero, the form
\[
	f(F_\grg^\E(X)) \ := \ f\big(F+\mu^\E(X)\big)
	\ \in \ \Omega^\b(M,\End(\E)),
\]
is well defined for small enough $X\in \grg$. Thus we can define an equivariant characteristic form $\beta_\grg(\n^\E)(X)$ by \eqref{E:betaX}.

There is also another equivariant characteristic form associated with $f$ defined by
\[
	\tilde{\beta}_\grg(\n^\E)(X) \ := \ \exp\Big(\Str\big(f(F^\E_\grg(X))\big)\Big).
\]
In particular, we consider the {\em equivariant Chern form}
\begin{equation}\label{E:chern}
	\ch_\grg(\n^\E)(X)\ := \ \Str \exp\big(-F_\grg^\E(X)\big),
\end{equation}
the equivariant $\hat{A}$-genus
\begin{equation}\label{E:A genus}
	\hat{A}_\grg(g^M)(X)\ := \ {\det}^{1/2}\left(
		\frac{R_\grg(X)/2}{\sinh\big(R_\grg(X)\big)/2}\right)
		\ = \ \exp\left(\, \frac12\Tr\log\left(
		\frac{R_\grg(X)/2}{\sinh\big(R_\grg(X)\big)/2}\right)\,\right),
\end{equation}
and the {\em equivariant Hirzebruch $L$-form}
\begin{equation}\label{E:L form}
	L_\grg(g^M)(X)\ := \ {\det}^{1/2}\left(
		\frac{R_\grg(X)/2}{\tanh\big(R_\grg(X)\big)/2}\right).
\end{equation}

\section{The equivariant APS-theorem in the product case}\label{S:product case}

In \cite{don}, Donnelly generalizes the Atiyah-Segal-Singer fixed point formula for manifolds with boundary. In particular, he defined a $G$-equivariant version of the eta-invariant. In \cite{goe}, S.~Goette introduced an infinitesimal version of the equivariant eta invariant and used it to obtain an equivariant version of the Atiyah-Patodi-Singer index for the case when all the structures are product near the boundary. The theorem of Goette extends the ``delocalized" equivariant index formula of Berline and Vergne \cite{BerlineVergne85} (see also \cite[Ch.~8]{bgv}) to manifolds with boundary.  In this section we briefly review the results of Goette.

\subsection{An equivariant Dirac bundle}\label{SS:Dirac bundle}
Let $\gM$ be a $G$-invariant metric on $M$. We do not assume that this metric has a product structure near the boundary.

Recall \cite[Definition~II.5.2]{lami} that a (graded) Clifford module over $M$ is a Hermitian bundle $\E=\E^+\oplus \E^-$ together with a  map
\begin{equation}\label{E:Clifford action}
		c:\, T^*M\ \to \ \End(\E).
\end{equation}
such that for all $\xi\in T^*M$, we have $c(\xi)^2= -\gM(\xi,\xi)$, and $c(\xi)$ is a skew adjoint bundle map such that $c(\xi)(\E^\pm)= \E^\mp$. We refer to \eqref{E:Clifford action}
as a {\em Clifford action} of $T^*M$ on $\E$.

We say that $\E=\E^+\oplus \E^-$ is a {\em $G$-equivariant Clifford module} over $M$ if $\E$ is a  $G$-equivariant bundle over $M$, the $G$ action preserves the Hermitian metric and the grading on $\E$, and the action \eqref{E:Clifford action} is $G$-equivariant.

A {\em Clifford connection} on $\E$  is a $G$-invariant Hermitian connection  $\n^\E=\n^{\E^+}\oplus\n^{\E^-}$ which is compatible with the Clifford action \eqref{E:Clifford action}  in the sense that
\begin{equation}\label{E:Clifford connection}
 	\n^\E_v \big(\,c(\xi)\cdot s\,\big)\ = \
 	\big(\,c(\n^{LC}_v\xi\,\big)\cdot s \ + \
 	c(\xi)\cdot\n^\E_vs,
\end{equation}
where $v\in TM,\ \xi\in C^\infty(M,T^*M),\ s\in C^\infty(M,\E)$, and $\n^{LC}$ stands for the Levi-Civita connection on $T^*M$.

\begin{Definition}\label{D:Dirac bundle}
A {\em $G$-equivariant Dirac bundle} over $M$ is a $G$-equivariant graded Clifford module over $M$ endowed with a $G$-invariant Clifford connection.
\end{Definition}

\subsection{A Dirac operator}\label{SS:Dirac operator}
The Clifford action \eqref{E:Clifford action} defines a map
\[
	c:\,C^\infty(M,T^*M\otimes\E)\ \to\ C^\infty(M,\E),
	\qquad c(\xi\otimes e)\ := \ c(\xi)e.
\]

\begin{Definition}\label{D:Dirac operator}
A {\em Dirac operator} associated to the Dirac bundle $(\E,c,\n^\E)$ is defined as the composition
\begin{equation}\label{E:Dirac operator}
  \begin{CD}
        C^\infty(M,\E) @>\n^\E>> C^\infty(M,T^*M\otimes \E) @>c>>
        C^\infty(M,\E).
  \end{CD}
\end{equation}
\end{Definition}

In local coordinates, this operator may be written as
$D=\sum\,c(dx^i)\,\n^\E_{\frac{\p}{\p x^i}}$. Note that $D$ sends even sections
to odd sections and vice versa, giving operators $D^\pm:\, C^\infty(M,\E^\pm)\to
C^\infty(M,\E^\mp)$. If the Dirac bundle $(\E,c,\n^\E)$ is $G$-equivariant, then so is $D$.

\subsection{A product Dirac bundle}\label{SS:produc Dirac}
Let $g^\pM:= g^M\big|_\pM$ denote the restriction of the Riemannian metric $g^M$ to the boundary. Let $g^{\pM\times(-\infty,0]}$ denote the metric on the cylinder $U= \pM\times(-\infty,0]$ which is the product of $\gpM$ on $\pM$ and the standard metric on $(-\infty,0]$.

For $y\in \pM$ and $u\le0$ we identify $T^*_{(y,u)}U$ with $T^*_u\pM\times\RR$.

\begin{Definition}\label{D:product gM}
Let $a<0$. We say that the metric $g^M$ is a {\em product} on the cylinder $\pM\times[a,0]\subset \pM\times(-\infty,0]$  if the  restriction of $g^M$ to  $\pM\times[a,0]$ coincides with the restriction of the product metric $g^{\pM\times(-\infty,0]}$.

If there exist $a<0$ such that $g^M$ is a product on $\pM\times[a,0]$ we say that $g^M$ is a  {\em product near the boundary}.
\end{Definition}

\begin{Definition}\label{D:product Dirac}
Let $a<0$ and suppose that the metric $g^M$ is a product on $\pM\times[a,0]$. A $G$-equivariant Dirac bundle $\E=\E^+\oplus \E^-$ over $M$ is called a {\em product} on $\pM\times[a,0]$ if the following conditions are satisfied
\begin{enumerate}
\item $\E= E\times [a,0]$, where $E=E^+\oplus E^-$ is a $G$-equivariant bundle over $\pM$;
 \item there exists a map $\oc:T^*\pM\times\RR\to \End(E)$ such that the Clifford action on $\E$ has the form
 \[
 	c(\xi)\cdot (e,u)\ = \ \big(\, \oc(\xi)\cdot e, u\,\big),
 \]
 where $\xi\in T^*U\simeq T^*\pM\times\RR$, $e\in E$, and $u\in (-\infty,0]$;
 \item the connection $\n^\E$ is a product of a connection $\n^E$ on $E$ and the trivial connection on $\CC\times[a,0]\to [a,0]$;
 \item
 the action of $G$ on $\E= E\times [a,0]$ is trivial on the  second factor
 \[
 	\ell\cdot  (e,u)\ = \ \big(\, \ell\cdot e, u\,\big).
 \]
\end{enumerate}

If there exist $a<0$ such that $\E$  is a product on $\pM\times[a,0]$ we say that $\E$ is a  {\em product near the boundary}.
\end{Definition}

\subsection{The equivariant APS-index in  the product case}\label{SS:eq APS index}
Suppose now that the equivariant  Dirac bundle $\E$ is a product near the boundary. In particular, this implies that the formula \eqref{E:boundary form} takes a product form
\begin{equation}\label{E:D product}
	D\big|_U\ = \ c(du)\,\Big(\, \frac{\p}{\p u}\ + \ B\,\Big),
\end{equation}
where $B:C^\infty(\pM,E^\pm)\to C^\infty(\pM,E^\pm)$ is independent of $u$. The equivariant APS-index theorem of Goette \cite[Theorem~1.9]{goe} computes the index $\ind(D,B)$. The important components  of the answer are the equivariant relative Chern form and the infinitesimal eta-invariant, which we define in the next two subsections.

\subsection{The twisting equivariant curvature}\label{SS:twisting curvature}
Let $\{e_1,\ldots,e_{n}\}$ be an orthonormal frame of $TM$. Recall that we denote by $R\in\Omega^2(M,\End(TM))$ the curvature of $g^M$. The {\em twisting curvature} of $\n^\E$ is defined by the formula
\begin{equation}\label{E:twisting curvature}
 	F^{\cal{E}/\cal{S}}\ = \ F^\EE-g^M\big(Re_k,e_l\big)c(e^k)c(e^l)/4.
\end{equation}

\begin{remark}\label{R:twisting curvature}
The notation is motivated by the following computation. Suppose  $M$ is a spin-manifold and let $\calS$ be a $G$-equivariant spinor bundle over $M$, then there is a $G$-equivariant vector bundle $\W$ over $M$ and a $G$-invariant connection $\n^\W$ on $\W$ such that
\begin{equation}\label{E:E=SotimesW}
		\E\ =\ \calS\otimes \W, \quad
	\n^\E\ =\ \n^\calS\otimes1\ + \  1\otimes\n^\W,
\end{equation}
where $\n^\calS$ denote the Levi-Civita connection on $\calS$. Let $F^\W=(\n^\W)^2$ denote the curvature of $\n^\W$. It is shown on Page~121 of \cite{bgv} that $1\otimes{}F^\W= F^{\E/\calS}$.
\end{remark}

Similarly to \eqref{E:twisting curvature} we define the {\em equivariant twisting curvature} of $\E$ by
\begin{equation}\label{E:eq twisting curvature}
 	F^{\cal{E}/\calS}_\grg(X)\ =\
 	F^\EE_\grg(X)\ - \ g^M\big(R_\grg(X)e_k,e_l\big)c(e^k)c(e^l)/4.
\end{equation}

We define the {\em equivariant relative Chern character} as the cohomology class in $H_G(M)$ of the {\em equivariant relative Chern form}
\begin{equation}\label{E:twisting Chern}
	\ch_\grg(\E/\calS)(X)\ := \
	 \Str_{\E/\calS} \exp\big(-F_\grg^{\E/\calS}(X)\big),
\end{equation}
where $\Str_{\E/\calS}$ is the {\em relative supertrace} defined on page~146 of \cite{bgv}. We note that in situation of Remark~\ref{R:twisting curvature}, $\ch_\grg(\E/\calS)(X)$ is equal to the equivariant Chern form of the bundle $\W$,
\begin{equation}\label{E:chE/S=chW}
 	\ch_\grg(\E/\calS) \ = \ \ch_\grg(\W).
 \end{equation}

\subsection{Goette's equivariant infinitesimal eta invariant}\label{SS:inf eta}
The {\em equivariant infinitesimal eta invariant} of Goette is a  formal power series in $X\in \grg$ given by
\begin{equation}\label{E:inf eta}
	\eta_X(B) \ = \
	   \int_0^\infty \frac 1{\sqrt{\pi t}}
	    \mathrm{Tr}\big(\,B_{X/t}e^{-tH_{B,X/t}}\,\big)\,du,
\end{equation}
where $B_X=B-c(X)/4$ and
\[
	H_{B,\,X}\ := \ \big(\,B+c(X)/4\,\big)^2+\calL_X
\]
is the {\em equivariant Bismut Laplacian}, with $c(X)$ denoting Clifford multiplication by $X_M$,
which is well-defined after identifying the tangent and cotangent bundles using $g^M$.
One of the main theorems in \cite{goe} asserts that the difference between $\eta_X$ and
the Donnelly's equivariant eta invariant $\eta_{e^{-X}}(B)$ is given by an explicit
locally computable integral on $\pM$.

\subsection{The equivariant APS-index theorem in the product case}\label{SS:eq APS product}
The following equivariant version of the Atiyah-Patodi-Singer index theorem is due to S.~Goette \cite[Theorem~1.17]{goe}.

\begin{theorem}\label{T:eq APS product}
Suppose that the equivariant  Dirac bundle $\E=\E^+\oplus\E^-$ is a product near the boundary.  Let $X\in\grg$ be a sufficiently close to zero and assume that  the vector field $X_M$ does not vanish anywhere on $\pM$. Then
\begin{equation}\label{E:eq APS produc}
	\ind(D,B) \ = \
		(2\pi i)^{-n/2}\int_M
		\hat{A}_\grg(g^M)(X)\cdot \mathrm{ch}_\grg(\cal{E}/\calS)(X)
		\ - \
		\frac{\eta_X(B)\ + \ h_{e^{-X}}(B)}2,
\end{equation}
where $h_{e^{-X}}(B)=\mathrm{Tr}\big(e^{-X}\big|_{\ker B }\big)$ and $n=\dim M$.
\end{theorem}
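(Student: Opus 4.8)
The statement in question is Theorem~\ref{T:eq APS product}, which is quoted from Goette \cite[Theorem~1.17]{goe}, so strictly speaking the ``proof'' here is a matter of recording the argument of \cite{goe} in the notation of this paper; I sketch the strategy one would follow to reconstruct it. The plan is to realize $\ind(D,B)$ as the supertrace of the heat semigroup twisted by the group element, and to deform it to $X=0$ along the one-parameter family $e^{-X/t}$, tracking the defect contributed by the boundary. First I would use the McKean--Singer formula for the APS boundary value problem in the equivariant setting: since $\E$ is a product near $\pM$, the heat operators $e^{-t D^-D^+}$ and $e^{-t D^+ D^-}$ with generalized APS boundary conditions are trace class and $\ind(D,B) = \Str\big(\ell\, e^{-t D^2}\big)$ for all $t>0$, where $\ell = e^{-X}$. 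Taking $t\to 0$ localizes the bulk part of the supertrace to a fixed-point integral, and taking $t\to\infty$ produces the kernel term $h_{e^{-X}}(B)$; the difference of these two limits, together with the boundary contribution coming from the non-compactness of the cylinder $\pM\times(-\infty,0]$, is what must be identified.

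The key steps, in order, are: (i) Reduce to the ``delocalized'' Berline--Vergne situation by noting that for $X$ small and $X_M$ nonvanishing on $\pM$, the relevant fixed-point set $M^X$ (the zero set of $X_M$) is contained in the interior of $M$, so the bulk heat-kernel supertrace localizes, as $t\to0$, to $(2\pi i)^{-n/2}\int_{M}\hat A_\grg(g^M)(X)\,\ch_\grg(\E/\calS)(X)$ by the equivariant local index theorem of \cite{BerlineVergne85,bgv} applied to the Bismut-type deformation. (ii) Analyze the transfer between $t=0$ and $t=\infty$ on the product cylinder: conjugating $D$ by the Clifford variables and separating the normal variable $u$, the boundary defect is governed by the half-line problem $\p_u + B$ with APS conditions, whose equivariant heat supertrace, integrated in $t$, produces exactly $\tfrac12\big(\eta_X(B) + h_{e^{-X}}(B)\big)$; here one uses the definition \eqref{E:inf eta} of $\eta_X(B)$ and the fact that the Bismut Laplacian $H_{B,X}$ is the correct operator governing the equivariant spectral asymmetry. (iii) Combine (i) and (ii) via the $t$-independence of $\Str(\ell\,e^{-tD^2})$ to get \eqref{E:eq APS produc}; convergence of the $u$-integral in \eqref{E:inf eta} at $t\to0$ and $t\to\infty$ must be checked, the former using that $B_X$ has no small eigenvalues uniformly because $X_M\ne0$ on $\pM$ (which makes $c(X)/4$ invertible in the relevant directions), the latter by the exponential decay coming from the gap in the spectrum of $H_{B,X}$ away from its kernel.

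The main obstacle is step~(ii): establishing that the boundary term in the equivariant McKean--Singer identity converges and equals precisely $\tfrac12(\eta_X(B)+h_{e^{-X}}(B))$ rather than some other regularization. Concretely, one must control the small-$t$ asymptotics of $\Tr(B_{X/t}\,e^{-tH_{B,X/t}})$ uniformly in the collar, and show that the potentially divergent local terms cancel against the interior fixed-point expansion — this is where Goette's infinitesimal $\eta$-invariant, as opposed to the naive integral, is forced on us, and where the hypothesis $X_M|_{\pM}\neq 0$ is essential. A secondary technical point is justifying the interchange of the $t\to0$ limit with the (improper) $u$-integration over the half-line, which requires Duhamel-type estimates on the difference between the heat kernel of the model cylindrical operator and that of $D$ near $\pM$; since $\E$ is product near the boundary this difference is supported away from $\pM$ and decays, so the estimate is standard but must be stated. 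Once these analytic points are in place, the algebraic identification of the localized bulk integrand with $\hat A_\grg(g^M)(X)\,\ch_\grg(\E/\calS)(X)$ is a direct application of the Mehler-formula computation recalled in Section~\ref{S:eq char class}.
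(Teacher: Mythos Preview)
Your assessment is correct: the paper does not supply its own proof of Theorem~\ref{T:eq APS product} but simply quotes it as \cite[Theorem~1.17]{goe}, and you have recognized this. The sketch you give of Goette's argument---equivariant McKean--Singer, localization of the bulk contribution via the Berline--Vergne/Bismut machinery, and identification of the boundary defect on the product cylinder with the infinitesimal $\eta$-invariant---is a faithful outline of the strategy in \cite{goe}, so there is nothing to correct.
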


\section{A deformation of a Dirac bundle}\label{S:deformation}

The main goal of this paper is to compute the equivariant index of a Dirac operator with generalized Atiyah-Patodi-Singer (APS) boundary conditions in the situation when the metric $g^M$ is not a product near the boundary. For the non-equivariant case this computation was done by Gilkey \cite{gil,gil2} by extending the metric past the boundary to
one that becomes product near a new boundary, and studying the behavior of the different components of the APS index formula under this extension.

In this section we introduce a deformation of an equivariant Dirac bundle to
one which is a product near the boundary. It follows from the stability of the index
that the indices of the Dirac operators associated to the original and the deformed Dirac bundles coincide. In Section~\ref{S:ind-form} we use this deformation to give an explicit formula for the equivariant index in the non-product situation, by  adopting Gilkey's  argument \cite{gil,gil2} to the equivariant case.

\subsection{Deformation of the metric}\label{SS:deformation of the metric}

We use the notation of Subsection~\ref{SS:eqcollar}. In particular, we identify a $G$-equivariant collar $U$ of the boundary of $M$ with the product $U= \pM\times(-\infty,0]$. In this subsection we construct a deformation of $g^M$ to a metric $g^M_0$ which is a product near the boundary.

Let $\gpM$ denote the restriction of $g^M$ to $\pM$ and let $g^{U}$ denote the metric on $U = \pM\times(-\infty,0]$ defined as the product of $\gpM$ and the standard metric on $(-\infty,0]$.

Let $s:\RR\to \RR$ be a smooth non-decreasing function such that
\[
	s(u)\ =\ \begin{cases}
					1, \quad&\text{for all} \ \ u\, \le \, -1;\\
					0, \quad&\text{for all} \ \ u\, \ge \, -2/3.
			\end{cases}
\]
For $0\le{}t\le1$ we set
\[
	s_t(u)\ = \ s(u-t).
\]

Consider a family of  Riemannian metrics $\gM_t$ on $M$,  whose restriction to $M\backslash{U}$ is equal to $\gM$ and whose restriction to the cylinder $U$ is given by
\[
	\gM_t\ := \
	s_t\,\gM \ + \ (1-s_t)\, g^{U}.
\]

\subsection{Properties of $\gM_t$}\label{SS:properties gMa}

We note the following properties of the family $\gM_t$:
\begin{enumerate}
\item
the function $t\mapsto \gM_t$ is continuous in the $C^1$-topology. In particular, if we denote by $\n^{LC}_t$ the Levi-Civita connection associated to $\gM_t$ then the function $t\mapsto \n^{LC}_t-\n^{LC}_0$ is continuous in the $C^0$-topology;
\item
for each $0\le{}t\le1$, the metric $\gM_t$ is invariant with respect to the $G$-action;
\item
for $t=1$,  $g^M_t=g_1^M= g^M$; for $t< 1$ the restriction of $g^M_t$ to
\[
	(M\backslash U)\cup\big(\pM\times(-\infty, -1+t)\big),
\]
coincides with $\gM$;
\item
for $t< 2/3$, the restriction of $\gM_t$ to $\pM\times[-\frac23+t,0]$ is the product of $\gpM$ and the standard metric on $[-\frac23+t,0]$. In particular, if we use the natural identification of the fibers $\pM\times\{u\}$ with $\pM$ then the restriction of $\gM_t$ to $\pM\times\{u\}$ is equal to $\gpM$ for all $u\in [-\frac23+t,0]$.
\item
the restriction of $\gM_t$ to the boundary $\pM$ does not depend on $t$ and is $g^\pM$.
\end{enumerate}

\begin{Definition}\label{D:admissible gMt}
A family of Riemannian metrics $g^M_t$ on $M$ is called an {\em admissible deformation of  $g^M$} if it satisfies the above properties (i)-(v).
\end{Definition}

\subsection{Deformation of the Dirac bundle}\label{SS:deformation Dirac budnle}
Next we  construct a family of  Dirac bundle structures on $\E$, parametrized by $ t\in [0,1]$, such that for each $t$ this structure is compatible with the metric $\gM_t$, for $t=1$ it coincides with the original structure, and for $t=0$ it is a product near the boundary.

\begin{Definition}\label{D:family Dirac bundles}
A triple $(\E,c_t,\n^\E_t)$ $(0\le t\le1)$ is called a {\em family of Dirac bundles} if
\begin{enumerate}
\item
for $0\le t\le1$,
\begin{equation}\label{E:c2=ga}
	\gM_t(\xi,\xi) \ := \ -c_t(\xi)^2
\end{equation}
is a Riemannian metric on $T^*M$. Then $c_t$ is a Clifford action compatible with $g^M_t$;
\item
$\n^\E_t$ is a Clifford connection on $\E$ compatible with $c_t$ in the sense that (cf. \eqref{E:Clifford connection})
\[
	 	\n^\E_{t,v} \big(\,c(\xi)\cdot s\,\big)\ = \
 	c_t(\n^{LC}_{t,v}\xi\,\big)\cdot s \ + \
 	c_t(\xi)\cdot\n^\E_{t,v}s,
\]
where $\n^{LC}_t$ is the Levi-Civita connection of the metric $g^M_t$.
\end{enumerate}
In this situation we say that the family $(\E,c_t,\n^\E_t)$ $(0\le t\le1)$ is compatible with the family of metrics $g^M_t$.
\end{Definition}

\begin{Definition}\label{D:boundary connection}
We say that a connection $\tiln^E=\tiln^{E^+}\oplus\tiln^{E^-}$ on  $E=\E\big|_\pM$ is {\em a boundary connection compatible with the restriction of the Clifford action $c$ to the boundary} if for all $v\in T\pM,\ \xi\in C^\infty(M,T^*M\big|_\pM),\ s\in C^\infty(\pM,E)$, we have
\[
	 	\tiln^E_v \big(\,c(\xi)\cdot s\,\big)\ = \
 	c(\n^{\pM,LC}_v\xi\,\big)\cdot s \ + \
 	c(\xi)\cdot\tiln^E_vs,
\]
where $\n^{\pM,LC}$ denote the Levi-Civita connection on $\pM$ defined by the metric $g^\pM= g^M\big|_\pM$ (note that in general $\n^{\pM,LC}$ is not equal to the restriction of $\n^{LC}$ to the boundary).
\end{Definition}

\begin{Proposition}\label{P:deformation Dirac}
Given a $G$-invariant connection $\tiln^E$ on $E$ compatible with the restriction of $c$ to $\pM$ and an admissible deformation $g^M_t$ of $g^M$, there exists a family of $G$-equivariant Dirac bundles $(\E,c_t,\n^\E_t)$ \/ $(0\le{}t\le1)$, where $c_t:T^*M\to \End(\E)$ is a Clifford action compatible with the metric $\gM_t$ and $\n^\E_t$ is a Clifford connection on $(\E,c_t)$, such that

\begin{enumerate}
\item
for  $t=1$ the Dirac bundle $(\E,c_t,\n^\E_t)$ coincides with the original Dirac bundle $(\E,c,\n^\E)$;
\item
$c_t(\xi)= c(\xi)$ for every $\xi\in T^*M\big|_\pM$. In other words, the restriction of the Clifford module structure to the boundary is independent of $t$;
\item
the Dirac bundle $(\E,c_0,\n^\E_0)$ is a product near the boundary and the restriction of $\n^\E_0$ to the boundary is equal to $\tiln^E$;
\item
the families $c_t$ and $\n^\E_t$ are continuous in the $C^0$-topology for $0\le{}t\le1$;
\item
the restrictions of $c_t$ and  $\n^\E_t$ to $\big(M\backslash U\big)\sqcup \big(\,\pM\times (-\infty,-3]\,\big)$ are independent of \/ $0\le{}t\le1$ and equal to $c$ and $\n^\E$ respectively.
\end{enumerate}
\end{Proposition}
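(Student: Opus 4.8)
The plan is to build the family $(\E,c_t,\n^\E_t)$ by separately interpolating the Clifford action and the connection, patching a genuinely product structure near the boundary to the original structure away from it. First I would choose a $G$-invariant identification of the fibres of $\E$ along the collar rays $\{y\}\times(-\infty,0]$ using the original connection $\n^\E$, as in Subsection~\ref{SS:product on E}, so that $\E\big|_U\cong E\times(-\infty,0]$ with $E=\E\big|_\pM$. On the product side, the metric $g^M_0$ is already a product on $\pM\times[-2/3,0]$ by property~(iv) of an admissible deformation, and $T^*_{(y,u)}U$ splits $G$-equivariantly as $T^*_y\pM\oplus\RR\,du$; I would define the \emph{product} Clifford action $\oc$ on $E\times(-\infty,0]$ by transporting the boundary action $c\big|_\pM$ along the rays (so $\oc(\xi)$ is $u$-independent) and setting $\oc(du)=c(du)\big|_\pM$. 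This $\oc$ is compatible with $g^M_0$ near the boundary. For the connection on the product side I would take the product of $\tiln^E$ on $E$ with the trivial connection in the $u$-direction; Definition~\ref{D:boundary connection} guarantees exactly that this is a Clifford connection for $\oc$ relative to the product metric.

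Next I would interpolate. For the Clifford action, note that the space of Clifford actions compatible with a \emph{varying} metric on $T^*M$ is not linear, so a naive convex combination does not work; instead I would use the standard fact that Clifford actions for nearby metrics are conjugate by a bundle automorphism (a ``Clifford connection transport''). Concretely, writing $g^M_t$ in terms of $g^M_0$ via a positive-definite symmetric endomorphism $a_t$ of $T^*M$ with $a_0=\Id$, one transports a fixed $g^M_0$-compatible action to a $g^M_t$-compatible one by $c_t(\xi):=\oc\big(a_t^{-1/2}\xi\big)$ suitably corrected; carrying this out $G$-equivariantly and cutting off with the functions $s_t$ from Subsection~\ref{SS:deformation of the metric} so that $c_t$ agrees with $c$ on $(M\setminus U)\cup(\pM\times(-\infty,-3])$ and with $\oc$ near $\pM$ gives properties~(i),(ii),(iv),(v) for $c_t$. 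For the connection, once $c_t$ is fixed, the set of Clifford connections for $(\E,c_t)$ is an affine space over $\Omega^1(M,\End^-(\E))$ (endomorphisms skew-commuting appropriately), hence convex; I would start from \emph{some} Clifford connection for $c_t$ depending continuously on $t$ (obtained, e.g., by averaging the Levi-Civita--induced connection against $h^\E$) and then add a $t$-dependent, $s_t$-cutoff one-form correction term so that near $\pM$ the result equals the product connection $\tiln^E\oplus d_u$ and on $M\setminus U$ it equals $\n^\E$, averaging over $G$ to keep everything invariant. Continuity in the $C^0$-topology follows from the $C^1$-continuity of $t\mapsto g^M_t$ recorded in property~(i) of Subsection~\ref{SS:properties gMa}.

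The main obstacle, and the step requiring genuine care, is the simultaneous compatibility in~(ii) and~(iii): $c_t$ must be \emph{exactly} $c$ on $T^*M\big|_\pM$ for all $t$ (not merely up to conjugation), while $\n^\E_0$ must restrict to the prescribed $\tiln^E$ on the boundary and the whole package $(\E,c_0,\n^\E_0)$ must be a product on some $\pM\times[a,0]$ in the strong sense of Definition~\ref{D:product Dirac} — including that the $G$-action be trivial in the $u$-direction, which is why the $G$-equivariant collar of Subsection~\ref{SS:eqcollar} and the identification~\eqref{E:action on E} are needed. Since $g^M_t\big|_\pM=g^\pM$ is independent of $t$ by property~(v), the endomorphism $a_t$ restricts to the identity on $T^*M\big|_\pM$, so the conjugation is trivial there and $c_t\big|_\pM=c\big|_\pM$ automatically; this is the observation that makes~(ii) consistent with~(i). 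Matching the boundary value of $\n^\E_0$ to $\tiln^E$ is then arranged by the choice of the product connection above, and the cutoff $s_t$ ensures the interpolation does not disturb it. Finally, the stated continuity and the support conditions~(iv),(v) are immediate from the explicit cutoffs, and $G$-invariance throughout is preserved by averaging, completing the construction.
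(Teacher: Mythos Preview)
Your overall plan—separate constructions for the Clifford action and the connection, glued via cutoffs—matches the paper's, and your treatment of the connection is close to what the paper does. The genuine gap is in the Clifford action.

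You correctly note that convex combinations of Clifford actions are not Clifford actions, and propose to transport the product action $\oc$ (compatible with $g^M_0$) to a $g^M_t$-compatible action via $c_t(\xi)=\oc(a_t^{\pm1/2}\xi)$. This does give a $g^M_t$-Clifford action on the collar, and since $a_t|_\pM=\Id$ it even satisfies~(ii). But it does \emph{not} give $c_1=c$: for $t=1$ your formula produces $\oc(a_1^{\pm1/2}\xi)$, which is some $g^M$-Clifford action built out of the product action $\oc$, not the original $c$. Indeed, on $\pM\times(-\infty,-1)$ all the $g^M_t$ coincide, so $a_t=\Id$ there and your $c_t$ equals $\oc$; but $\oc$ (constant in $u$ by your construction) has no reason to equal $c$ there. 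You then say you will ``cut off with $s_t$'' so that $c_t=c$ away from the boundary—but this is precisely the forbidden convex combination you just ruled out. The phrase ``suitably corrected'' hides the real difficulty: you now have two distinct Clifford actions for the \emph{same} metric $g^M$ on an annular region and must interpolate between them, which cannot be done by a pullback on $T^*M$; it would require a family of bundle automorphisms of $\E$ conjugating one to the other, constructed $G$-equivariantly and continuously, and you give no mechanism for this. (You even mention ``conjugate by a bundle automorphism'' as the relevant fact, but your concrete formula is a pullback on $T^*M$, which is a different thing.)

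The paper sidesteps the problem by never introducing a separate product action. It defines $c_t$ on the collar by $\oc_{t,u}(\xi):=\oc_0\big(\Phi^{u,0}_{\phi(t,u);y}(\xi)\big)$, where $\oc_0=c|_{u=0}$ is the original boundary action and $\Phi^{u,0}_{\tau;y}$ is Levi--Civita parallel transport for $g^M_\tau$ along the ray; the interpolation happens in the parameter $\tau=\phi(t,u):=t^{r(u)}$, with $r$ a cutoff equal to $1$ near the boundary and $0$ for $u\le -2$. The point is that for $\tau=1$ (i.e.\ $t=1$ or $u\le-2$) the Clifford compatibility \eqref{E:Clifford connection} of $\n^\E$ with $c$ makes this parallel transport recover $c$ exactly (their Lemma~\ref{L:ct(Phi)}), while for $t=0$ near the boundary the product structure of $g^M_0$ makes $\Phi^{u,0}_{0;y}$ trivial, yielding a product action. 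Thus both endpoints (i) and (iii) come for free, with no gluing of inequivalent Clifford structures. Your endomorphism $a_t^{1/2}$ would work if you took $c$ rather than $\oc$ as the reference—then (i), (ii), (v) follow—but then (iii) fails instead; the parallel-transport trick is what lets the paper have both.
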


The existence of such a deformation is well known and often used in the literature without a proof. However the construction of such a deformation is not  totally trivial, since the Clifford relation \eqref{E:c2=ga} is non-linear. In particular, one can not first construct a non-equivariant version of the family of Clifford actions and then average it over $G$ to obtain an equivariant version.  Since we did not find a good reference for a construction of a family of Dirac structures we present it in full detail in the Appendix.

\begin{Definition}\label{D:admissible deformation}
An {\em admissible deformation} of the Dirac bundle $(\E,c,\n^\E)$, is a family of Dirac bundles $(\E,c_t,\n^\E_t)$ $(0\le{}t\le1)$ which satisfies conditions (i)-(iv) (but does not necessarily satisfy condition (v)) of Proposition~\ref{P:deformation Dirac} with $\tiln^E_0:= \n^\E_0\big|_\pM$.
\end{Definition}

\begin{remark}\label{R:no average}
Suppose $M$ is a spin-manifold, $\E= \calS$ is a spinor bundle over $M$, and $\tiln^E$ is a Levi-Civita connection on $S:=\calS\big|_\pM$ defined by the metric $g^{\pM}:= g^M\big|_\pM$. We note that in general $\tiln^E$ is not equal to the restriction of the Levi-Civita connection on $\calS$ to $\pM$.
In this situation the family of natural Dirac bundle structures on $\calS$ induced by the metrics $\gM_t$ satisfies the conditions of the proposition. More generally, if $\E= \calS\otimes{}W$ is a twisted spinor bundle, there is  a family of Dirac bundle structures on $\E$ induced by the family of Dirac bundle structures on $\calS$. One of the ways to construct a family of Dirac bundle structures in the general
case is to use a covering of $M$ by open contractable neighborhoods $M=\bigcup U_j$ such that $\E\big|_{U_j}\simeq \calS_j\otimes{}W_j$, construct a family of Dirac bundle structures on each $\E\big|_{U_j}$ and then glue them together using a partition of unity. However, it is not easy to obtain a family of {\em equivariant} Dirac bundle structures in this way. Because of this, in Appendix~\ref{S:pr of deformation} we use a completely different construction.
\end{remark}
\subsection{Stability of the index}\label{SS:stability}
Let $(\E,c_t,\n^\E_t)$ be an admissible deformation of $(\E,c,\n^E)$ and let $D_t$ \/ ($t\le 1$) denote the Dirac operator associated to the equivariant Dirac bundle $(\E,c_t,\n^\E_t)$. We view $D_t$ as a bounded operator from the Sobolev space $\mathcal{H}^1(M,\E)$ to the Sobolev space $\mathcal{H}^0(M,\E)$. Then $D_t$ depends continuously on $t$. The operator $D_{1}$ coincides with the original Dirac operator $D$, while $D_0$ is a product near the boundary.

For $u\in (-\infty,0]$, $y\in \pM$, $\xi\in T^*_{(y,u)}M$, $0\le{}t\le1$, we denote by $\oc_{t,u}(\xi):E_y\to E_y$ the linear map such that
\begin{equation}\label{E:octu}
		c_t(\xi)\cdot(e,u)\ = \ \big(\,\oc_{t,u}(\xi)\cdot e,u\,\big).
\end{equation}
Then, as in Subsection ~\ref{SS:boundary form}, we obtain
\begin{equation}\label{E:Da|U}
	D_t\big|_{U}\ = \ c_t(du)\,\left(\, \frac{\p}{\p u}+B_t(u)\,\right),
\end{equation}
where
\begin{equation}\label{E:B_t(u)}
	B_t(u)\ = \ -\sum_{j=1}^{n-1}\,
	\oc_{t,u}(du)\cdot \oc_{t,u}(dy^j)\,\n^E_{t,u,\frac{\p}{\p y^j}},
\end{equation}
and $\n^E_{t,u}:= \n^\E_t\big|_{\pM\times\{u\}}$ is the restriction of the connection $\n^\E_t$ to $\pM\times\{u\}\subset M$.

Let $\Phi_t(u):E^+\to E^+$ ($0\le{}t\le1,\ u\le0$) be a continuous family of bundle maps and set
\[
	A_t(u)\ := \ B_t(u) \ - \ \Phi_t(u).
\]
The stability of the equivariant index, \cite[\S{}III.9]{lami}, implies the following
\begin{Theorem}\label{T:stability of the index}
Suppose $\Phi_t$ is chosen such that $A_\pM:= A_t(0)$ is independent of $0\le{}t\le1$. Then the equivariant index $\indl(D_t,A_\pM)$ is independent of $t$. In particular,
\begin{equation}\label{E:stability of the index}
	\indl\big(D,A_\pM\big)\ = \ \indl\big(D_0,A_\pM\big), \qquad \ell\in G.
\end{equation}
\end{Theorem}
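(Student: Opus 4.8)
The plan is to deduce Theorem~\ref{T:stability of the index} from a continuity/stability principle for the index of Fredholm families of first-order elliptic boundary value problems with generalized APS conditions. The key point is that, although the Dirac structure $(\E,c_t,\n^\E_t)$ varies with $t$, by hypothesis the boundary operator $A_\pM = A_t(0)$ — hence the spectral projection $P_{A_\pM}$ and the boundary space $V^+_{A_\pM}$ — is \emph{fixed}, independent of $t$. Thus we are looking at a single fixed domain $V^+_{A_\pM}\subset \mathcal{H}^1(M,\E^+)$ and a norm-continuous path $t\mapsto D_t^+$ of bounded operators from $\mathcal{H}^1(M,\E^+)$ to $\mathcal{H}^0(M,\E^-)$, each of which is Fredholm on this domain by \cite[Section~2]{grb} (the Fredholm property uses only $A_\pM$ and the symbol of $D_t$, and the latter varies continuously).

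First I would record the continuity: by Proposition~\ref{P:deformation Dirac}(iv) the families $c_t$ and $\n^\E_t$ are $C^0$-continuous in $t$, so the coefficients of $D_t$ in any local chart vary continuously, whence $D_t\colon \mathcal{H}^1(M,\E)\to\mathcal{H}^0(M,\E)$ is norm-continuous in $t$ (this is already noted in Subsection~\ref{SS:stability}). Restricting to $V^+_{A_\pM}$, we get a norm-continuous path of bounded Fredholm operators $D_t^+\colon V^+_{A_\pM}\to \mathcal{H}^0(M,\E^-)$. By the standard stability of the Fredholm index under norm-continuous deformations — and, crucially, its $G$-equivariant refinement, since every $D_t^+$ is $G$-invariant and the domain $V^+_{A_\pM}$ is a $G$-invariant subspace — the class of $[\ker D_t^+] - [\coker D_t^+]$ in the representation ring $R(G)$ is locally constant in $t$, hence constant on $[0,1]$. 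Evaluating characters gives $\indl(D_t,A_\pM) = \indl(D_0,A_\pM)$ for all $\ell\in G$ and all $t$; taking $t=1$ yields \eqref{E:stability of the index}. This is exactly the content invoked by the citation to \cite[\S III.9]{lami}.

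There is one technical wrinkle to address carefully: the boundary space $V^+_A$ as defined in Subsection~\ref{SS:APS} via the sharp spectral projection $P_A$ (eigenvalues $\ge 0$) is only a closed subspace of the right Sobolev space after the usual correction at the boundary — the natural domain is cut out in $\mathcal{H}^1$ by the condition $P_A(s|_{\pM})=0$ on the trace, and one must invoke the trace theorem together with the ellipticity of the boundary value problem to see that $D_t^+$ with this domain is a bounded Fredholm operator in a way that varies continuously with $t$. Since $A_\pM$ is $t$-independent this subspace is literally the same for all $t$, so once the framework of \cite{grb} is set up for one operator it applies uniformly, and the continuity of $t\mapsto D_t^+$ as maps out of this fixed space is immediate from the $C^0$-continuity of the coefficients.

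The main obstacle is therefore not conceptual but bookkeeping: ensuring that the generalized APS setup of \cite{grb} genuinely produces a \emph{norm-continuous} family of Fredholm operators on a \emph{fixed} Banach space, rather than merely a family of Fredholm problems with $t$-dependent domains. The deformation was engineered precisely so that $A_\pM$ is constant (this is the hypothesis on $\Phi_t$), which sidesteps the thorny issue of comparing indices across different boundary conditions — recall Remark~\ref{R:need of P}, where it is noted that the index genuinely depends on the choice of $A$. Given that, the equivariant stability statement \cite[\S III.9]{lami} applies verbatim, and the proof is short.
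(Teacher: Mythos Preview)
Your proposal is correct and takes essentially the same approach as the paper: the paper does not give a detailed proof at all but simply states that the result follows from the stability of the equivariant index, citing \cite[\S III.9]{lami}. Your write-up supplies exactly the argument behind that citation --- continuity of $t\mapsto D_t$ from the $C^0$-continuity of $(c_t,\n^\E_t)$, the fixed domain $V^+_{A_\pM}$ coming from the $t$-independence of $A_\pM$, and the equivariant Fredholm stability --- so it is a faithful (and more explicit) version of what the paper intends.
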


If $D_0$ is a product near the boundary and $B_0(u)= B_0(0)= A_\pM$, then  we can apply Theorem~\ref{T:eq APS product} to compute $\ind(D,A_\pM)$.

\section{The equivariant index formula in the general case}\label{S:ind-form}
\setcounter{equation}{0}

In this section we extend Goette's equivariant APS-index theorem~\ref{T:eq APS product} to a setting
where neither the metric nor the Dirac bundle structure are product near the boundary.

\subsection{Equivariant transgression forms}\lb{eq-trans}
Recall the construction of an equivariant transgression form \cite{bgv}. Let $\n_t^{\EE}$ $(0\le t\le1$) be a family of $G$-invariant connections
on $\EE$. Let $\n^\E_{t,\grg}$ denote the corresponding equivariant connection, cf. \eqref{E:eq connection}, and let $F_{t,\grg}^{\E}$  denote the equivariant curvature of the connection $\n_{t,\grg}^{\E}$, cf. \eqref{E:eq curvature}.

Let $\beta_\grg(\n_t^{\E})(X)$ be the equivariant characteristic form associated to a germ $f(z)$ of an analytic function near zero, cf. Subsections~\ref{SS:eq char classes} and \ref{SS:twisting curvature}. In our applications $\beta_\grg$ will be
either the equivariant $\hat{A}$-genus \eqref{E:A genus}, or the  equivariant $\hat{L}$-from \eqref{E:L form}.

Then, \cite[Therem~7.7]{bgv}, the class of $\beta_\grg(\n_t^{\E})$ in $H_G(M)$ is independent of $t$ and
\begin{equation}\label{E:transgression1}
	\beta_\grg(\n_1^{\E}) \ - \ \beta_\grg(\n_0^{\E}) \ = \
	d_\grg T\beta_\grg(\n_1^{\E},\n_0^{\E}),
\end{equation}
where $T\beta_\grg(\n_1^{\E},\n_0^{\E})$ is the {\em equivariant transgression form}  for the characteristic class $\beta_\grg$ and  the pair of connections $\n^\E_0,\ \n^\E_1$ . If $\beta_\grg(\n_t^{\E})(X)= \Str f(F^\E_{t,\grg}(X))$, then (cf. the proof of Theorem~7.7 in \cite{bgv})
\begin{equation}\label{E:transgression2}
	T\beta_\grg(\n_1^{\E},\n_0^{\E})(X)\ = \ \int_0^1\Str\left[
		\frac{d\n^{\EE}_{t,\grg}}{dt}\,f'\big(F^{\EE}_{t,\grg}(X)\big)
												\right]\,dt.
\end{equation}
If $\beta_\grg(\n_t^{\E})(X)= \exp\Big(\Str f\big(F^\E_{t,\grg}(X)\big)\Big)$, then
\begin{equation}\label{E:transgression}
	T\beta_\grg(\n_1^{\E},\n_0^{\E})(X)\ = \
	  \int_0^1\beta_\grg(\n_t^{\E})(X)\wedge \Str\left[
		\frac{d\n_{t,\grg}^{\EE}}{dt}\,f'\big(F^{\EE}_{t,\grg}(X)\big)
												\right]\,dt.
\end{equation}

\begin{remark}\label{R:transgression depends}
Note that the notation $T\beta_\grg(\n_1^{\E},\n_0^{\E})$ for the transgression form is slightly misleading, since this form depends not only on the connections $\n_1^{\E}$ and $\n_0^{\E}$ but on the whole family $\n^\E_t$. Though the class of $T\beta_\grg(\n_1^{\E},\n_0^{\E})\in \Omega_G(\pM)/d_\grg\Omega_G(\pM)$ depends only on  $\n_1^{\E}$ and $\n_0^{\E}$, for explicit computations in Sections~\ref{S:signature 4D} and \ref{S:examples} we need to keep track of the actual differential form $T\beta_\grg(\n_1^{\E},\n_0^{\E})$ and not only of its class in $\Omega_G(\pM)/d_\grg\Omega_G(\pM)$. Since, by \eqref{E:transgression1}, $dT\beta_\grg(\n_1^{\E},\n_0^{\E})$ depends only on $\n_1^{\E}$ and $\n_0^{\E}$, omitting the family  $\n^\E_t$ from the notation is unlikely to lead to confusion.
\end{remark}

When the bundle $\E$ is the tangent bundle and the connections $\n_1^{\E}$ and $\n_0^{\E}$ are the Levi-Civita ones for the metrics $g_1^M,\ g_0^M$, respectively, we may write the transgression as $T\beta_\grg(g_1^M,g_0^M)(X)$. Here we do not assume that $\n^\E_t$ is a Levi-Civita connection of some metric for $t\not=0,1$.

\subsection{Transgression for the equivariant relative Chern form}\label{SS:transgressionChern}
The equivariant relative Chern form $\ch_\grg(\E/\calS)$, \eqref{E:twisting Chern}, is not a characteristic form of $\E$ in the sense of Subsection~\ref{SS:eq char classes}. In particular, it depends not only on the connection $\n^\E$ but also on the Clifford action and the Riemannian metric, cf. \eqref{E:twisting curvature}. If only the Clifford connection is changing and the Clifford action remains unchanged,  then the construction of the of the transgression form presented in the previous subsection works without any changes. However, if the Clifford action is changing this construction does not apply.  In this subsection we present a construction of a transgression form for $\ch_\grg(\E/\calS)$. Our strategy is to consider a good open cover $\{U_i\}$ of $M$ and construct a family of local transgression forms on each $U_i$. Then we show that those forms can be chosen in such a way that they coincide on the intersections and, hence, define a global equivariant transgression form on $M$.

Let $g^M_t$ be a family of Riemannian metrics on $M$. Let $c_t:T^*M\to \End(\E)$ be a family of Clifford actions compatible
with $g^M_t$ and $\n^E_t$ be a family of $G$-invariant connections compatible with $c_t$ in the sense of \eqref{E:Clifford
connection}. Let $F^\E_{t,\grg}$ be the equivariant curvature of $\n^\E_t$ and let $\ch_{t,\grg} (\E/\calS)$ be the relative
Chern form defined by $g^M_t$, $c_t$, and  $\n^\E_t$.

\subsubsection{The case of a spin-manifold}\label{SS:case of spin}
Consider first the situation, when $M$ is  a spin-manifold endowed with a bundle of spinors $\calS$, as in Remark~\ref{R:twisting curvature}.  Let $\n^\calS_t$ be the Levi-Civita connection on $\calS$ defined by the family of metrics $g^M_t$. Then, as in Remark~\ref{R:twisting curvature}, there exist a bundle $\W$ over $M$ and a family of connections $\n^\W_t$ such that $\E= \calS\otimes \W$ and
\[
	\n^\E_t \ = \ \n^\calS_t\otimes1\ +\ 1\otimes \n^\W_t.
\]
By \eqref{E:chE/S=chW},  $\ch_{t,\grg}(\E/\calS) = \ch_\grg(\n^\W_t)$. Let $\Tch_\grg(\n^\W_1,\n^\W_0)$ denote the transgression form for $\ch_\grg(\n^\W_t)$. Then $\Tch_\grg(\n^\W_1,\n^\W_0)$ is also a transgression form for $\ch_\grg(\E/\calS)$ in the sense that
\[
	d\Tch_\grg(\n^\W_1,\n^\W_0) \ = \
	\ch_{1,\grg}(\E/\calS)\ - \ \ch_{0,\grg}(\E/\calS).
\]

\subsubsection{Local coordinates}\label{SS:local coordinates}
If $M$ is not a spin-manifold, let $\{U_1,\ldots,U_N\}$ be a cover of $M$ by open contractable sets. Let $\calS_i\to U_i$ ($i=1,\ldots,N$) be a bundle of spinors over $U_i$ and let $\n^{\calS_i}_t$ denote the Levi-Civita connection on $\calS_i$ defined by the metric $g^M_t\big|_\pM$. For each pair $i,j\in \{1,\ldots,N\}$ we fix an isomorphism
\begin{equation}\label{E:phiij}
		\phi_{ij}:\, \calS_j\big|_{U_i\cap U_j}\ \to \ \calS_i\big|_{U_i\cap U_j},
\end{equation}
which commutes with the Levi-Civita connection and the Clifford action. Note that, in general, $\phi_{ij}$ do not satisfy the cocycle condition, i.e., $\phi_{ij}\circ\phi_{jk}\not= \phi_{ik}$.

Let $\E_i= \E\big|_{U_i}$  denote the restriction of $\E$ to $U_i$ and let $\n^{\E_i}_t$ denote the restriction of $\n^\E_t$ to $\E_i$. We fix isomorphisms
\begin{equation}\label{E:psii}
		\psi_{i}:\,\calS_i\otimes\CC^k\ \to \ \E_i, \qquad i=1,\ldots,N.
\end{equation}
Let $\psi_i^*\n^{\E_i}_t:= \psi_i^{-1}\circ\n^{\E_i}_t\circ\psi_i$ denote the pull-back of the connection $\n^{\E_i}_t$ to $\calS_i\otimes\CC^k$. Then
\begin{equation}\label{E:psiinabla}
		\psi_i^*\n^{\E_i}_t \ = \
			\n^{\calS_i}_t\otimes1\ +\ 1\otimes \n^{\CC^k}_{i,t},
\end{equation}
where $\n^{\calS_i}_t$ is the Levi-Civita connection on $\calS_i$ associated to the metric $g^M_t$ and
\[
	\n^{\CC^k}_{i,t} \ = \ d\ + \ A_{i,t}, \qquad A_{i,t}\in
	  \Omega^1(U_i,\operatorname{Mat}_{k\times k}(\CC)),
\]
is a family of connections on $M\times\CC^k\to M$.  Let
\[
	F_{i,t}\ : = \ dA_{i,t}\ + \ A_{i,t}\wedge A_{i,t},
\]
denote the curvature of $\n^{\CC^k}_{i,t}$.

Note that the sets $U_i$ are not necessarily $G$-invariant. Nevertheless, for $X\in \grg$ the restriction of the infinitesimal action $\calL^\E_X$ on $\Omega^\b(M,\E)$ to $\Omega^\b(U_i,\E_i)$ is well defined. Similarly, the infinitesimal action $\calL^{\calS_i}_X$ of $X$ on $\calS_i$ is well defined and there exists an action $\calL_{i,X}$ of $X$ on $M\times\CC^k\to M$ such that
\[
	\calL^\E\big|_{U_i}\ = \ \calL^{\calS_i}\otimes1\ + \ 1\otimes \calL_{i,X}.
\]
Using the action $\calL_{i,X}$ we define an equivariant version of $\n^{\CC^k}_{i,t}$ as follows. Set
\[
	\Omega_G(U_i,\CC^k)\ = \
	\left\{\, \omega\in \Omega^\b(U_i,\CC^k)[\grg]:\,
	\calL_{i,X}\omega(Y)= \omega([X,Y]) \ \
	\text{for all} \ X,Y\in \grg\,\right\}.
\]
Then (cf. \eqref{E:eq connection})
\[
	\n^{\CC^k}_{i,t,\grg} \omega(X)\ := \
	\n^{\CC^k}_{i,t}\omega(X)\ - \iota_{X_M}\omega(X),
	\qquad X\in \grg, \ \ \omega\in \Omega_G(U_i,\CC^k).
\]
Similarly, for $X\in \grg$, we define the moment of $X$ with respect to $\n^{\CC^k}_{i,t}$ by
\[
	\mu_{i,t}(X)\ := \ \calL_{i,X}\ - \ \n^{\CC^k}_{i,t;X_M},
\]
and the equivariant curvature  (cf. \eqref{E:eq curvature})
\[
	F_{i,t,\grg}(X) \ := \  F_{i,t}\ + \ \mu_{i,t}(X), \qquad X\in \grg.
\]

\subsubsection{Local equivariant transgression forms}\label{SS:local transgression}

We remark that if $M$ is a spin-manifold, $\calS$ is a spinor bundle over $M$, and $\E= \calS\otimes\W$ then $\n^{\CC^k}_{i,t,\grg}$ and $F_{i,t,\grg}$ coincide with the restriction of $\n^\W_{t,\grg}$ and $F^\W_{t,\grg}$ to $U_i$. In particular, the equivariant Chern form
\[
	\ch_{t,\grg}\big(\n^{\CC^k}_{i,t}(X)\big) \ := \ \Str\exp\big(-F_{i,t,\grg}(X)\big)
\]
is equal to the restriction of $\ch_{t,\grg}(\E/\calS)$ to $U_i$.

We define a {\em local equivariant transgression form}
\begin{equation}\label{E:local transgression}
		\Tch_\grg(\n^{\CC^k}_{i,1,\grg},\n^{\CC^k}_{i,0,\grg})(X) \ := \
	-\,\int_0^1 \
	 \Str\left[
		\frac{d\n^{\CC^k}_{i,t,\grg}(X)}{dt}\,\exp\big(-F_{i,t,\grg}(X)\big)
												\right]\,dt.
\end{equation}
The arguments of  \cite[Therem~7.7]{bgv} show that
\begin{multline}\notag
	d\Tch_\grg(\n^{\CC^k}_{i,1,\grg},\n^{\CC^k}_{i,0,\grg})(X) \ = \ 	
	\ch_{1,\grg}(\n^{\CC^k}_{i,1,\grg})(X) \ - \ \ch_{0,\grg}(\n^{\CC^k}_{i,0,\grg})(X)  \\ = \
	\ch_{1,\grg}(\E/\calS)\big|_{U_i} \ - \ \ch_{0,\grg}(\E/\calS)\big|_{U_i}.
\end{multline}

\subsubsection{A construction of a transgression form for the relative Chern form}\label{SS:transgression rel chern}
To finish the construction of the transgression form for the relative Chern form we need to glue the local transgression forms $\Tch_\grg\big(\n^{\CC^k}_{i,1,\grg}\big)$ into one global form on $M$. This is possible because of the following

\begin{lemma}\label{L:trangression relative chern}
For $i,j\in \{1,\ldots,N\}$ we have
\begin{equation}\label{E:transgression relative chern}
	\Tch_\grg(\n^{\CC^k}_{i,1,\grg},\n^{\CC^k}_{i,0,\grg})(X)\big|_{U_i\cap U_j}
	\ = \
	\Tch_\grg(\n^{\CC^k}_{j,1,\grg},\n^{\CC^k}_{j,0,\grg})(X)\big|_{U_i\cap U_j}.
\end{equation}
\end{lemma}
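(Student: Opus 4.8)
The goal is to show that the locally-defined equivariant transgression forms $\Tch_\grg(\n^{\CC^k}_{i,1,\grg},\n^{\CC^k}_{i,0,\grg})$ agree on overlaps $U_i\cap U_j$, so that they glue to a global form on $M$. The natural strategy is to compare the two local constructions via the transition isomorphisms $\phi_{ij}$ of \eqref{E:phiij} and $\psi_i$ of \eqref{E:psii}. On $U_i\cap U_j$ the composite $\theta_{ij}:=\psi_i^{-1}\circ\psi_j\circ(\phi_{ji}\otimes\Id_{\CC^k})$ (or the appropriate variant, built from $\phi_{ij}$ and the $\psi$'s so as to identify the two trivializations $\calS_i\otimes\CC^k$ and $\calS_j\otimes\CC^k$ of $\E$) is an isomorphism of bundles. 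First I would check that, because $\phi_{ij}$ commutes with the Levi-Civita connection and the Clifford action, the effect of $\theta_{ij}$ on the family of connections $\n^{\CC^k}_{i,t}$ is \emph{purely a gauge transformation by a (t-independent) $GL_k$-valued function} on $U_i\cap U_j$: that is, under $\theta_{ij}$ one has $\psi_j^*\n^{\E}_t = \n^{\calS_j}_t\otimes 1 + 1\otimes(g_{ij}^{-1}\n^{\CC^k}_{i,t}g_{ij})$ for some $G$-invariant $g_{ij}\in C^\infty(U_i\cap U_j, GL_k(\CC))$ independent of $t$, and similarly that $\calL_{j,X}$ is conjugate to $\calL_{i,X}$ by $g_{ij}$. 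The key point is that $g_{ij}$ does not depend on $t$, since the $t$-dependence of $\n^\E_t$ lives entirely in the spinor (Levi-Civita) factor and the Clifford-compatible isomorphisms $\phi_{ij}$ are insensitive to the rescaling of the metric in the collar.

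Second, with that reduction in hand, I would invoke the standard gauge-invariance of the equivariant Chern transgression form: if $\n_{i,t}$ and $\n_{j,t}$ are two families of connections on $M\times\CC^k$ related for all $t$ by conjugation by a single fixed $g\in C^\infty(\,\cdot\,,GL_k)$ that also intertwines the infinitesimal actions, then the equivariant curvatures satisfy $F_{j,t,\grg}(X) = g^{-1}F_{i,t,\grg}(X)\,g$ and $d\n^{\CC^k}_{j,t,\grg}/dt = g^{-1}(d\n^{\CC^k}_{i,t,\grg}/dt)\,g$ (the derivative in $t$ of the conjugate, with $g$ constant in $t$, is just the conjugate of the derivative). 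Plugging these into the definition \eqref{E:local transgression} and using the conjugation-invariance and cyclicity of the supertrace $\Str$ immediately gives $\Tch_\grg(\n^{\CC^k}_{j,1,\grg},\n^{\CC^k}_{j,0,\grg})(X) = \Tch_\grg(\n^{\CC^k}_{i,1,\grg},\n^{\CC^k}_{i,0,\grg})(X)$ on $U_i\cap U_j$, which is \eqref{E:transgression relative chern}.

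The main obstacle I anticipate is bookkeeping the precise form of $\theta_{ij}$ and verifying that the resulting $GL_k$-valued transition function $g_{ij}$ is genuinely $t$-independent and $G$-compatible; this requires carefully unwinding the splittings \eqref{E:psiinabla} together with the fact (stressed after \eqref{E:phiij}) that the $\phi_{ij}$ need not satisfy the cocycle condition. One must be careful that the failure of the cocycle condition does \emph{not} matter here, because the claim \eqref{E:transgression relative chern} is only a pairwise statement on $U_i\cap U_j$, not a global descent statement requiring triple overlaps; the gluing to a global form then follows purely formally from pairwise agreement on a cover. I would also note that the same argument, done with $g_{ij}$ allowed to depend on $t$, would produce an extra exact term, which is why it is essential that in our situation $g_{ij}$ is constant in $t$ — this is exactly where the choice of $\phi_{ij}$ commuting with the (metric-independent) Clifford action enters. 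Everything else is routine manipulation of $\Str$ and is better left to the reader or a one-line computation.
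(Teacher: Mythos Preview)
Your approach is essentially the paper's: extract a $t$-independent $GL_k$-valued transition function $\Phi_{ij}$ (your $g_{ij}$) from the fixed isomorphisms $\phi_{ij},\psi_i,\psi_j$, verify that it conjugates $\n^{\CC^k}_{i,t}$ to $\n^{\CC^k}_{j,t}$ for all $t$ using that $\phi_{ij}$ intertwines the Levi-Civita connections, and conclude via conjugation-invariance of $\Str$ applied to the integrand of \eqref{E:local transgression}. One correction: your remark that ``the $t$-dependence of $\n^\E_t$ lives entirely in the spinor factor'' is false in general --- both $\n^{\calS_i}_t$ and $\n^{\CC^k}_{i,t}$ vary with $t$ --- but this is harmless, since the $t$-independence of $g_{ij}$ follows simply because $\phi_{ij},\psi_i,\psi_j$ are fixed data chosen once and for all, as you also correctly note earlier in the same paragraph.
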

It follows from the above lemma that there exists a unique  global equivariant differential form $\Tch_\grg(\E/\calS)(X)$ such that for each $i=1,\ldots,N$
\[
	\Tch_\grg(\E/\calS)(X)\big|_{U_i}\ = \
	\Tch_\grg(\n^{\CC^k}_{i,1,\grg},\n^{\CC^k}_{i,0,\grg})(X),
\]
and, hence,
\begin{equation}\label{E:transgression rel chern}
	d\Tch_\grg(\E/\calS)(X) \ = \
	\ch_{1,\grg}(\E/\calS) \ - \ \ch_{0,\grg}(\E/\calS).
\end{equation}

\subsubsection*{Proof of Lemma~\ref{L:trangression relative chern}}
Using \eqref{E:phiij} and \eqref{E:psii} we define the transition functions
\[
	\Phi_{ij}(x):\, \CC^k\ \to \ \CC^k, \qquad x\in U_i\cap U_j,
\]
such that
\[
	\phi_{ij}\otimes 1\ + \ 1\otimes\Phi_{ij} \ = \ \psi_j^{-1}\circ \psi_i.
\]
From \eqref{E:psiinabla} we now obtain
\begin{multline}\label{E:psi*nabla}
	\n^{\calS_j}_t\otimes1\ +\ 1\otimes \n^{\CC^k}_{j,t}	 \ = \
	\psi_j^*\n^{\E_i}_t\ = \
	\psi_j^{-1} \psi_i\circ\Big(\,\psi_i^{-1}\circ\n^{\E_i}_t\circ\psi_i\,\Big)
	\circ\psi_i^{-1} \psi_j
	\\ = \
	\psi_j^{-1} \psi_i\circ
	\big(\,\n^{\calS_i}_t\otimes1\ +\ 1\otimes \n^{\CC^k}_{i,t}\,\big)
	\circ  \psi_i^{-1} \psi_j
	\\ = \
	\phi_{ij}\circ\n^{\calS_i}_t\circ\phi_{ij}^{-1}\otimes1\ +\
	 1\otimes \Phi_{ij}\circ\n^{\CC^k}_{i,t}\circ\Phi_{ij}^{-1}.
\end{multline}
Since by construction $\phi_{ij}$ commute with the Levi-Civita connection, we conclude from \eqref{E:psi*nabla} that
\[
	\n^{\CC^k}_{j,t}\big|_{U_i\cap U_j} \ = \
	\Phi_{ij}^{-1}\circ\n^{\CC^k}_{i,t}\big|_{U_i\cap U_j}\circ \Phi_{ij}
\]
and, hence,
\[
	A_{j,t}(x)\ = \ \Phi_{ij}^{-1}\circ A_{i,t}(x)\circ\Phi_{ij} \ + \
	\Phi_{ij}^{-1}\circ d\Phi_{ij}, \qquad x\in U_i\cap U_j.
\]
It follows that
\[
	\frac{d\n^{\CC^k}_{j,t,\grg}}{dt}\ = \ \frac{dA_{j,t}}{dt} \ = \
	\Phi_{ij}^{-1}\circ \frac{d\n^{\CC^k}_{i,t,\grg}}{dt}\circ\Phi_{ij}.
\]
Also
\[
	F_{j,t,\grg}(X)\ = \
	\Phi_{ij}^{-1}\circ F_{i,t,\grg}(X)\circ\Phi_{ij}.
\]
Hence,
\[
	\frac{d\n^{\CC^k}_{j,t,\grg}}{dt}\,\exp\big(-F_{j,t,\grg}\big)\ = \
	\Phi_{ij}^{-1}\circ
	\frac{d\n^{\CC^k}_{i,t,\grg}}{dt}\,\exp\big(-F_{i,t,\grg}\big)
	\circ\Phi_{ij},
\]
and
\begin{equation}\label{E:Str const}
		\Str\left[
		\frac{d\n^{\CC^k}_{j,t,\grg}}{dt}\,\exp\big(-F_{j,t,\grg}(X)\big)
												\right]
	\ = \
	\Str\left[
		\frac{d\n^{\CC^k}_{i,t,\grg}}{dt}\,\exp\big(-F_{i,t,\grg}(X)\big)
												\right].
\end{equation}				
From \eqref{E:Str const} and \eqref{E:local transgression} we obtain \eqref{E:transgression relative chern}.\hfill$\square$

We remark that the transgression form $\Tch_\grg(\E/\calS)$, constructed in Lemma~\ref{L:trangression relative chern}, depends not only of the family of connections $\n^\E_t$ but also on the family of Clifford actions $c_t$.

\subsection{Transgression of a product}\label{SS:transgression_	product}
Suppose now that $\E_1$ and $\E_2$ are two $G$-equivariant vector bundles over $M$ and  $\n^{\E_1}_t$, $\n^{\E_2}_t$ are smooth families of connections on $\E_1$ and $\E_2$ respectively. Let
\[
	\beta_{1,\grg}(\n^{\E_1}_t)(X), \ \ \beta_{2,\grg}(\n^{\E_1}_t)(X)
	\ \in \ \Omega^\b(M),
\]
be two characteristic forms and consider the product form
\[
	\beta_{\grg,t}(X)\ := \ \beta_{1,\grg}(\n^{\E_1}_t)(X)\wedge
	\beta_{2,\grg}\n^{\E_2}_t)(X).
\]
We define the transgression $T\beta_\grg(X)$ by
\begin{equation}\label{E:transgression_product}
	T\beta_\grg(X) \ := \
	T\beta_{1,\grg}(\n^{\E_1}_0,\n^{\E_1}_1)(X)\wedge \beta_{2,\grg}(\n^{\E_2}_1)(X)
	 +
	\beta_{1,\grg}(\n^{\E_1}_0)(X)\wedge T\beta_{2,\grg}(\n^{\E_2}_0,\n^{\E_2}_1)(X).
\end{equation}
One readily sees that
\[
	d_\grg\, T\beta_\grg(X)\ = \ \beta_{\grg,1}(X)\ - \ \beta_{\grg,0}(X).
\]

\subsection{Transgression of the Atiyah-Singer integrand}\label{SS:transgression AS}
Let
\[
		\al_\grg(\nab^\EE)(X)\ :=\
	\hat{A}_\grg(g^M)(X)\cdot\ch_\grg(\cal{E}/\calS)(X),
\]
denote the integrand of the Atiyah-Singer index formula. This is a product of two forms. Suppose $(\E,c_t,\n^\E_t)$ is a family of Dirac bundles as in Section~\ref{S:deformation}. We now construct a transgression form for $\alpha_\grg$ and the pair of connections $\n^\E_0$ and $\n^\E_1$.

The form $\hat{A}_\grg$ is a usual characteristic form and has a transgression form defined by \eqref{E:transgression}. The transgression form of the relative Chern class $\ch_\grg(\cal{E}/\calS)(X)$ is defined in Subsection~\ref{SS:transgression rel chern}. Hence, we can define the transgression form $T\al_\grg(\nab^\EE,\n_0^{\EE})(X)$ by formula \eqref{E:transgression_product}, which is valid even when one of the characteristic
forms is not of the standard type described in Subsection~\ref{SS:eq char classes}. We note that even though the differential
\begin{equation}\label{E:AStransgression}
		dT\al_\grg(\nab^\EE,\n_0^{\EE})(X) \ = \alpha_g(\n^\E_1)(X)\ - \ \alpha_g(\n^\E_0)(X)
\end{equation}
depends only on the family of connections $\n^\E_t$, as in Remark~\ref{R:transgression depends}, the transgression form $T\alpha_\grg$ depends also on the family of Clifford actions $c_t$.

\subsection{The general equivariant APS-index theorem}\label{SS:general APS}

We are now ready to formulate our principal result -- the equivariant APS-type index theorem in the non-product case.

Let $(\E,c,\n^\E)$ be a $G$-equivariant Dirac bundle over $M$ and let $E= \E\big|_\pM$.

\begin{Definition}\label{D:admissible bc}
A boundary operator $A_\pM:C^\infty(\pM,E^+)\to C^\infty(\pM,E^+)$ is called {\em admissible for $\E$} if there exists a connection $\tiln^E=\tiln^{E^+}\oplus\tiln^{E^-}$ on $E$ compatible with the restriction of the Clifford action to the boundary (cf. Subsection~\ref{SS:deformation Dirac budnle}) such that
\begin{equation}\label{E:admissible ApM}
	A_\pM\ = \ -\sum_{j=1}^{n-1} c(e^n)\,c(e^j)\,\tiln^{E^+}_{e_j}.
\end{equation}
Here $\{e_1,\ldots,e_{n-1}\}$ is a basis of $T\pM$, $e_n$ is the inward unit normal vector to $\pM$ and $\{e^1,\ldots,e^{n}\}$ is the dual bundle of $T^*M$. We say that {\em $A_\pM$ is defined by the boundary connection $\tiln^E$}.
\end{Definition}
\

It follows from   Proposition~\ref{P:deformation Dirac} that for every admissible boundary operator $A_\pM$ there exists an admissible deformation $(\E,c_t,\n^\E_t)$ of the Dirac bundle $(\E,c,\n^\E)$ such that $\n^\E_0\big|_\pM= \tiln^E$.   Let $(\E,c_t,\n^\E_t)$ be such an admissible deformation and let $D_t$ denote the Dirac operator associated with $(\E,c_t,\n^\E_t)$. Then $D_1=D$ and near the boundary $D_0$ has the form
\[
	D_0\ = \ c(du)\,\left(\,\frac{\p}{\p u}+ A_\pM\,\right).
\]

\begin{Theorem}\label{T:genral APS}
Let $(M,g^M)$ be a compact oriented even-dimensional Riemannian manifold with boundary $\pM$, and let $G$ be a compact Lie group acting
by orientation-preserving isometries on $M$. Fix a $G$-equivariant Dirac bundle  $(\E,c,\n^\E)$ over $M$ with Clifford connection $\n^\E$ and associated Dirac operator $D$. Let $A_\pM$ be an admissible boundary operator defined by  the boundary connection $\tiln^E$.

If $X\in\grg$ is small and the corresponding vector field  $X_M$ has no zeros on $\pM$, then for any admissible deformation $(\E,c_t,\n^\E_t)$ with $\n^\E_0\big|_\pM= \tiln^E$ we have
\begin{equation}\label{E:general APS}
	\ind(D,A_\ppM)\ = \
	(2\pi i)^{-n/2}\left[\int_M\al_\grg(\nab^\EE)(X)
	\ - \
	\int_\pM T\al_\grg(\nab^\EE,\n_0^{\EE})(X)\right]\ -\ \tilde{\eta}_X(A_\ppM),
\end{equation}
where
\begin{equation}\label{E:tilde eta}
	\al_\grg(\nab^\EE)(X)\ :=\
	\hat{A}_\grg(g^M)(X)\cdot\ch_\grg(\cal{E}/\calS)(X),\qquad
	\tilde{\eta}_X(A_{\ppM})\ :=\
	\frac{\eta_X(A_{\ppM})+h_{e^{-X}}(A_{\ppM})}2,
\end{equation}
and $T\al_\grg(\nab^\EE,\n_0^{\EE})(X)$ is the equivariant
transgression form of $\al_\grg(\nab^\EE)(X)$ described in Subsection~\ref{SS:transgression AS}.
\end{Theorem}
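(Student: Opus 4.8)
The plan is to reduce Theorem~\ref{T:genral APS} to Goette's product-case theorem (Theorem~\ref{T:eq APS product}) applied to the deformed operator $D_0$, and then to rewrite the characteristic-class data of $D_0$ in terms of the original Dirac bundle $(\E,c,\n^\E)$ using the transgression machinery of Section~\ref{S:ind-form}. First I would invoke Theorem~\ref{T:stability of the index}: since $(\E,c_t,\n^\E_t)$ is an admissible deformation with $\n^\E_0|_\pM=\tiln^E$, and since condition (ii) of Proposition~\ref{P:deformation Dirac} guarantees that the Clifford action and hence the boundary operator $A_\pM$ are $t$-independent, the family $A_t(0)=A_\pM$ is constant in $t$, so $\indl(D,A_\pM)=\indl(D_0,A_\pM)$. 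Moreover $D_0$ is a product near the boundary with $D_0|_U=c(du)(\partial_u+A_\pM)$, so $B_0=A_\pM$ and Theorem~\ref{T:eq APS product} applies verbatim to $D_0$, yielding
\[
	\ind(D_0,A_\pM)=(2\pi i)^{-n/2}\int_M\al_\grg(\n^\E_0)(X)-\tilde\eta_X(A_\pM),
\]
where I use $\tilde\eta_X(A_\pM)=\tfrac12(\eta_X(A_\pM)+h_{e^{-X}}(A_\pM))$ as in \eqref{E:tilde eta}. Here I must check the hypotheses of Theorem~\ref{T:eq APS product}: $X$ small and $X_M$ nonvanishing on $\pM$ are exactly the stated hypotheses, and the metric on $\pM$ entering $\al_\grg(\n^\E_0)$ is $g^\pM=g^M_0|_\pM=g^M|_\pM$ by property (v) of the admissible metric deformation, so the boundary data are genuinely those of the original geometry.

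Next I would compare $\int_M\al_\grg(\n^\E_0)(X)$ with $\int_M\al_\grg(\n^\E)(X)=\int_M\al_\grg(\n^\E_1)(X)$ by Stokes's theorem. By \eqref{E:AStransgression} we have $\al_\grg(\n^\E_1)(X)-\al_\grg(\n^\E_0)(X)=d_\grg\, T\al_\grg(\n^\E_1,\n^\E_0)(X)$, where $T\al_\grg$ is the transgression form built in Subsection~\ref{SS:transgression AS} from the transgression of $\hat A_\grg$ (formula \eqref{E:transgression}) wedged with the relative Chern transgression $\Tch_\grg(\E/\calS)$ constructed via the local patching of Lemma~\ref{L:trangression relative chern}, combined through the product rule \eqref{E:transgression_product}. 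Since $X_M$ has no zeros on $\pM$, on a neighborhood of $\pM$ the equivariant differential $d_\grg=d-\iota_{X_M}$ can be compared with the ordinary $d$; more carefully, integrating an equivariantly exact form over $M$ and extracting the appropriate degree components, the contraction terms $\iota_{X_M}$ contribute only boundary terms or vanish by the top-degree count, so
\[
	\int_M\al_\grg(\n^\E_1)(X)-\int_M\al_\grg(\n^\E_0)(X)=\int_\pM T\al_\grg(\n^\E_1,\n^\E_0)(X).
\]
Combining this with the displayed formula for $\ind(D_0,A_\pM)$ and with $\ind(D,A_\pM)=\ind(D_0,A_\pM)$ gives exactly \eqref{E:general APS}.

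The main obstacle, and the step deserving the most care, is the Stokes argument with the \emph{equivariant} differential: $T\al_\grg(\n^\E_1,\n^\E_0)(X)$ is an inhomogeneous equivariant form, $d_\grg$ mixes $d$ with $\iota_{X_M}$, and one must argue that $\int_M \iota_{X_M}(\,\cdot\,)$ contributes nothing to the relevant component. The clean way is to observe that on all of $M$ the top-degree (degree $n=\dim M$) component of $\iota_{X_M}\omega$ is the interior contraction of an $(n+1)$-form, hence zero, so $\int_M (d_\grg T\al_\grg)_{[n]}=\int_M (dT\al_\grg)_{[n]}=\int_\pM (T\al_\grg)_{[n-1]}$ by ordinary Stokes; since $\al_\grg$ and its transgression are equivariantly closed modulo exact, only these top components feed the integral, and the equality of equivariant integrals follows. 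I would also need to verify that the transgression form $T\al_\grg$ is globally well-defined on $M$ (not just on $\pM$) — this is precisely the content of Lemma~\ref{L:trangression relative chern} and the gluing discussion in Subsection~\ref{SS:transgression rel chern}, which I may cite — and that restricting it to $\pM$ is legitimate, which it is because $c_t$ and $\n^\E_t$ are continuous in $t$ by Proposition~\ref{P:deformation Dirac}(iv). A minor point to record explicitly is that $A_\pM$ is admissible precisely so that such a deformation with $\n^\E_0|_\pM=\tiln^E$ exists (via Proposition~\ref{P:deformation Dirac}), and that $\eta_X$, $h_{e^{-X}}$, and hence $\tilde\eta_X$ depend only on $A_\pM$ and $g^\pM$, not on the choice of extension into the interior, so the right-hand side is unambiguous even though the family $(\E,c_t,\n^\E_t)$ is not canonical.
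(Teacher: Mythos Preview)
Your proposal is correct and follows essentially the same route as the paper: reduce to $D_0$ via Theorem~\ref{T:stability of the index}, apply Goette's product-case formula (Theorem~\ref{T:eq APS product}) to $D_0$, and then convert $\int_M\al_\grg(\n^\E_0)$ into $\int_M\al_\grg(\n^\E)-\int_\pM T\al_\grg$ via the transgression identity \eqref{E:AStransgression} and Stokes. Your degree-counting justification for why $\int_M d_\grg\omega=\int_\pM\omega$ (the $\iota_{X_M}$ piece has no degree-$n$ component) is the right way to handle the equivariant Stokes step, which the paper passes over without comment; note, though, that the nonvanishing of $X_M$ on $\pM$ is a hypothesis of Goette's theorem, not something needed for this Stokes argument.
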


\begin{remark}\label{R:conpare w/product}
Comparing this fomrula with Goette's formula \eqref{E:eq APS produc},
aside from considering generalized APS boundary conditions, the sole difference
is the appearance of a boundary transgression term. This exactly mirrors Gilkey's
addition to the Atiyah-Patodi-Singer formula for the (non-equivariant) index.
The proof will also follow Gilkey's idea, but we will not be using
any a priori knowledge regarding the form of the boundary term, thus
avoiding the use of invariance theory. A more superficial difference
from his proof is that the ($G$-equivariant) collar we use is a part of $M$,
rather than being external to it.
\end{remark}

\begin{remark}\label{R:transgression depends2}
As in Remark~\ref{R:transgression depends}, the transgression form $T\al_\grg(\nab^\EE,\n_0^{\EE})(X)$ depends on the choice of the admissible deformation $(\E,c_t,\n^\E_t)$. However, formula \eqref{E:general APS} shows that the integral $\int_\pM T\al_\grg(\nab^\EE,\n_0^{\EE})(X)$ does not depend on this choice. It depends only on the restrictions of the Dirac bundles $(\E,c_i,\n^\E_i)$ ($i=0,1$) to the boundary.
\end{remark}

\begin{proof}
Let $D_0$ be the Dirac operator corresponding to $\n_0^{\EE}$. Then
by Theorems~\ref{T:stability of the index}  and \ref{T:eq APS product}
\begin{multline}\label{E:computation of ind}
	\ind(D,A_\ppM)\ =\ \ind(D_0,A_\ppM)\
	=\ \int_M\alpha_\grg(\n_0^{\EE})(X) \ -\ \tilde{\eta}_X(A_\ppM)
  	\\ = \
    \int_{M} \alpha_\grg(\n^{\EE})(X)\ +\
    \int_M \Big[\,\alpha_\grg(\n_0^{\EE})(X)- \alpha_\grg(\nab^{\EE})(X)\,\Big]
    \ - \  \tilde{\eta}_X(A_\ppM)
\end{multline}
Using the transgression form \eqref{E:AStransgression}, the second term in the right hand side of \eqref{E:computation of ind} can be computed  as
\begin{equation}\label{E:intU1}
		\int_M \Big[\,\alpha_\grg(\n_0^{\EE})(X)- \alpha_\grg(\nab^{\EE})(X)\,\Big]
		\ = \
	-\int_M d_\grg\,T\alpha_\grg(\nab^{\EE},\n_0^{\EE})(X)
	\ = \
	-\int_{\pM} T\alpha_\grg(\nab^{\EE},\n_0^{\EE})(X).
\end{equation}

Combining \eqref{E:computation of ind} and \eqref{E:intU1} we obtain \eqref{E:general APS}.
\end{proof}

\section{A choice of the boundary conditions}\label{S:special bc}

The equivariant APS index formula take an especially nice and useful form if we choose convenient boundary conditions $A_\pM$. In the next two sections we discuss some such choices. In this section we introduce boundary conditions convenient for a general generalized Dirac operator (in particular, for a twisted Dirac operator on a spin manifold). In the next section we discuss boundary conditions suitable for studying the twisted signature operator.

\subsection{A choice of a connection on the product bundle}\label{SS:connection on product}
A product connection on the cylinder $\pM\times[a,0]$ is defined by its restriction to the boundary. One possible choice of such a connection is the restriction $\n^\E|_{\pM}$ of $\n^\E$ to $\pM$. However, that is not the most natural and the most convenient choice for many applications. To see an example of this, suppose that $M$ is a spin-manifold and  $\E=\calS$ is the bundle of spinors. Let $\n^\calS$ be the Levi-Cevita connection on $\calS$.

Let $\{e_1,\ldots,e_{n-1}\}$ be a local orthonormal frame of $T\pM$ and let $e_n$ be the inward unit normal vector to $\pM$.
We call $\{e_1,\ldots,e_{n}\}$ an adapted orthonormal frame for $TM$ along $\pM$. We use the parallel transport along the rays $y\times(-\infty,0]$ to extend $\{e_1,\ldots,e_{n}\}$ to an orthonormal frame of $T\big(\pM\times(-\infty,0]\big)$. Let $\{e^1,\ldots,e^{n}\}$ denote the dual frame of $T^*\big(\pM\times(-\infty,0]\big)$. The frame $\{e_1,\ldots,e_{n}\}$ induces a trivialization of $\calS$ and we denote by $\p_i$ the derivative along $e_i$ defined by this trivialization.  By formula (3.13) of \cite{bgv} we have
\begin{equation}\label{E:nS}
	\n^\calS_{e_i} \ = \ \p_i\ + \
	 \frac14\,\sum_{j,k=1}^n\,\omega_{ij}^k\,c(e^j)\,c(e^k),
\end{equation}
where $\omega_{ij}^k:= e^k(\n_{e_i}^{LC}e_j)$ are the coefficients of the Levi-Civita connection. Notice that
\begin{equation}\label{E:omega=-omega}
	\omega_{ij}^k\ =\ -\omega_{ik}^j.
\end{equation}

Let $S$ denote the restriction of $\calS$ to $\pM$. Since $n$ is even, $S$ is a direct sum of two copies of spinor bundles on $\pM$. Let $\n^S$ denote the Levi-Civita connection on $S$. Using \eqref{E:nS} and \eqref{E:omega=-omega} we obtain
\begin{equation}\label{E:nS-ncalS}
	\n^S_{e_i} \ = \ \p_i\ + \
	 \frac14\,\sum_{j,k=1}^{n-1}\,\omega_{ij}^k\,c(e^j)\,c(e^k)
	 \ = \ \n^\calS_{e_i}\ - \ \frac{1}{2}\,
	    \sum_{j=1}^{n-1}\, L_{ij}\,c(e^j)\,c(e^n),
\end{equation}
where $L_{ij}:= \omega_{ij}^n$ is the second fundamental form of the embedding of $\pM$ into $M$. In particular, $\n^S\not= \n^\calS\big|_\pM$.

Let now $\E$ be an arbitrary Dirac bundle over $M$ and let $E$ denote its restriction to $\pM$. Motivated by \eqref{E:nS-ncalS} we define a connection $\tiln^E$ on $E$ by
\begin{equation}\label{E:nE}
	\tiln^E_{e_i}  \ := \ \n^\E_{e_i}\ - \ \frac{1}{2}\,
	    \sum_{j=1}^{n-1}\, L_{ij}\,c(e^j)\,c(e^n),
	   \qquad i=1,\ldots, n-1.
\end{equation}

\subsection{The restriction of the twisting curvature}\label{SS:restriction of twisting curvature}
The curvature of the connection $\tiln^E$ is not equal to the restriction of the curvature of $\n^\E$ to $\pM$. However, the  twisting curvature (cf. Subsection~\ref{SS:twisting curvature}) of $\tiln^E$ is equal to the restriction of the twisting curvature of $\n^\E$, as we now explain.

\begin{lemma}\label{L:twisting curvature}
Let $\tiln^E$ be defined by \eqref{E:nE} and let $\tilF^{E/S}$ denote the twisting curvature of $\tiln^E$. Then $\tiln^E$ is compatible with the restriction of the Clifford action to the boundary (cf. Subsection~\ref{SS:deformation Dirac budnle}) and
\begin{equation}\label{E:twisting=restriction}
	\tilF_\grg^{E/S}\ = \ F_\grg^{\E/\calS}\big|_{\pM}.
\end{equation}
\end{lemma}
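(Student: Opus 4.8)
The statement has two parts: first, that $\tiln^E$ defined by \eqref{E:nE} is a boundary connection compatible with the restriction of the Clifford action to $\pM$ (in the sense of Definition~\ref{D:boundary connection}), and second, the curvature identity \eqref{E:twisting=restriction}. For the compatibility, I would verify directly that $\tiln^E$ satisfies the defining Leibniz-type rule of Definition~\ref{D:boundary connection}. Starting from the fact that $\n^\E$ is a Clifford connection for the \emph{ambient} Levi-Civita connection $\n^{LC}$, one must account for the discrepancy between $\n^{LC}\big|_{\pM}$ and the intrinsic $\n^{\pM,LC}$, which is exactly the term $-\tfrac12\sum_j L_{ij}\,c(e^j)c(e^n)$ by the Gauss formula (the second fundamental form is $L_{ij}=\omega_{ij}^n$). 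Plugging $\tiln^E_{e_i}=\n^\E_{e_i}-\tfrac12\sum_j L_{ij}c(e^j)c(e^n)$ into the compatibility relation and using the Clifford algebra identities, the correction term precisely cancels the difference between $c(\n^{LC}_{e_i}\xi)$ and $c(\n^{\pM,LC}_{e_i}\xi)$; this is the same computation that is implicit in passing from \eqref{E:nS} to \eqref{E:nS-ncalS} for the spinor bundle, now carried out for a general Dirac bundle.

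For the curvature identity, the strategy is to reduce to the model case and use the twisting-curvature formula \eqref{E:twisting curvature}. Locally on $\pM$ one may write $E\simeq S\otimes W$ with $S$ a spinor bundle of $\pM$ and $\W$ a twisting bundle, and correspondingly $\E\simeq\calS\otimes\W$ near the boundary; the point (cf. Remark~\ref{R:twisting curvature}) is that the twisting curvature is, by definition, exactly the part of the full curvature that is \emph{not} of Clifford type, i.e. it equals $1\otimes F^\W$. Since by construction $\tiln^E$ differs from $\n^\E\big|_{\pM}$ only by a term built from Clifford multiplications (the second fundamental form contribution lies in the image of $c\otimes c$, which upon the splitting $\E=\calS\otimes\W$ acts only on the spinor factor $\calS$), the $\W$-component of the connection — and hence the twisting connection $\n^\W$ — is the same for $\tiln^E$ and for $\n^\E\big|_{\pM}$. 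Therefore $\tilF^{E/S}=F^\W\big|_{\pM}=F^{\E/\calS}\big|_{\pM}$ in the non-equivariant case. The equivariant refinement \eqref{E:twisting=restriction} then follows by the same reasoning applied to the moments: the moment $\mu^\E(X)=\calL^\E_X-\n^\E_{X_M}$ decomposes as $\mu^\calS(X)\otimes 1+1\otimes\mu^\W(X)$, and since $X_M$ is tangent to $\pM$ (as $G$ acts by isometries preserving $\pM$) the correction term in \eqref{E:nE} contributes only to the spinor/Clifford part, so the $\W$-moment is unchanged; combining with \eqref{E:eq twisting curvature} gives the equivariant identity.

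Alternatively — and perhaps cleaner — one can avoid the local spinor splitting and argue intrinsically: the twisting curvature is characterized as $F^{\E/\calS}=F^\E-\tfrac14\,g^M(Re_k,e_l)\,c(e^k)c(e^l)$, and one computes both sides of \eqref{E:twisting=restriction} by expanding the curvature of $\tiln^E$ via the Cartan structure equation. The curvature of $\tiln^E$ equals the restriction of $F^\E$ plus terms coming from the second fundamental form and the Gauss equation relating the intrinsic curvature of $\pM$ to $R\big|_{\pM}$; the Gauss equation contributes precisely the terms that, when the Clifford-type piece $\tfrac14 g^{\pM}(R^{\pM}e_k,e_l)c(e^k)c(e^l)$ is subtracted off to form $\tilF^{E/S}$, match the restriction of $\tfrac14 g^M(Re_k,e_l)c(e^k)c(e^l)$. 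In other words, the difference between the ambient and intrinsic Clifford-curvature corrections is exactly absorbed. This is a bookkeeping computation in the Clifford algebra using \eqref{E:omega=-omega}.

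\textbf{Main obstacle.} The conceptually routine but technically delicate part is the curvature bookkeeping: one must carefully track how the second fundamental form $L_{ij}$ enters both the curvature of $\tiln^E$ and the Gauss equation for $R^{\pM}$ versus $R\big|_{\pM}$, and confirm that the Clifford-type pieces cancel in precisely the right way so that only the twisting part survives, identically to $F^{\E/\calS}\big|_{\pM}$. The extension to the equivariant setting is then essentially formal once one observes that $X_M$ is tangent to the boundary, so the moment terms split compatibly and the extra $L_{ij}$-correction in \eqref{E:nE} does not affect the $\W$-component of the moment; this is where I expect the only genuine (though minor) subtlety, namely justifying that the ``$\End(\E)$-valued'' correction term contributes to the Clifford/spinor part and not to the twisting part at the level of moments as well as curvatures.
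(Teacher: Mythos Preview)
Your proposal is correct and your primary approach---reduce locally to the spin case $\E\simeq\calS\otimes\W$, observe that the second-fundamental-form correction in \eqref{E:nE} lies purely in the spinor (Clifford) factor so that $\tiln^E=\n^S\otimes 1+1\otimes\n^W$, and then read off both compatibility and the twisting-curvature identity from this tensor splitting---is exactly the route the paper takes. Your alternative intrinsic computation via the Gauss equation would also work but is not pursued in the paper, which keeps the argument to a few lines by appealing directly to the local spin model and Remark~\ref{R:twisting curvature}.
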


\begin{proof}
Since the statement is local we can assume that $M$ is a spin-manifold and $\E=\calS\otimes\W$, cf. Proposition~3.35 of \cite{bgv}. Moreover, by Proposition~3.40 of \cite{bgv} there exists a connection $\n^\W$ on $\W$ such that
\[
	\n^\E\ = \ \n^\calS\otimes1\ + \ 1\otimes\n^\W.
\]
Let $W=\W|_{\pM}$ denote the restriction of $\W$ to the boundary and set $\n^W:= \n^\W\big|_{\pM}$. Then the equivariant curvature
\[
	\tilF^W_\grg\ =\  (\n^W)^2+\mu^W(X)
\]
is given by the restriction of $F^\W_\grg$ to the boundary: $F^W_\grg= F^\W_\grg\big|_{\pM}$.

Recall that we denote by $S$ the restriction of $\calS$ to the boundary. Then $E= S\otimes{}W$ and
\[
	 \n^\E\big|_\pM\ = \ \n^\calS\big|_\pM\otimes1\ + \ 1\otimes\n^W.
\]
From \eqref{E:nS-ncalS} and \eqref{E:nE} we now conclude that
\[
	\tiln^E\ = \ \n^S\otimes 1\ + \ 1\otimes\n^W.
\]
Since the Levi-Civita connection $\n^S$ is  compatible with the restriction of the Clifford action to the boundary, so is $\tiln^E$. Also, by Remark~\ref{R:twisting curvature}, $\tilF_\grg^{E/S}= 1\otimes{}F_\grg^W$, cf. Page~121 of \cite{bgv}. Similarly, $F_\grg^{\E/\calS}= 1\otimes{}F_\grg^\W$ so that $F_\grg^{\E/\calS}\big|_\pM= \tilF_\grg^{E/S}$.
\end{proof}

\subsection{The APS-index with special boundary conditions}\label{SS:special APS}
We now formulate a refinement of the Index Theorem~\ref{T:genral APS}, for boundary conditions defined by the connection $\tiln^E$.

\begin{theorem}\label{T:special APS}
Let $\tiln^E$ be given by \eqref{E:nE} and let $A_\pM$ be an admissible boundary operator  defined by $\tiln^E$, cf. Definition~\ref{D:admissible bc}. Let $g_t^M$ be an admissible deformation of $g^M$, cf. Definition~\ref{D:admissible gMt}. Let $T\hat{A}_\grg(g^M,g_0^M)(X)$ be the transgression form of the $\hat{A}$-form associated to this family. Then for small enough $X\in\grg$ such that the corresponding vector field  $X_M$ has no zeros on $\pM$, we have
\begin{multline}\label{E:special APS}
	\ind(D,A_\ppM)\ = \
	(2\pi i)^{-n/2}\Big[\,
	\int_M\hat{A}_\grg(g^M)(X)\cdot\ch_\grg(\cal{E}/\calS)(X)
	\\ - \
	\int_\pM T\hat{A}_\grg(g^M,g_0^M)(X)\cdot\ch_\grg(F^{\Es})(X)
	\,\Big]\ - \ \tilde{\eta}_X(A_\ppM),
\end{multline}
where $T\hat{A}_\grg(g^M,g_0^M)(X)$ is the equivariant
transgression form of the $\hat{A}_\grg$-form described in Subsection~\ref{eq-trans}.
\end{theorem}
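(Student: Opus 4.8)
The plan is to show that this is a corollary of the general index Theorem~\ref{T:genral APS}, obtained by specializing the deformation and then simplifying the transgression term using Lemma~\ref{L:twisting curvature}. First I would invoke Theorem~\ref{T:genral APS} with the boundary operator $A_\pM$ defined by the connection $\tiln^E$ of \eqref{E:nE}; by Lemma~\ref{L:twisting curvature} this $\tiln^E$ is indeed compatible with the restriction of the Clifford action to $\pM$, so $A_\pM$ is admissible in the sense of Definition~\ref{D:admissible bc}. By Proposition~\ref{P:deformation Dirac} there is an admissible deformation $(\E,c_t,\n^\E_t)$ of the Dirac bundle with $\n^\E_0\big|_\pM=\tiln^E$ that is compatible with the chosen admissible metric deformation $g^M_t$; applying \eqref{E:general APS} gives
\[
	\ind(D,A_\ppM)\ = \
	(2\pi i)^{-n/2}\Big[\int_M\al_\grg(\nab^\EE)(X)
	\ - \ \int_\pM T\al_\grg(\nab^\EE,\n_0^{\EE})(X)\Big]\ -\ \tilde{\eta}_X(A_\ppM).
\]
Since $\al_\grg(\nab^\EE)(X)=\hat A_\grg(g^M)(X)\cdot\ch_\grg(\E/\calS)(X)$, the bulk integral already matches the desired right-hand side, so everything reduces to identifying the boundary transgression term.

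Next I would unwind $T\al_\grg$ using the product rule \eqref{E:transgression_product} from Subsection~\ref{SS:transgression AS}, writing $\al_\grg$ as $\hat A_\grg$ wedge $\ch_\grg(\E/\calS)$, giving two terms: $T\hat A_\grg(g^M,g^M_0)(X)\wedge\ch_{0,\grg}(\E/\calS)(X)$ and $\hat A_\grg(g^M_0)(X)\wedge\Tch_\grg(\E/\calS)(X)$, each then restricted (pulled back) to $\pM$. The key point is that the second term must be shown to contribute nothing, and the first must be rewritten in the stated form. For the second term: $\ch_{0,\grg}(\E/\calS)$ is computed from the curvature of $\n^\E_0$, and the transgression $\Tch_\grg(\E/\calS)$ interpolates between $\n^\E_0$ and $\n^\E_1$; on the boundary, by part~(iii) of Proposition~\ref{P:deformation Dirac}, $\n^\E_0\big|_\pM=\tiln^E$, and by Lemma~\ref{L:twisting curvature}, $\tilF^{E/S}_\grg = F^{\E/\calS}_\grg\big|_\pM$ — that is, the twisting curvature is already the right (undeformed) one at $t=0$ on $\pM$. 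I would combine this with part~(ii) of Proposition~\ref{P:deformation Dirac} (the Clifford module structure on $\pM$ is independent of $t$) and part~(v)/part~(iii) to argue that, along the boundary, the relative Chern form of the deformed family is constant in $t$, so its transgression pulls back to zero (or more precisely, after the product expansion the terms recombine to leave only the $T\hat A_\grg\cdot\ch_\grg(F^{\Es})$ term). For the first term: $\ch_{0,\grg}(\E/\calS)\big|_\pM=\ch_\grg(\tilF^{E/S})=\ch_\grg(F^{\E/\calS}\big|_\pM)$ again by Lemma~\ref{L:twisting curvature}, which is exactly $\ch_\grg(F^{\Es})(X)$ in the notation of the theorem statement. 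Also $T\hat A_\grg(g^M,g^M_0)(X)$, being the transgression of an honest characteristic form, depends only on the family of Levi-Civita connections $\n^{LC}_t$ of the metrics $g^M_t$ and not on the Clifford data, so this is precisely the transgression form named in the hypothesis. Assembling these identifications yields \eqref{E:special APS}.

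The main obstacle is the vanishing-on-the-boundary claim for the relative-Chern transgression piece, i.e.\ showing that the $\hat A_\grg(g^M_0)\wedge\Tch_\grg(\E/\calS)$ contribution restricted to $\pM$ either vanishes or cancels against the $t$-variation hidden in $\ch_{0,\grg}$ versus $\ch_\grg(F^{\Es})$. This requires tracking, along $\pM$, the local building blocks of Subsection~\ref{SS:local transgression}: one must verify that with the specific choice $\n^\E_0\big|_\pM=\tiln^E$ and with $c_t$ fixed on $\pM$, the local equivariant curvatures $F_{i,t,\grg}(X)$ and the local connection derivatives $d\n^{\CC^k}_{i,t,\grg}/dt$ restrict to the boundary in a way that makes $\Str[\,(d\n^{\CC^k}_{i,t,\grg}/dt)\exp(-F_{i,t,\grg}(X))\,]\big|_\pM$ integrate to zero against $\hat A_\grg(g^M_0)\big|_\pM$, using that $\hat A_\grg(g^M_0)\big|_\pM$ is closed. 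A clean way to organize this is to note that $d_\grg$ of the whole boundary term equals $\al_\grg(\n^\E_1)\big|_\pM-\al_\grg(\n^\E_0)\big|_\pM$ by \eqref{E:AStransgression}, then split this difference using the product structure and Lemma~\ref{L:twisting curvature} to see that only the $\hat A$-factor genuinely changes on $\pM$, forcing the $\ch$-transgression contribution to be $d_\grg$-exact and hence to integrate to zero over the closed manifold $\pM$. I expect the rest — the bookkeeping of constants $(2\pi i)^{-n/2}$ and the handling of $\tilde\eta_X$, which is untouched — to be routine.
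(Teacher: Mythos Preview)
Your approach matches the paper's: invoke Theorem~\ref{T:genral APS} with a deformation from Proposition~\ref{P:deformation Dirac} having $\n^\E_0|_\pM=\tiln^E$, then use Lemma~\ref{L:twisting curvature} (applied at $t=0$ and $t=1$) to see that the relative Chern form restricts to the same form on $\pM$ at both endpoints, so that the boundary transgression collapses to $T\hat A_\grg\cdot\ch_\grg(\E/\calS)$. The paper organizes the last step by computing $d_\grg T\alpha_\grg$ directly on $\pM$ and identifying it with $d_\grg\big(T\hat A_\grg\cdot\ch_\grg(\E/\calS)\big)$ rather than by splitting $T\alpha_\grg$ via \eqref{E:transgression_product}, but the content is the same, and the point you single out as the main obstacle is handled there with the same brevity.
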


\begin{proof}
Let $\al_\grg(\nab^\EE)(X)$ be as in Theorem~\ref{T:genral APS}. By Proposition~\ref{P:deformation Dirac} there exists an admissible deformation $(\E,c_t,\n^\E_t)$ of $(\E,c,\n^\E)$ such that $c_t(\xi)^2=-g^M_t(\xi,\xi)$ for all $\xi\in T^*M$ and the restriction of $\n^\E_0$ to the boundary is equal to $\tiln^E$.  Let $\n^E_t= \n_t^\E\big|_\pM$ and let $L_{t,ij}$ be the second fundamental form of the embedding of $\pM$ into $M$ defined by the metric $g^M_t$. Set (cf. \eqref{E:nE})
\begin{equation}\label{E:nEt}
	\tiln^E_{t,e_i}  \ := \ \n^E_{t,e_i}\ - \ \frac{1}{2}\,
	    \sum_{j=1}^{n-1}\, L_{t,ij}\,c_t(e^j)\,c_t(e^n),
	   \qquad i=1,\ldots, n-1.
\end{equation}
Since the metric $g^M_0$ is a product near the boundary, $\n^E_0= \tiln^E_0$. Hence, $\tiln^E= \tiln_0^{E}$. It follows that
the twisting equivariant curvatures $\tilF^{E/S}_\grg$ and $\tilF^{E/S}_{0,\grg}$ of the connections $\tiln^E$ and $\tiln^E_0$ are equal. By Lemma~\ref{L:twisting curvature}, the restrictions of $F^{\E/\calS}_{\grg}$ and $F^{\E/\calS}_{0,\grg}$ to the boundary are also equal.

Let $\ch_\grg(\E/\calS)$ and $\ch_{0,\grg}(\E/\calS)$ denote the equivariant relative Chern characters of $(\E,c,\n^\E)$ and $(\E,c_0,\n^\E_0)$, respectively. It follows from the previous paragraph that the restrictions of $\ch_\grg(\E/\calS)$ and $\ch_{0,\grg}(\E/\calS)$ to $\pM$ are equal. Hence, along $\pM$
\begin{multline*}
	d_\grg T\alpha_\grg(\nab^{\EE},\n_0^{\EE})(X)\ = \
	\alpha_\grg(\nab^{\EE})(X)-\alpha_\grg(\n_0^{\EE})(X)
	\\ = \
	\hat{A}_\grg(g^M)(X)\cdot\ch_\grg(\E/\calS)(X)
	      \ - \ \hat{A}_\grg(g_0^M)(X)\cdot\ch_{0,\grg}(\E/\calS)(X)
	\\ = \
	\Big[\,\hat{A}_\grg(g^M)(X)-\hat{A}_\grg(g_0^M)\,\Big]\cdot
			\ch_\grg(\E/\calS)(X)
	\\ = \
	d_\grg\,T\hat{A}_\grg(g^M,g_0^M)(X)\cdot\ch_\grg(\E/\calS)(X).
\end{multline*}
The theorem now follows from Theorem~\ref{T:genral APS}.
\end{proof}

\section{The equivariant signature}\label{S:signature}

In this section we apply Theorem~\ref{T:genral APS} to compute the equivariant index of the twisted signature operator with boundary conditions which are very natural for this operator (we note that these boundary conditions are quite different from those considered in Section~\ref{S:special bc}). Throughout the section we assume that $\dim M=n=2m$, and let $\calV$ be a Hermitian vector bundle over $M$ endowed with a Hermitian connection $\n^\calV$. We study the equivariant index of the twisted   signature operator $D=\n^\calV+(\nabla^\calV)^*$. In the case when the connection $\n^\calV$ is flat this index is equal to the equivariant signature of the local system $(\calV,\n^\calV)$.

\subsection{The signature operator}\label{SS:signature operator}
Consider the bundle
\[
	\E \ = \ \Lambda^\b{}T^*M\times\calV.
\]
Then the space of smooth sections of $\E$ is naturally identified with the space $\Omega^\b(M,\calV)$ of differential forms on $M$ with values in $\calV$. The connection $\n^\calV$ on $\calV$ and the Levi-Civita connection on $T^*M$ define a $G$-invariant connection $\n^\E$ on $\E$. Let $c:T^*M\to \End(\E)$ denote the natural Clifford action, given, cf. \cite[Ch.~3]{bgv}, by
\[
	c(e^i)\,\alpha\ := \ e^i\wedge\alpha \ - \ \iota_{e_i}\,\alpha,
	\qquad \alpha\in \Lambda^\b(T^*M)\otimes \calV, \ \ i=1,\ldots,n.
\]
Here  $\iota_{e_i}$ denotes the interior multiplication by $e_i$.

 We consider the grading $\E=\E^+\oplus\E^-$ defined by the $\pm1$ eigenspaces of the {\em chirality operator}
\begin{equation}\label{E:chirality}
 \begin{aligned}
	\Gamma_M:\Omega^\b(M,\calV)&\to \Omega^{n-\b}(M,\calV), \\
	\Gamma_M\,\omega\ := \ i^{n/2+k(k+1)}\,\ast\,&\omega,
	\qquad \text{for}\quad \omega\in \Omega^k(M,\calV),
 \end{aligned}
\end{equation}
where $\ast$ denotes the Hodge star operator. The power of $i$ in \eqref{E:chirality} is chosen so that $\Gamma_M^2=1$. We also notice that
\begin{equation}\label{E:chirality 2}
	\Gamma_M \ = \ i^{n/2}\,c(e^1)\cdots c(e^n),
\end{equation}
where $e^1,\ldots,e^n$ is an orthonormal basis of $T^*M$.

The bundle $\E$ endowed with the Clifford action, the grading, and the connection defined above is a Dirac bundle. The corresponding Dirac operator $D$ is called the {\em twisted signature operator}. By \cite[Proposition~3.53]{bgv}
\[
	D\ = \ \n^\calV\ + \ \big(\n^\calV\big)^*,
\]
where
\[
	\big(\n^\calV\big)^*\ = \ \Gamma_M\circ \n^\calV\circ \Gamma_M:\,
	\Omega^\b(M,\calV)\ \to \ \Omega^{\b-1}(M,\calV)
\]
denotes the formal adjoint of $\n^\calV$.

\subsection{The odd signature operator}\label{SS:odd signature}
To fix the boundary conditions for the signature operator $D$ we first describe a natural operator on $\pM$, called the {\em odd signature operator}, cf. \cite{aps2}.

Let $V$ and $E^+$ denote the restriction of $\calV$ and $\E^+$ to  $\pM$. We set $\n^V:= \n^\calV\big|_\pM$.

Let $e_1,\ldots,e_n$ be an adapted orthonormal basis of $TM\big|_\pM$ constructed as in Subsection~\ref{SS:connection on product}. In particular, this means that $e_1,\ldots,e_{n-1}$ is an orthonormal basis of $T\pM$. Let $e^1,\ldots,e^n$ be the dual basis of $T^*M\big|_\pM$. Then
\[
	\Gamma_\pM\ = \ i^{n/2}\,c(e^1)\cdots  c(e^{n-1}):\,
	\Omega^\b(\pM,V)\ \to\ \Omega^{n-1-\b(}\pM,V)
\]
satisfies $\Gamma_\pM^2=1$. We refer to this operator as the {\em chirality operator on $\pM$}. Notice that
\begin{equation}\label{E:Gamma=Gamma}
	\Gamma_M\ = \ \Gamma_\pM\cdot c(e^n) \ = \ -c(e^n)\cdot \Gamma_\pM.
\end{equation}

\begin{Definition}\label{D:odd signature}
The {\em odd signature operator} is the operator
\begin{equation}\label{E:odd signature}
	D_\pM\ : = \ \Gamma_\pM\circ\n^V\ + \ \n^V\circ\Gamma_\pM:\,
	\Omega^\b(\pM,V) \ \longrightarrow \ \Omega^\b(\pM,V).
\end{equation}
\end{Definition}
By \cite[Proposition~3.58(3)]{bgv}, $(\n^V)^*=\Gamma_\pM\circ\n^V\circ\Gamma_\pM$. Hence,
\begin{equation}\label{E:DpM=GammaD}
	D_\pM\ = \ \Gamma_\pM\circ \Big(\,\n^V+(\n^V)^*\,\Big).
\end{equation}

Since $D_\pM$ preserves the parity of differential forms,
\begin{equation}\label{E:Dpm=+-}
	D_\pM\ = \
	\begin{pmatrix}
	D_\pM^+&0\\0&D_\pM^-
	\end{pmatrix},
\end{equation}
where $D_\pM^+:\Omega^\even(\pM,V)\to \Omega^\even(\pM,V)$ and $D_\pM^-:\Omega^\odd(\pM,V)\to \Omega^\odd(\pM,V)$. We note that
\[
	\Gamma_\pM\circ D_\pM^+\circ \Gamma_\pM\ = \ D_\pM^-.
\]
In particular, $D_\pM^+$ and $D_\pM^-$ have the same spectrum. It follows that
\begin{equation}\label{E:eta Dpm}
 \begin{aligned}
	\eta_X(D_\pM) \ &= \ 2\, \eta_X(D_\pM^+) \ = \ 2\,\eta_X(D_\pM^-),\\
	h_\ell(D_\pM) \ &= \ 2\, h_\ell(D_\pM^+) \ = \ 2\, h_\ell(D_\pM^-),
	\qquad \ell\in G,
 \end{aligned}
\end{equation}
where $\eta_X$ stands for Goette's infinitesimal eta-invariant \eqref{E:inf eta} and $h_\ell(B):= \Tr\big(\ell\big|_{\ker B}\big)$.

\subsection{An identification of $E^+$ and $\Lambda^\b(T^*\pM)\otimes{}V$}\label{SS:E^+=Lambda}

Let $e_1,\ldots,e_n$ be an adapted orthonormal basis of $TM\big|_\pM$, constructed as in Subsection~\ref{SS:connection on product}.  Then
\begin{equation}\label{E:Lam on pM}
	\Lambda^\b(T^*M)\big|_\pM \ = \
	\Lambda^\b(T^*\pM)\,\oplus\, \Big(\, e^n\wedge\Lambda^\b(T^*\pM)\,\Big).
\end{equation}
Using this decomposition we identify $\Lambda^\b(T^*\pM)\otimes{}V$ with a subspace of $E= \Lambda^\b(T^*M)\big|_\pM\otimes{}V$. For $\omega\in \Lambda^\b(T^*\pM)$  we have $(1+\Gamma_M)\omega\in E^+$. Thus
\begin{equation}\label{E:1+Gamma}
	1+\Gamma_M:\,\Lambda^\b(T^*\pM)\otimes{}V \ \longrightarrow \ E^+.
\end{equation}
Moreover, using \eqref{E:Gamma=Gamma} we obtain
\begin{equation}\label{E:1+Gamma omega}
	\big(\,1+\Gamma_M\,\big)\,\omega\ = \ \omega\ - \ c(e^n)\cdot \Gamma_\pM\,\omega
	\ = \ \omega\ - \ e^n\wedge \Gamma_\pM\,\omega.
\end{equation}
Let
\[
	\Pi:\, E=\Lambda^\b(T^*M)\big|_\pM\otimes V \ \longrightarrow \ \Lambda^\b(T^*\pM)\otimes V
\]
denote the projection onto the first summand of \eqref{E:Lam on pM} and let $\Pi^+$ denote the restriction of $\Pi$ to $E^+$. It follows from \eqref{E:1+Gamma omega} that $\Pi^+$ is the inverse of \eqref{E:1+Gamma}. Thus \eqref{E:1+Gamma} is an isomorphism of bundles which we use to identify $E^+$ with $\Lambda^\b(T^*\pM)\otimes{}V$. Via this identification $D_\pM$ induces an operator
\begin{equation}\label{E:tilD}
	\tilD_\pM\ := \ \big(\,1+\Gamma_M\,\big)\circ D_\pM\circ \Pi^+:\,
	C^\infty(\pM,E^+)\ \to \ C^\infty(\pM,E^+).
\end{equation}

We use the operator $\tilD_\pM$ to define the boundary conditions for $D$. It is shown below that this is a generalized APS boundary condition, cf. Subsection~\ref{SS:APS}. In particular, the equivariant index $\indl(D,\tilD_\pM)$ ($\ell\in G$) is well defined.

From \eqref{E:tilD} we conclude that
\begin{equation}\label{E:D=tildeD}
	\eta_X(\tilD_\pM)\ = \ \eta_X(D_\pM), \qquad
	h_\ell(\tilD_\pM)\ = \ h_\ell(D_\pM),
	\qquad\text{for all}\quad X\in\grg, \ \ \ell\in G.
\end{equation}

\subsection{The product case}\label{SS:product signature}
As in \eqref{E:boundary form} the restriction of the  signature operator $D$ to the boundary can be decomposed as
\begin{equation}\label{E:signature boundary}
	D\big|_U\ = \ c(e^n)\,\left(\,\frac{\p}{\p u} \ + \ B(u)\,\right),
\end{equation}
with
\begin{equation}\label{E:B signature}
	B(0) \ = \ -\,\sum_{i=1}^{n-1}\,c(e^n)\,c(e^i)\,\n^{E^+}_{e_i},
\end{equation}
where, as usual,
\[
	\n^E\ := \ \n^{\Lambda^\b T^*M}\big|_{\pM}\otimes 1\ + \ 1\otimes \n^V
\]
denotes the restriction of $\n^\E$ to the boundary.

\begin{lemma}\label{L:B=DpM}
Suppose the metric $g^M$ is product near the boundary. Then
\begin{equation}\label{E:B(0)=DpM}
	B(0)\ = \ \tilD_\pM.
\end{equation}
\end{lemma}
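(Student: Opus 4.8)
The plan is to compute $B(0)$ explicitly under the product assumption and match it term by term with $\tilD_\pM$ as defined in \eqref{E:tilD}. First I would use the fact that when $g^M$ is a product near the boundary, the restriction $\n^{\Lambda^\b T^*M}|_{\pM}$ of the Levi-Civita connection on forms coincides with the Levi-Civita connection $\n^{\Lambda^\b T^*\pM}$ on $\pM$ (acting on the tangential part) together with the trivial connection in the $e^n$-direction; equivalently the second fundamental form $L_{ij}$ vanishes, so in \eqref{E:B signature} the connection $\n^{E^+}$ is the naive restriction and commutes appropriately with $c(e^n)$. This reduces the identity to a purely algebraic computation on the boundary Clifford module, independent of $u$.

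Next I would unwind the definition $\tilD_\pM = (1+\Gamma_M)\circ D_\pM\circ \Pi^+$, where $D_\pM = \Gamma_\pM\circ\n^V + \n^V\circ\Gamma_\pM$ acts on $\Omega^\b(\pM,V)$, and where $\Pi^+$ is the projection from $E^+$ onto $\Lambda^\b(T^*\pM)\otimes V$ that inverts \eqref{E:1+Gamma}. The key relations I would invoke are $\Gamma_M = \Gamma_\pM c(e^n) = -c(e^n)\Gamma_\pM$ from \eqref{E:Gamma=Gamma}, the identity \eqref{E:chirality 2} expressing the chirality operators as products of Clifford generators, and the formula $(\n^V)^* = \Gamma_\pM\circ\n^V\circ\Gamma_\pM$ from \cite[Proposition~3.58(3)]{bgv}, so that $D_\pM = \Gamma_\pM(\n^V + (\n^V)^*)$. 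Writing $D_\pM$ as a Dirac-type operator on $\pM$ in a local adapted frame gives $\n^V + (\n^V)^* = \sum_{i=1}^{n-1} \oc(e^i)\n^{E}_{e_i}$ for the boundary Clifford action $\oc$, hence $D_\pM = \Gamma_\pM\sum_{i=1}^{n-1}\oc(e^i)\n^E_{e_i}$; I would then relate $\Gamma_\pM\oc(e^i)$ on the boundary module to $-c(e^n)c(e^i)$ on $E^+$ using \eqref{E:Gamma=Gamma} and the identification \eqref{E:1+Gamma}.

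The main step is checking that conjugating by $(1+\Gamma_M)$ and $\Pi^+$ turns $\Gamma_\pM\oc(e^i)$ into exactly $-c(e^n)c(e^i)$ acting on $E^+$. For this I would use that $1+\Gamma_M$ intertwines the Clifford action $c(e^i)$ restricted to tangential directions with $\oc(e^i)$ up to sign, track how $\Gamma_\pM$ and $c(e^n)$ interact under the decomposition \eqref{E:Lam on pM}, and verify the signs carefully; the power of $i$ built into $\Gamma_\pM$ versus $\Gamma_M$ in \eqref{E:chirality} and the anticommutation $c(e^n)c(e^i) = -c(e^i)c(e^n)$ are where sign errors are most likely. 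Comparing with \eqref{E:B signature}, which reads $B(0) = -\sum_{i=1}^{n-1} c(e^n)c(e^i)\n^{E^+}_{e_i}$, then gives \eqref{E:B(0)=DpM}.

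The hard part will be bookkeeping the several sign and factor-of-$i$ conventions simultaneously — the chirality normalizations, the orientation of $e_n$ as inward normal, and the intertwining map $1+\Gamma_M$ — rather than any conceptual obstacle; essentially the lemma is the statement that in the product case the geometric boundary Dirac operator \emph{is} the operator induced by the signature operator's restriction, and the content is purely in the normalizations. A convenient way to organize the computation is to work in the local adapted frame fixed in Subsection~\ref{SS:connection on product}, where all the Clifford generators are parallel and $L_{ij}=0$, reducing everything to a fiberwise identity that can be checked on the finite-dimensional Clifford module.
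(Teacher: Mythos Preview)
Your proposal is correct and follows essentially the same approach as the paper. The paper computes $\Pi\circ B(0)\circ(1+\Gamma_M)$ and shows it equals $D_\pM$ (using exactly the ingredients you list: the product assumption to identify $\n^{E^+}$ with the boundary Levi-Civita connection, Proposition~3.53 of \cite{bgv} to write $\sum_i c(e^i)\n^{E^+}_{e_i}=\n^V+(\n^V)^*$, the relation $\Gamma_M=-c(e^n)\Gamma_\pM$, and the observation that $\Pi$ kills the $c(e^n)$-contribution while retaining the $\Gamma_\pM$-contribution), which is the same computation you outline, only run in the opposite direction.
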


\begin{proof}
Since $\Gamma_M$ commutes with $\n^\E$ and $c(e^i)$ ($i=1,\ldots,n$), we conclude from \eqref{E:B signature} and \eqref{E:Gamma=Gamma} that
\begin{multline}\notag
	B\circ(1+\Gamma_M)\ = \
	-\,(1+\Gamma_M)\circ c(e^n)\cdot
	\left(\, \sum_{i=1}^{n-1}\,c(e^i)\,\n^{E^+}_{e_i}\,\right)
	\\ = \ -\,\big(\,c(e^n)-\Gamma_\pM\,\big)\circ
	\left(\, \sum_{i=1}^{n-1}\,c(e^i)\,\n^{E^+}_{e_i}\,\right).
\end{multline}

Since the metric $g^M$ is a product near the boundary, the subbundle $\Lambda^\b(T^*\pM)\otimes V\subset E$ is equal to $E^+$ and the connection $\n^{E^+}$ is equal to the product of the Levi-Civita connection $\n^{\Lambda^\b(T^*\pM)}$ on $\Lambda^\b(T^*\pM)$ and $\n^V$.
Hence, by \cite[Proposition~3.53]{bgv},
\begin{equation}\label{E:nV+nV*=DpM}
		\sum_{i=1}^{n-1}\,c(e^i)\,\n^{E^+}_{e_i} \ = \
		\n^V\ + \ \big(\,\n^V\,\big)^*.
\end{equation}

For $\omega\in \Omega^\b(\pM,V)$ we have  $\sum_{i=1}^{n-1}\,c(e^i)\n^{E^+}_{e_i}\omega\in \Omega^\b(\pM,V)$. Hence,
\[
	\Pi\circ B\circ (1+\Gamma_M)\,\omega \ = \ \Gamma_\pM\circ
	\sum_{i=1}^{n-1}\,c(e^i)\,\n^{E^+}_{e_i}\,\omega.
\]
The lemma follows now from \eqref{E:nV+nV*=DpM} and \eqref{E:DpM=GammaD}.
\end{proof}

\subsection{The index formula for the  twisted signature operator}\label{SS:index signature}
The main purpose of this section is to prove the following special case of Theorem~\ref{T:genral APS}.

\begin{theorem}\label{T:sign=}
Let $g^M_0$ be a $G$-invariant metric on $M$ which is a product near the boundary and coincides with $g^M$ on $\pM$. Let $L_\grg(g^M)(X)$ be the equivariant $L$-form defined in \eqref{E:L form}. Let $\n^{TM}_t$ be a smooth family of $G$-equivariant connections on $TM$ such that $\n^{TM}_i$ ($i=0,1$) is the Levi-Civita connection of the metric $g^M_i$.%
\footnote{Note that we do not assume that $\n^\E_t$ is a Levi-Civita connection of some metric for $t\not=0,1$.}
Let $TL_\grg(g^M,g_0^M)(X)$ be  the corresponding equivariant transgression form of the $L_\grg$-form described in Subsection~\ref{eq-trans}.
If $X\in \grg$  is sufficiently small and has no zeros on $\pM$, then
\begin{multline}\label{E:sign=}
	\ind\big(D,\tilD_\pM\big) \ = \
	(\pi i)^{-n/2}\,
	\Big[\,\int_M L_\grg(g^M)(X)\cdot \ch_\grg(\n^\calV)(X)\\
	 - \ \int_{\pM} TL_\grg(g^M,g^M_0)(X)\cdot \ch_\grg(\n^\calV)(X)\,\Big]
	\ -\ \eta_X(D_\pM^+) \ - \ h_{e^{-X}}(D_\pM^+),
\end{multline}
where
\[
	h_{e^{-X}}(D_\pM^+)\ :=\ \Tr\big(\,e^{-X}\big|_{\ker D_\pM^+}\,\big).
\]

\end{theorem}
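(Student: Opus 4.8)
The plan is to recognize $D$ as the Dirac operator of the Dirac bundle $\E=\Lambda^\b T^*M\otimes\calV$ described in Section~\ref{S:signature}, verify that $\tilD_\pM$ is an admissible boundary operator, apply Theorem~\ref{T:genral APS}, and then rewrite each of its three terms in the desired form. For the admissibility I would fix any $G$-invariant metric $g^M_0$ that is a product near $\pM$ and equals $g^M$ on $\pM$; since $\tilD_\pM$ depends only on $g^M\big|_\pM=g^M_0\big|_\pM$, Lemma~\ref{L:B=DpM} applied to $g^M_0$ gives $\tilD_\pM=B_0(0)=-\sum_{i=1}^{n-1}c(e^n)c(e^i)\tiln^{E^+}_{e_i}$, with $\tiln^E:=\n^{\Lambda^\b T^*\pM}\otimes 1+1\otimes\n^V$ built from Levi--Civita connections and therefore compatible with $c\big|_\pM$. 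Hence $\tilD_\pM$ is admissible in the sense of Definition~\ref{D:admissible bc}, defined by $\tiln^E$, and one may take the admissible deformation to be the one induced by an admissible deformation $g^M_t$ of $g^M$ ending at $g^M_0$, together with the family of connections on $\Lambda^\b T^*M$ associated to $\n^{TM}_t$ and the fixed $\n^\calV$; then $\n^\E_0\big|_\pM=\tiln^E$.

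The heart of the argument is the computation of the integrand. Locally $M$ is spin and $\Lambda^\b T^*M\otimes\CC\cong\calS\otimes\calS^*$ as graded Clifford modules, with the Clifford action on the first factor and the chirality operator $\Gamma_M$ corresponding to the $\ZZ_2$-grading of $\calS$; thus the twisting bundle of $\E$ is $\calS^*\otimes\calV$, and by \eqref{E:chE/S=chW} we get $\ch_\grg(\E/\calS)(X)=\ch_\grg(\calS^*)(X)\cdot\ch_\grg(\n^\calV)(X)$. A direct Chern-root computation with $R_\grg(X)$ (cf. \cite{bgv}) then gives the pointwise identity of equivariant forms $\hat{A}_\grg(g^M)(X)\cdot\ch_\grg(\calS^*)(X)=2^{n/2}L_\grg(g^M)(X)$, so that, since $(2\pi i)^{-n/2}2^{n/2}=(\pi i)^{-n/2}$, one has $(2\pi i)^{-n/2}\al_\grg(\nab^\EE)(X)=(\pi i)^{-n/2}L_\grg(g^M)(X)\ch_\grg(\n^\calV)(X)$, and likewise at $t=0$ with $g^M_0$ in place of $g^M$.

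To handle the boundary term I would use Stokes' theorem twice. Since $X_M$ is tangent to $\pM$, the identity $\int_\pM T\al_\grg(\nab^\EE,\n_0^{\EE})(X)=\int_M[\al_\grg(\nab^\EE)(X)-\al_\grg(\n_0^{\EE})(X)]$ holds (it is exactly what is used in the proof of Theorem~\ref{T:genral APS}); combining it with the previous paragraph and with Stokes applied to the transgression of the $L$-form along $\n^{TM}_t$ — using that $\ch_\grg(\n^\calV)$ is $d_\grg$-closed and undeformed — yields $(2\pi i)^{-n/2}\int_\pM T\al_\grg=(\pi i)^{-n/2}\int_\pM TL_\grg(g^M,g^M_0)(X)\ch_\grg(\n^\calV)(X)$, which is in particular independent of the choice of $g^M_0$ and of the interpolating family. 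For the eta term, \eqref{E:D=tildeD} and \eqref{E:eta Dpm} give $\eta_X(\tilD_\pM)=\eta_X(D_\pM)=2\eta_X(D_\pM^+)$ and $h_{e^{-X}}(\tilD_\pM)=h_{e^{-X}}(D_\pM)=2h_{e^{-X}}(D_\pM^+)$, hence $\tilde{\eta}_X(\tilD_\pM)=\eta_X(D_\pM^+)+h_{e^{-X}}(D_\pM^+)$. Substituting these three evaluations into \eqref{E:general APS} produces \eqref{E:sign=}.

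I expect the main obstacle to be the identity $\hat{A}_\grg(g^M)(X)\,\ch_\grg(\E/\calS)(X)=2^{n/2}L_\grg(g^M)(X)\,\ch_\grg(\n^\calV)(X)$ at the level of equivariant characteristic forms: this needs the local identification $\Lambda^\b T^*M\cong\calS\otimes\calS^*$ compatibly with the chirality grading and with the moment of $X$, together with careful bookkeeping of the powers of $2$ and of $\pi i$. Everything else reduces to Stokes' theorem plus the already-established Lemma~\ref{L:B=DpM} and equations \eqref{E:D=tildeD}, \eqref{E:eta Dpm}.
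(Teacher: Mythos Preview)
Your overall strategy matches the paper's: verify admissibility of $\tilD_\pM$ via Lemma~\ref{L:B=DpM}, apply Theorem~\ref{T:genral APS}, use the identity $\hat{A}_\grg\cdot\ch_\grg(\E/\calS)=2^{n/2}L_\grg\cdot\ch_\grg(\n^\calV)$ (which the paper simply cites from \cite[p.~150]{bgv}), and finish with \eqref{E:D=tildeD} and \eqref{E:eta Dpm}. Your ``Stokes twice'' reduction of the boundary term to the $TL_\grg$ integral is a legitimate alternative to the paper's comparison of $d_\grg$'s.

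There is, however, a genuine gap in your construction of the admissible deformation. You propose to use the connections on $\Lambda^\b T^*M$ induced by the \emph{given} family $\n^{TM}_t$, together with the \emph{fixed} $\n^\calV$. Neither choice works. First, $\n^{TM}_t$ is only assumed to be Levi--Civita at $t=0,1$; for intermediate $t$ the induced connection on $\Lambda^\b T^*M$ need not satisfy the Clifford compatibility \eqref{E:Clifford connection} with respect to any $c_t$, so you do not get a family of Dirac bundles in the sense of Definition~\ref{D:family Dirac bundles}. Second, unless $\n^\calV$ happens already to be a product near $\pM$, keeping it fixed means $\n^\E_0$ is not a product near the boundary, violating condition~(iii) of Definition~\ref{D:admissible deformation}; hence Theorem~\ref{T:genral APS} does not apply.

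The paper repairs both points by building the deformation from the Levi--Civita connections of the admissible family $g^M_t=(1-t)g^M_0+tg^M$ on $\Lambda^\b T^*M$, \emph{and} by deforming $\n^\calV$ to a family $\n^\calV_t$ that is a product near $\pM$ at $t=0$ while satisfying $\n^\calV_t\big|_\pM=\n^V$ for all $t$. This last condition is precisely what makes your Stokes computation go through: at $t=0$ one has $\alpha_\grg(\n^\E_0)=2^{n/2}L_\grg(g^M_0)\,\ch_\grg(\n^\calV_0)$, not $\ch_\grg(\n^\calV)$, and the discrepancy $\int_M L_\grg(g^M_0)\big[\ch_\grg(\n^\calV)-\ch_\grg(\n^\calV_0)\big]$ vanishes only because $T\ch_\grg(\n^\calV,\n^\calV_0)$ restricts to zero on $\pM$. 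Once you insert this deformation of $\n^\calV$ (and use the Levi--Civita family for the Clifford side, bringing in the given $\n^{TM}_t$ only afterwards to define $TL_\grg$), your argument is complete.
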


\begin{proof}
Consider the admissible deformation $g_t^M= (1-t)g^M_0+tg^M$ of $g^M$ (cf. Definition~\ref{D:admissible gMt}). Let $c_t:T^*M\to \End(\Lambda^\b(T^*M))$ and $\n^{\Lambda^\b(T^*M)}_t$ be the corresponding families of Clifford actions and Levi-Civita connections on $\Lambda^\b(T^*M)$. By a slight abuse of notation we also denote by $c_t$ the induced action of $T^*M$ on $\E= \Lambda^\b(T^*M)\otimes\calV$. Let $\n^\calV_t$ be a family of Hermitian connections on $\calV$ such that $\n^\calV_1= \n^\calV$, the restriction of $\n^\calV_t$ to $M\backslash{}U\sqcup\big(\pM\times(-\infty,3]\big)$ is equal to $\n^\calV$, the restriction of $\n^\calV_t$ to $\pM$ is equal to $\n^V:= \n^\calV\big|_\pM$ and the connection $\n^\calV_0$ is product near the boundary. Note that the restriction of $\n^\calV_t$ to the boundary is independent of $t$. In particular,
\begin{equation}\label{E:chV=chV0}
	\ch_\grg(\n^\calV_t)(X)\big|_\pM\ = \ \ch_\grg(\n^\calV_0)(X)\big|_\pM,
	\qquad\text{for all}\quad t\in [0,1], \ \ X\in \grg.
\end{equation}

Let
\begin{equation}\label{E:ntE=}
		\n_t^\E\ := \ \n^{\Lambda^\b(T^*M)}_t\otimes 1 \ + \ 1\otimes \n^\calV_t.
\end{equation}
This is a Clifford connection with respect to the Clifford action $c_t$.  One easily checks that the family of Dirac bundles $(\E,c_t,\n^\E_t)$ is an admissible deformation of $(\E,c,\n^\E)$ (cf. Definition~\ref{D:admissible deformation}) and
\[
		\tiln^E\ := \ \n^E_0 \ = \
		\n^{\Lambda^\b(T^*M)}_0\big|_\pM\otimes 1 \ + \ 1\otimes \n^V.
\]

Recall from Subsection~\ref{SS:properties gMa} that the metric $g^M_0$ is product near the boundary. Hence, by Lemma~\ref{L:B=DpM},
\[
	\tilD_\pM \ = \ -\,\sum_{i=1}^{n-1}\,c_0(e^n)\,c_0(e^i)\,\n^{E^+}_{0,e_i}
	\ = \ -\,\sum_{i=1}^{n-1}\,c_0(e^n)\,c_0(e^i)\,\tiln^{E^+}_{e_i}.
\]
Comparing  with \eqref{E:admissible ApM} we conclude that $\tilD_\pM$  is an admissible boundary operator defined by the boundary connection $\tiln^E$ (cf. Definition~\ref{D:admissible bc}). Therefore we can apply Theorem~\ref{T:genral APS} to compute $\ind(D_1,\tilD_\pM)$. Let $\alpha_\grg$ be as in this theorem.   By \cite[p.~150]{bgv}
\begin{equation}\label{E:L=Ac}
	 \alpha_\grg(\n^\E)(X)\ = \
	 2^{n/2}\, L_\grg(g^M)(X)\cdot \ch_\grg(\n^\calV)(X).
\end{equation}
Hence, using \eqref{E:ntE=} and \eqref{E:chV=chV0}, we conclude that on $\pM$
\begin{multline*}
	d_\grg\, T\alpha_\grg(\nab^{\EE},\n_0^{\EE})(X)\ = \
	\alpha_\grg(\nab^{\EE})(X)-\alpha_\grg(\n_0^{\EE})(X)
	\\ = \
	2^{n/2}\, L_\grg(g^M)(X)\cdot \ch_\grg(\n^\calV)(X)
	      \ - \ 2^{n/2}\, L_\grg(g^M_0)(X)\cdot \ch_\grg(\n^\calV)(X).
	\\ = \
	2^{n/2}\,\Big[\,L_\grg(g^M)(X)-L_\grg(g_0^M)\,\Big]\cdot
			\ch_\grg(\n^\calV)(X).
\end{multline*}

Notice that so far we did not use the given family $\n^{TM}_t$ of connections on $TM$. We now consider this family and use \eqref{E:transgression1} to  obtain
\[
	d_\grg T\alpha_\grg(\nab^{\EE},\n_0^{\EE})(X) \ = \
	2^{n/2}\, d_\grg\, TL_\grg(g^M,g^M_0)(X)\cdot \ch_\grg(\n^\calV)(X).
\]
The equality \eqref{E:sign=} follows now from Theorem~\ref{T:genral APS}, \eqref{E:L=Ac}, \eqref{E:D=tildeD} and \eqref{E:eta Dpm}.
\end{proof}

\subsection{The equivariant twisted signature}\label{SS:signature}
We now assume that the connection $\n^\calV$ is flat. Recall that we assume that $\calV$ is endowed with a flat $G$-equivariant Hermitian metric. For the product case Atiyah-Patodi-Singer \cite{aps2} and Donnelly \cite{don} showed that $\indl(D,\tilD_\pM)$ computes the twisted equivariant signature $\sign(l,M,\calV)$. We now use this result and Theorems~\ref{T:stability of the index} and \ref{T:sign=} to compute  $\sign(e^{-X},M,\calV)$ in the non-product case.

Let $H^\b(M,\calV)$ and $H^\b(M,\pM,\calV)$ denote the  absolute and relative cohomology of $M$ with coefficients in $\calV$. We denote by $\hat{H}^\b(M,\calV)$ the image of  $H^\b(M,\pM,\calV)$ in $H^\b(M,\calV)$. The cup product and the fiber metric on $\calV$ define a quadratic form  $Q:\hat{H}^m(M,\calV)\otimes{}\hat{H}^m(M,\calV)\to \RR$, cf. \cite[\S2]{aps2}.  We can decompose $\hat{H}^m(M,\calV)$ as
\[
	\hat{H}^m(M,\calV)\ = \ H_+\oplus H_-,
\]
where $H_+$ and $H_-$ are $Q$-orthogonal $G$-invariant subspaces such that $Q$ is positive definite on $H_+$ and negative definite on $H_-$.

\begin{Definition}\label{D:eq signature}
The {\em equivariant signature} of $\calV$ is defined by (cf. \cite{Hirzebruch71})
\begin{equation}\label{E:eq signature}
	\sign(\ell,M,\calV) \ := \ \Tr\big(\,\ell|_{H_+}\,\big) \ - \ \Tr\big(\,\ell|_{H_-}\,\big),
	\qquad \ell\in G.
\end{equation}
In the case when $\calV=\CC$ is the trivial line bundle, we set
\[
	\sign(\ell,M)\ := \ \sign(\ell,M,\CC),
\]
and refer to  it as the {\em (untwisted) equivariant signature of $M$}.
\end{Definition}

\begin{Corollary}\label{C:sign=}
In the situation of Theorem~\ref{T:sign=} assume that $\n^\calV$ is flat. Then
\begin{multline}\label{E:sign(V)=}
	\sign\big(\,e^{-X},M,\calV\,\big) \ = \
	(\pi i)^{-n/2}\,
	\Big[\,\int_M L_\grg(g^M)(X)\cdot \ch_\grg(\n^\calV)(X)\\
	 - \ \int_{\pM} TL_\grg(g^M,g^M_0)(X)\cdot \ch_\grg(\n^\calV)(X)\,\Big]
	\ -\ \eta_X(D_\pM^+).
\end{multline}
\end{Corollary}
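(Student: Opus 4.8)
The plan is to deduce Corollary~\ref{C:sign=} from Theorem~\ref{T:sign=} together with the classical identification, due to Atiyah--Patodi--Singer \cite{aps2} and Donnelly \cite{don}, of the equivariant index of the APS-type boundary problem for the twisted signature operator with the equivariant signature of the flat bundle $(\calV,\n^\calV)$. When $\n^\calV$ is flat, $G$ acts on $H^\b(M,\calV)$ and preserves the form $Q$ on $\hat H^m(M,\calV)$ (which involves only the flat fiber metric), so $\sign(\ell,M,\calV)$ of Definition~\ref{D:eq signature} is well defined and is a topological invariant of the pair $(M,\calV)$, independent of $g^M$. The input from \cite{aps2,don} is that, \emph{when $g^M$ is a product near $\pM$},
\[
	\sign(\ell,M,\calV)\ = \ \indl\big(D,\tilD_\pM\big)\ +\ h_\ell(D_\pM^+),
	\qquad h_\ell(D_\pM^+):=\Tr\big(\ell|_{\ker D_\pM^+}\big),
\]
which is formula~(2.4) of \cite{don}, cf.\ \eqref{E:Gsgn}; here Lemma~\ref{L:B=DpM} and \eqref{E:D=tildeD} match the boundary operator $\tilD_\pM$ with the odd signature operator of \cite{aps2,don}, and the correction is the equivariant dimension of the even-degree $V$-valued harmonic forms on $\pM$, $V:=\calV|_\pM$.

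I would then transfer this identity to the given, non-product, metric $g^M$ by the deformation used in the proof of Theorem~\ref{T:sign=}. Along the admissible family $g^M_t=(1-t)g^M_0+tg^M$ the quantity $\sign(\ell,M,\calV)$ does not change (it is a topological invariant of $(M,\calV)$), the correction $h_\ell(D_\pM^+)$ does not change (it depends only on $g^M|_\pM=g^M_0|_\pM$ and on $\n^V$), and $\indl(D,\tilD_\pM)=\indl(D_0,\tilD_\pM)$ by Theorem~\ref{T:stability of the index}. Since the displayed identity holds for the product metric $g^M_0$, it therefore holds for $g^M$.

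Finally, I would take $\ell=e^{-X}$ and insert the value of $\ind(D,\tilD_\pM)$ given by Theorem~\ref{T:sign=} into this identity. The term $-h_{e^{-X}}(D_\pM^+)$ there is cancelled exactly by the $+h_{e^{-X}}(D_\pM^+)$ correction, and the remaining equality is precisely \eqref{E:sign(V)=}.

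The one substantive step is the first: the index--signature relation with the correct harmonic-forms correction in the equivariant category. This needs the bookkeeping of \cite[\S\S2--4]{aps2} and \cite[\S2]{don} for the limiting values of extended $L^2$-harmonic forms on $\pM$ and for the even/odd-degree splitting, so that the correction is exactly $\Tr(\ell|_{\ker D_\pM^+})$; once this is granted, the rest is formal given Theorems~\ref{T:stability of the index} and \ref{T:sign=}.
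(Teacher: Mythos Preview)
Your proposal is correct and follows essentially the same approach as the paper: invoke the Donnelly/APS identity \eqref{E:Gsgn} in the product case, transfer it to the non-product metric via Theorem~\ref{T:stability of the index} (the signature and $h_\ell(D_\pM^+)$ being unchanged along the deformation), then substitute the index formula of Theorem~\ref{T:sign=} and cancel the $h_{e^{-X}}(D_\pM^+)$ terms. The paper's own proof is slightly terser but identical in substance.
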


\begin{proof}
If all the structures are product near the boundary, then
\begin{equation}\label{E:Gsgn}
	\sign(\ell,M,\calV)\ = \ \indl(D,\tilD_\pM)\ +\ h_\ell(\tilD_\pM^+),
\end{equation}
by formula~(2.4) in \cite{don} (see also \cite[Theorem~2.2]{aps2} for the non-equivariant case). By Theorem~\ref{T:stability of the index} the same formula holds in the non-product case. Equality \eqref{E:sign(V)=} follows now from Theorem~\ref{T:sign=}.
\end{proof}

\subsection{The untwisted signature operator}\label{SS:untwistedsignature}
Consider now the case when $\calV=\CC$ is the trivial line bundle. Then
\[
	D\ = \ d\ + \ d^*:\, \Omega^\b(M)\ \to \ \Omega^\b(M),
\]
and $H^\b(M,\calV)= H^\b(M)$ is the cohomology of $M$ with complex coefficients. Since the map $e^{-X}:M\to M$ is homotopic to the identity it acts trivially on the cohomology $H^\b(M)$. Hence, the equivariant signature \eqref{E:eq signature} is independent of $X$ and is equal to the ordinary signature
\[
	\sign(M)\ := \ \dim H^+ \ -  \ \dim H^-\ \in \ \ZZ.
\]	
It follows that the right hand side of \eqref{E:sign(V)=} in this case is also independent of $X$.

We now change our point of view and use \eqref{E:sign(V)=} to compute the equivariant infinitesimal eta-invariant. More precisely, let
\[
	D_\pM\ := \ \Gamma_\pM\circ d\ + \ d\circ \Gamma_\pM
\]
and let $D_\pM^+:= D_\pM\big|_{\Omega^{\even}(M)}$.

From Corollary~\ref{C:sign=} we now obtain the following
\begin{Corollary}\label{C:signuntwisted}
In the situation of Theorem~\ref{T:sign=} assume that $\calV=\CC$. Then
\begin{equation}\label{E:signuntwisted}
	\eta_X(D_\pM^+) \ = \
	(\pi i)^{-n/2}\,
	\Big[\,\int_M L_\grg(g^M)(X) \ - \
	 \int_{\pM} TL_\grg(g^M,g^M_0)(X)\,\Big] \ - \ \sign(M).
\end{equation}
\end{Corollary}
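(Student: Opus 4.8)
The plan is to deduce Corollary~\ref{C:signuntwisted} directly from Corollary~\ref{C:sign=} by specializing to the trivial flat bundle $\calV=\CC$ with its standard Hermitian metric and trivial connection $\n^\calV$. First I would observe that in this case the equivariant Chern form is trivial, $\ch_\grg(\n^\calV)(X)=1$, since the curvature $F^\calV$ vanishes and, because the $G$-action on the trivial bundle can be taken to be trivial on fibers, the moment $\mu^\calV(X)$ vanishes as well; hence $F^\calV_\grg(X)=0$ and $\Str\exp(-F^\calV_\grg(X))=1$. Substituting $\ch_\grg(\n^\calV)(X)=1$ into \eqref{E:sign(V)=} collapses the two integrands to $L_\grg(g^M)(X)$ and $TL_\grg(g^M,g^M_0)(X)$ respectively.

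Next I would handle the left-hand side. Since $\calV=\CC$, the twisted equivariant signature $\sign(e^{-X},M,\calV)$ is just the equivariant signature $\sign(e^{-X},M)$ as in Definition~\ref{D:eq signature}. As observed in Subsection~\ref{SS:untwistedsignature}, the diffeomorphism $e^{-X}:M\to M$ is homotopic (indeed isotopic, through the flow $e^{-tX}$) to the identity, so it acts trivially on $H^\b(M)$; therefore it acts trivially on the subspaces $H_\pm$ of $\hat H^m(M)$, giving $\Tr(e^{-X}|_{H_+})=\dim H_+$ and $\Tr(e^{-X}|_{H_-})=\dim H_-$, whence $\sign(e^{-X},M)=\dim H_+-\dim H_-=\sign(M)$, an $X$-independent integer.

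Finally I would simply rearrange \eqref{E:sign(V)=} with these substitutions: from
\[
	\sign(M)\ = \ (\pi i)^{-n/2}\Big[\int_M L_\grg(g^M)(X)\ -\ \int_\pM TL_\grg(g^M,g^M_0)(X)\Big]\ -\ \eta_X(D_\pM^+)
\]
one solves for $\eta_X(D_\pM^+)$ to obtain exactly \eqref{E:signuntwisted}. Here $D_\pM^+$ is the restriction to $\Omega^{\even}(\pM)$ of $D_\pM=\Gamma_\pM\circ d+d\circ\Gamma_\pM$, which is the $\calV=\CC$ case of the odd signature operator of Definition~\ref{D:odd signature}. There is essentially no obstacle: the only points requiring a sentence of justification are the vanishing of the equivariant curvature of the trivial bundle (so that the relative Chern form is $1$) and the homotopy-invariance argument showing the equivariant signature reduces to $\sign(M)$; both are routine. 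One minor bookkeeping remark worth including is that $\eta_X(D_\pM^+)$ here is half of $\eta_X(D_\pM)$ by \eqref{E:eta Dpm}, consistent with the conventions of Corollary~\ref{C:sign=}, so the statement is self-consistent as written.
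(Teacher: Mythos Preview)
Your proof is correct and follows essentially the same approach as the paper: specialize Corollary~\ref{C:sign=} to the trivial flat line bundle, use that $\ch_\grg(\n^\calV)(X)=1$, and invoke the observation from Subsection~\ref{SS:untwistedsignature} that $e^{-X}$ acts trivially on $H^\b(M)$ so the equivariant signature reduces to $\sign(M)$. You have simply supplied a bit more detail than the paper, which states the corollary as an immediate consequence of Corollary~\ref{C:sign=}.
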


Thus, modulo the integer correction term $\sign(M)$ the equivariant infinitesimal eta-invariant $\eta_X(D_\pM^+)$ is computed as a sum of two integrals. In the rest of this paper we compute the integrands of these integrals for a class of $4$-manifolds called SKR manifolds, \cite{dm1,dm2}.

\section{The degree three component of the transgression of the equivariant $L$-form}\label{S:signature 4D}
The transgression form $TL_\grg(g^M,g^M_0)$ is a sum of forms of different geometric degrees. If the dimension of $M$ is small it is possible to give a simple formula for the the components of each degree. The purpose of this section is to obtain formula \Ref{transg} for the degree three component of $TL_\grg(g^M,g^M_0)$. In the case when $\dim{}M=4$ this is all that we need for the computation of the
second term in \eqref{E:signuntwisted}. In the next section we apply this formula to get an explicit expression for  the degree three component of $TL_\grg(g^M,g^M_0)$ on four dimensional SKR manifolds.

\subsection{Notation}\label{SS:simple notation}
To make the  explicit formulas obtained in this and the next section more readable we use a slightly shorter notation than before. We set $g=g_1 := g^M$. Let $g_0:= g^M_0$ be a $G$-invariant Riemannian metric on $M$ which is a product near the boundary and such that $g_0\big|_\pM = g\big|_\pM$.

Let $\n^1$ and $\n^0$ denote the Levi-Civita connections on $TM$ of the metrics $g_0$ and $g= g_1$, respectively. Set
\begin{equation}\label{E:form theta}
	\Theta\ := \ \n^1\ - \ \n^0,
\end{equation}
and consider the family of connections
\begin{equation}\label{E:nt}
	\n^t\ := \ \n^0\ + \ t\,\Theta \ = \ (1-t)\,\n^0\ + \ t\,\n^1,
	\qquad 0\le t\le 1.
\end{equation}
Notice that, in general, $\n^t$ is not a Levi-Civita connection of any metric. However, one easily checks that $\n^t$ is a torsion free connection for all $t$. Therefore, from Subsections \ref{SS:moment} and \ref{SS:eq curvature} we conclude that the equivariant curvature $R_\grg^t(X)$ of the connection $\n^t$  takes the form
\begin{equation}\label{E:Rt}
	R^t_\grg(X)\ = \ R^t\ - \ \nab^t\!X,
\end{equation}
where $R^t$ stands for the usual (non-equivariant) curvature of $\n^t$.

\subsection{The equivariant $L$-form}\label{SS:Lform}
Set
\begin{equation}\label{E:function f}
	\tilde{f}(x)=\frac{x/2}{\tanh(x/2)} \qquad \text{and}\qquad
	f= \frac{\log(\tilde{f})}2,
\end{equation}
and let
\[
	L_\grg(\n^t)(X) \ = \ {\det}^{1/2}\big(\,\tilde{f}(R^t_\grg(X)\,\big)
	\ = \
	\exp\left(\Tr\big[f\big(R^t_\grg(X)\big)\big]\right),
	 \qquad X\in \grg,
\]
be the $L$-form associated to the connection $\n^t$, cf. \eqref{E:L form}.

Since $\frac{d\n^t_\grg(X)}{dt}=\Theta$, it follows from \eqref{E:transgression2} that the form
\begin{equation}\label{E:transgression L}
	TL_\grg(g,g_0)\ := \ \int_0^1\,
	\exp\left(\Tr\big[f\big(R_\grg^t(X)\big)\big]\right)\cdot
	\Tr\Big[\, \Theta\,f'\big(R_\grg^t(X)\big)\,\Big]\,dt
\end{equation}
is a transgression form for $L_\grg$ in the sense that
\[
	d_\grg(TL_\grg(g,g_0)(X)\ =\ L_\grg(\nab^1)(X)\ - \ L_\grg(\nab^0)(X).
\]

\subsection{The geometric degree of an equivariant differential form}\label{SS:geom degree}
Recall that an equivariant differential form $\omega\in \Omega_G(M,\E)\subset \Omega^\bullet(M,\E)[\grg]$ is said to be of {\em geometric degree $k$} if $\omega\in \Omega^k(M,\E)[\grg]$. We note that the geometric degree of $R^t$ is 2, the geometric degree of $\Theta$ is 1, and the geometric degree of $\n^tX$ is 0.

For $\omega\in \Omega_G(M,\E)$ we denote by $\omega_{[k]}$ the component of $\omega$ in $\Omega^k(M,\E)[\grg]$ and we refer to it as the {\em degree $k$ component of $\omega$}. For application to computation of the infinitesimal $\eta$-invariant using \eqref{E:signuntwisted} we only need to know the degree 3 component $TL(g,g_0)_{[3]}$ of the transgression form $TL_\grg(g,g_0)$.

Clearly, all forms of geometric degrees greater than $\dim M$ vanish. If $\dim{}M=4$, in Corollary~\ref{C:signuntwisted} we are only interested in restrictions of $TL_\grg(g,g_0)$ to the three-dimensional manifold $\pM$. Therefore, we say that two equivariant forms $\omega_1$ and $\omega_2$ are {\em equivalent modulo forms of geometric degree $>3$} and write $\omega_1\equiv \omega_2$, if $\omega_1-\omega_2$ is a sum of forms whose geometric degrees are $\ge4$. Clearly, if $\omega_1\equiv \omega_2$ then the restrictions of $\omega_1$ and $\omega_2$ to $\pM$ coincide.

\subsection{A non-comutative analogue of the second derivative}\label{SS:noncommutative derivative}
Consider the function
\[
	H_n(a,b)\ := \ \sum_{q=0}^{n-1}a^qb a^{n-1-q}
\]
of two non-commuting variables $a$ and $b$ and set
\begin{equation}\label{E:def of star}
	f^{[2]}(a)*b\ := \ \sum_{n=0}^\infty\fr 1{n!}f^{(n+1)}(0)\,H_n(a,b)
\end{equation}
Note that if $a$ and $b$ commute then $f^{[2]}(a)*b= f''(a)b$. In this sense, the first factor in \eqref{E:def of star} can be viewed as a non-commutative analogue of the second derivative of $f$.

In what follows we often make use of expressions of the type $f^{[2]}(\pm\n^t\!X)*R^t$. Note that since the function $f$ is even, $f^{(n+1)}(0)=0$ for all even $n$. Hence,
\begin{equation}\label{E:star pm}
	f^{[2]}(-\n^t\!X)*R^t \ = \ f^{[2]}(\n^t\!X)*R^t.
\end{equation}

\subsection{Computation of $TL_\grg(g,g_0)_{[3]}$}\label{SS:TL3}
We are now ready to formulate the main result of this section - the computation of the degree 3 component of the transgression form  $TL_\grg(g,g_0)_{[3]}$. In the non-equivariant case such a computation was indicated in \cite{egh}, where the following  formula is proven
\[
	TL(g,g_0)_{[3]}\ = \ 2\,\int_0^1\Theta\we R^t\,dt.
\]
The equivariant version is notably more involved.

\begin{Proposition}\lb{4transg}
The degree 3 component of the equivariant transgression form of the Hirzebruch $L$-form
is given by
\begin{multline}\label{transg}
	TL_\grg(g,g_0)(X)_{[3]}\ = \
	\int_0^1
	\exp\left(\Tr\left[f(\nab^t\!X)\right]\right)\cdot\\
	\Big(
	  \Tr\left[\Theta f'(\nab^t\!X)\right]\cdot \Tr\left[f'(\nab^t\!X)R^t\right]
	  \ + \
	  \Tr\big[\big(f^{[2]}(\nab^t\!X)*\Theta\big) R^t\big]
	\Big)\,dt.
\end{multline}
\end{Proposition}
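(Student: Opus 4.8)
The starting point is the closed formula \eqref{E:transgression L} for $TL_\grg(g,g_0)(X)$, together with the decompositions $R^t_\grg(X)=R^t-\nab^t\!X$ of \eqref{E:Rt}, in which $R^t$ has geometric degree $2$, $\Theta$ has geometric degree $1$, and $\nab^t\!X$ has geometric degree $0$. The plan is to expand each factor $\exp\!\big(\Tr[f(R^t_\grg(X))]\big)$ and $\Tr[\Theta\,f'(R^t_\grg(X))]$ appearing under the integral sign as a Taylor series in the ``degree-$\ge 1$'' piece, i.e. substitute $R^t_\grg(X) = (-\nab^t\!X) + R^t$ with $R^t$ treated as the small noncommutative perturbation of the scalar-degree operator $-\nab^t\!X$, and then retain only the terms of total geometric degree $3$. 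Because $\Theta$ already carries geometric degree $1$, the factor $\Tr[\Theta f'(R^t_\grg(X))]$ contributes degree $3$ only through its degree-$0$ and degree-$2$ parts; and since the exponential factor begins in degree $0$, picking out $TL_\grg(g,g_0)_{[3]}$ amounts to a short bookkeeping of which combinations of $R^t$'s sum to the right degree.

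First I would record the first-order expansion of $f(A+B)$ for a scalar-degree operator $A=-\nab^t\!X$ perturbed by $B$ of positive degree: the linear-in-$B$ term is exactly $f^{[2]}(A)*B$ in the notation of \eqref{E:def of star}, since $\frac{d}{ds}\big|_{s=0} f(A+sB) = \sum_n \frac{1}{n!}f^{(n+1)}(0)H_n(A,B)$ by differentiating the power series $f(z)=\sum \frac{f^{(n)}(0)}{n!}z^n$ term by term and using the Leibniz rule for the noncommutative monomial $A^{n}$. Applying this with $A=-\nab^t\!X$, $B=R^t$, and using the evenness identity \eqref{E:star pm} to replace $f^{[2]}(-\nab^t\!X)*R^t$ by $f^{[2]}(\nab^t\!X)*R^t$, gives $\Tr[f(R^t_\grg(X))] \equiv \Tr[f(\nab^t\!X)] + \Tr[f^{[2]}(\nab^t\!X)*R^t] + (\text{degree}\ge 4)$, where the degree-$2$ term is what will feed into the exponential's first-order expansion. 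Likewise $f'(R^t_\grg(X)) \equiv f'(\nab^t\!X) + (f')^{[2]}(\nab^t\!X)*R^t + \dots$, of which only the degree-$0$ piece $f'(\nab^t\!X)$ survives once multiplied by $\Theta$ and truncated at degree $3$ inside $\Tr[\Theta f'(R^t_\grg(X))]$... wait: the degree-$2$ correction times $\Theta$ is degree $3$, so it does survive, and yields the $\Tr[(f^{[2]}(\nab^t\!X)*\Theta)R^t]$ term after cyclicity of the supertrace and the identity $(f')^{[2]}=f^{[2]}$ up to the appropriate indexing of derivatives — this is exactly the second summand in \eqref{transg}.

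Next I would assemble the pieces. Write the integrand of \eqref{E:transgression L} as $E(t)\cdot P(t)$ with $E(t)=\exp(\Tr[f(R^t_\grg(X))])$ and $P(t)=\Tr[\Theta f'(R^t_\grg(X))]$. From the expansion of $E$: $E(t) \equiv \exp(\Tr[f(\nab^t\!X)])\cdot\big(1 + \Tr[f^{[2]}(\nab^t\!X)*R^t] + \dots\big)$, whose degree-$0$ and degree-$2$ parts are $\exp(\Tr[f(\nab^t\!X)])$ and $\exp(\Tr[f(\nab^t\!X)])\,\Tr[f^{[2]}(\nab^t\!X)*R^t]$. From the expansion of $P$: its degree-$1$ part is $\Tr[\Theta f'(\nab^t\!X)]$ and its degree-$3$ part is $\Tr[\Theta\,(f^{[2]}(\nab^t\!X)*R^t)]=\Tr[(f^{[2]}(\nab^t\!X)*\Theta)R^t]$ after a cyclic rearrangement. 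The degree-$3$ part of the product $E\cdot P$ is therefore $\exp(\Tr[f(\nab^t\!X)])\cdot\big(\Tr[\Theta f'(\nab^t\!X)]\cdot\Tr[f^{[2]}(\nab^t\!X)*R^t] + \Tr[(f^{[2]}(\nab^t\!X)*\Theta)R^t]\big)$; a small identity, $\Tr[f^{[2]}(\nab^t\!X)*R^t] = \Tr[f'(\nab^t\!X)R^t]$ (valid under the trace because $\Tr[H_n(a,b)]=n\,\Tr[a^{n-1}b]$ by cyclicity, so the $*$-product traces like an ordinary derivative), converts the first summand to $\Tr[\Theta f'(\nab^t\!X)]\cdot\Tr[f'(\nab^t\!X)R^t]$, matching \eqref{transg} exactly. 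Integrating in $t$ finishes the proof.

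I expect the main obstacle to be the careful handling of the noncommutative Taylor expansions: getting the first-order term of $f(A+B)$ correct as $f^{[2]}(A)*B$ rather than a naive $f'(A)B$, keeping track of which $f^{[2]}$ appears (the one coming from expanding $f$ inside the exponential versus the one from expanding $f'$ that multiplies $\Theta$, which are the same object by the definition \eqref{E:def of star} applied to $f$ and $f'$), and verifying the trace identities $\Tr[H_n(a,b)]=n\,\Tr[a^{n-1}b]$ and the cyclic move $\Tr[\Theta\,(g(a)*R^t)] = \Tr[(g(a)*\Theta)\,R^t]$ that underlies rewriting the second summand. Since $\Theta$ is an odd-degree form one must also be a little careful that the relevant signs work out in the $\Z_2$-graded (super)trace; but the supertrace is still cyclic up to the standard sign, and the sign conventions are set up in \cite{bgv} so that the formula comes out as stated. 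All of this is routine once the expansions are organized, and no geometric input beyond torsion-freeness of $\n^t$ (already noted in Subsection~\ref{SS:simple notation}, which is what guarantees the shape \eqref{E:Rt} of $R^t_\grg(X)$) is needed.
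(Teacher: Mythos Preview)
Your overall strategy---expand both factors of the integrand in \eqref{E:transgression L} as noncommutative Taylor series about the degree-zero endomorphism $-\nab^t\!X$ and collect the degree-$3$ terms---is exactly the paper's approach. However, the execution contains two concrete errors that do not cancel.

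First, you lose the sign in $R^t_\grg(X)=R^t-\nab^t\!X$. The degree-$0$ part of $R^t_\grg(X)$ is $A=-\nab^t\!X$, so the leading term of $f'(R^t_\grg(X))$ is $f'(-\nab^t\!X)=-f'(\nab^t\!X)$ (since $f$ is even); consequently the degree-$1$ part of $P(t)=\Tr[\Theta f'(R^t_\grg(X))]$ is $-\Tr[\Theta f'(\nab^t\!X)]$, not $+\Tr[\Theta f'(\nab^t\!X)]$. The same sign enters the degree-$2$ part of $E(t)$: the paper's computation \eqref{E:Trfn}--\eqref{E:Trf2} gives $E(t)\equiv\exp(\Tr[f(\nab^t\!X)])\cdot(1-\Tr[f'(\nab^t\!X)R^t])$, with a minus sign. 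In the product $E\cdot P$ these two minus signs multiply to $+$, which is what yields the stated formula; your version drops both.

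Second, and more seriously, your identification of the linear-in-$B$ term of $f(A+B)$ with $f^{[2]}(A)*B$ is off by one derivative: differentiating $\sum_n\frac{f^{(n)}(0)}{n!}(A+sB)^n$ at $s=0$ gives $\sum_n\frac{f^{(n)}(0)}{n!}H_n(A,B)$, not $\sum_n\frac{f^{(n+1)}(0)}{n!}H_n(A,B)$. The operation $f^{[2]}(A)*B$ of \eqref{E:def of star} is the linear-in-$B$ term of $f'(A+B)$, not of $f(A+B)$; this is why the paper calls it a noncommutative \emph{second} derivative. In particular, under the trace one has $\Tr[H_n(a,b)]=n\,\Tr[a^{n-1}b]$, whence $\Tr[f^{[2]}(a)*b]=\sum_{n\ge1}\frac{f^{(n+1)}(0)}{(n-1)!}\Tr[a^{n-1}b]=\Tr[f''(a)b]$, not $\Tr[f'(a)b]$. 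So your ``small identity'' $\Tr[f^{[2]}(\nab^t\!X)*R^t]=\Tr[f'(\nab^t\!X)R^t]$ is false, and following your argument literally would produce $\Tr[f''(\nab^t\!X)R^t]$ in the first summand of \eqref{transg} instead of the correct $\Tr[f'(\nab^t\!X)R^t]$. The fix is simply to compute the degree-$2$ term of $\Tr[f(R^t_\grg(X))]$ directly as in \eqref{E:Trfn}, obtaining $-\Tr[f'(\nab^t\!X)R^t]$, and to keep the signs straight.
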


We present a proof of the proposition in Subsections~\ref{SS:exponential term}--\ref{SS:pr4transg}.
Here we note that the proposition holds in more generality with the same proof.
Namely, it holds whenever the characteristic form is defined via a germ of an {\em even}
analytic function $f$.

\subsection{The exponential term in $TL_\grg(g,g_0)$}\label{SS:exponential term}
We first calculate the first term in the integrand of \eqref{E:transgression L}:
\begin{equation}\label{E:Trf1}
		\exp\Big[\mathrm{Tr}\big(f(R^t_\grg(X))\big)\Big]
	\ = \ \exp\Big[\mathrm{Tr}\big(f(R^t-\nab^t\!X)\big)\Big]
	\ = \ \sum_{n=0}^\infty\fr 1{n!}\Big(\Tr\big(f(R^t-\n^t\!X)\big)\Big)^n.
\end{equation}
In the next computation we use the Taylor expression of $f$ and the fact that  $(R^t)^n\equiv 0$ modulo forms of geometric degree $\ge4$  for all $n>1$. We thus obtain
\begin{multline}\label{E:Trfn}
	\Tr\big(f(R^t-\n^t\!X)\big) \ = \
	\Tr\Big[\,
	\sum_{k=0}^\infty \frac1{k!}f^{(k)}(0)\,\big(R^t-\n^tX\big)^k\,\Big]
	\\ \equiv  \
	\Tr\Big[\,
	\sum_{k=0}^\infty \frac1{k!}f^{(k)}(0)\,\Big(\,
	(-\n^tX)^k+ k(-\n^tX)^{k-1}R^t\,\Big)\,\Big]
	\\ = \
	\Tr\Big[\,f(-\n^tX)+f'(-\n^tX)\,R^t\,\Big]
	\ = \
	\Tr\Big[\,f(\n^tX)-f'(\n^tX)\,R^t\,\Big],
\end{multline}
where for the second equality the stability of the trace of a product under cyclic permutations was applied, and in the last equality we made use of the fact that $f$ is an even function, hence $f(-\n^t\!X)= f(\n^t\!X)$ and $f'(-\n^t\!X)= -f'(\n^t\!X)$.

Combining \eqref{E:Trf1} and \eqref{E:Trfn}, we obtain
\begin{multline}\label{E:Trf2}
	\exp\Big[\mathrm{Tr}\big(f(R^t_\grg(X))\big)\Big]
	\ \equiv \
	\sum_{n=0}^\infty\fr 1{n!}\Big(\, \Tr\big[f(\n^tX)\big]-
	\Tr\big[f'(\n^tX)\,R^t\big]\,\Big)^n
	\\ \equiv \
	\sum_{n=0}^\infty\frac{1}{n!}\,\Big(\,
	\big(\Tr\big[f(\n^tX)\big]\big)^n -
	n\big(\Tr\big[f(\n^tX)\big]\big)^{n-1}\Tr\big[f'(\n^tX)R^t\big]\,\Big)	
	\\ = \
	\exp\Big(\Tr\big[f(\n^tX)\big]\Big) \ - \
	\exp\Big(\Tr\big[f(\n^tX)\big]\Big)\cdot \Tr\big[f'(\n^tX)R^t\big]
	\\ = \
	\exp\Big(\Tr\big[f(\n^tX)\big]\Big)\cdot
	\Big(1-\Tr\big[f'(\n^tX)R^t\big]\Big).
\end{multline}

\subsection{The second term in $TL_\grg(g,g_0)$}

Using the notation \eqref{E:def of star} and the fact that the geometric degree of $R^t$ is equal to 2, we obtain
\begin{multline}\label{E:f(R-)}
	f'(R^t-\n^tX)\ \equiv \
	\sum_{n=1}^\infty
	\frac{1}{n!}f^{(n+1)}(0)\,\Big((-\n^tX)^n+H_n(-\n^tX,R^t)\Big)
	\\ = \
	f'(-\n^tX)+ f^{[2]}(-\n^tX)*R^t
	\ = \ -f'(\n^tX)+ f^{[2]}(\n^tX)*R^t,
\end{multline}
where in the last equality we again made use of the evenness of the function $f$, so that, in particular, it satisfies \eqref{E:star pm}.

From \eqref{E:f(R-)} we obtain
\begin{equation}\label{E:trace term}
	\Tr\left[\Theta f'(R^t-\nab^t\!X)\right]
	\ \equiv \
	-\Tr\Big[\Theta f'(\n^tX)\Big] \ + \
	\Tr\Big[\Theta\cdot \left(f^{[2]}(\n^tX)*R^t\right)\Big],
\end{equation}
Note that in comparison with the calculation in \eqref{E:Trfn}, here the appearance of
$\Theta$ prevented us from writing a more simplified form for the second trace. We also note that the first term in the right hand side of \eqref{E:trace term} has geometric degree 1, while the second term has geometric degree 3.

Finally, the cyclic stability of the trace yield
\begin{multline}\label{E:cyclic0}
	\Tr\left[\Theta\cdot \big(f^{[2]}(\n^t\!X)*R^t\big)\right]
	\ = \
	\Tr\left[
	\sum_{n=0}^\infty\fr 1{n!}f^{(n+1)}(0)\,
	 \sum_{q=0}^{n-1}\Theta\cdot(\n^tX)^qR^t(\n^tX)^{n-1-q}
	\right]
	\\ = \
	\Tr\left[
	\sum_{n=0}^\infty\fr 1{n!}f^{(n+1)}(0)\,
		\sum_{q=0}^{n-1}(\n^tX)^{n-1-q}\Theta (\n^tX)^q\cdot R^t
	\right]
	\ = \
	\Tr\left[\left(f^{[2]}(\nab^t\!X)*\Theta\right)R^t\right].
\end{multline}

\subsection{Proof of Proposition~\ref{4transg}}\label{SS:pr4transg}
Substituting first the right hand side of \eqref{E:cyclic0} into \eqref{E:trace term}, and then the result along
with \eqref{E:Trf2}  into \eqref{E:transgression L}, and, finally,  keeping only the terms of geometric degree 3, we obtain \eqref{transg}. \hfill$\square$

\subsection{A different form of a formula for $TL_\grg(g,g_0)_{[3]}$}\label{SS:beautification}
We finish this section by presenting a slightly less explicit, but arguably more aesthetic formula for $TL_\grg(g,g_0)_{[3]}$.

\begin{Corollary}
Under the assumptions of Proposition~\ref{transg} we have
\begin{multline}\label{E:beautification}
	TL_\grg(g,g_0)(X)_{[3]} \\ = \
	\left(\int_0^1 \exp\left(\Tr\left[f(\n^t\!X)\right]\right)\cdot
	\left(1+\Tr\left[\Theta f'(\n^t\!X)\right]\right)\cdot
	\Tr\left[f'(\Theta+\n^t\!X)R^t\right]\,dt\right)_{[3]}.
\end{multline}
\end{Corollary}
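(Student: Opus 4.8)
\textbf{Proof proposal for the corollary (alternative form of $TL_\grg(g,g_0)_{[3]}$).}
The plan is to show that the right-hand side of \eqref{E:beautification}, after extracting its geometric degree $3$ part, coincides term by term with \eqref{transg}. First I would expand the factor $f'(\Theta+\n^t\!X)$ and keep only what contributes to geometric degree $3$ inside the integrand. Since $\Theta$ has geometric degree $1$ and $\n^t\!X$ has geometric degree $0$, while $R^t$ has geometric degree $2$, the term $\Tr[f'(\Theta+\n^t\!X)R^t]$ already carries degree $2$ from $R^t$; thus only the degree $0$ and degree $1$ parts of $f'(\Theta+\n^t\!X)$ are relevant. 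The degree $0$ part is $f'(\n^t\!X)$, and the degree $1$ part, obtained by linearizing in $\Theta$ exactly as in the passage leading to \eqref{E:f(R-)}, is $f^{[2]}(\n^t\!X)*\Theta$ (using that $f$ is even so that the sign ambiguities match). Hence modulo geometric degree $>3$,
\[
	\Tr\left[f'(\Theta+\n^t\!X)R^t\right]
	\ \equiv \
	\Tr\left[f'(\n^t\!X)R^t\right]
	\ + \
	\Tr\left[\big(f^{[2]}(\n^t\!X)*\Theta\big)R^t\right].
\]

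Next I would substitute this into the integrand of \eqref{E:beautification}. The prefactor $\exp(\Tr[f(\n^t\!X)])\cdot(1+\Tr[\Theta f'(\n^t\!X)])$ has a degree $0$ piece $\exp(\Tr[f(\n^t\!X)])$ and a degree $1$ piece $\exp(\Tr[f(\n^t\!X)])\,\Tr[\Theta f'(\n^t\!X)]$. Multiplying out and discarding everything of geometric degree $>3$: the degree $0$ prefactor times the first trace above gives $\exp(\Tr[f(\n^t\!X)])\,\Tr[f'(\n^t\!X)R^t]$, which appears nowhere in \eqref{transg} and must therefore be shown to vanish after integration — this is the point that needs care; I return to it below. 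The degree $0$ prefactor times the second trace gives $\exp(\Tr[f(\n^t\!X)])\,\Tr[(f^{[2]}(\n^t\!X)*\Theta)R^t]$, which is exactly the second summand of \eqref{transg}. The degree $1$ prefactor times the first trace gives $\exp(\Tr[f(\n^t\!X)])\,\Tr[\Theta f'(\n^t\!X)]\cdot\Tr[f'(\n^t\!X)R^t]$, which is exactly the first summand of \eqref{transg}; the degree $1$ prefactor times the second trace is already of geometric degree $\ge 4$ and is dropped.

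It remains to dispose of the spurious term $\int_0^1 \exp(\Tr[f(\n^t\!X)])\,\Tr[f'(\n^t\!X)R^t]\,dt$ in geometric degree $3$. I expect this to be the main obstacle, and the cleanest way to handle it is to recognize it as an exact equivariant form of low degree whose degree $3$ component restricts to zero on $\pM$, or, more directly, to observe that it is a total $t$-derivative. Indeed, since $\tfrac{d}{dt}\n^t = \Theta$ is independent of $t$ and $R^t = (\n^t)^2$, one has $\tfrac{d}{dt}R^t = d_\grg^{\n^t}\Theta$ in the appropriate sense, and more to the point $\exp(\Tr[f(\n^t\!X)])\,\Tr[f'(\n^t\!X)R^t]$ differs from $-\tfrac{d}{dt}\exp(\Tr[f(\n^t\!X)])$ only by terms of geometric degree $\ge 2$ coming from $R^t$ inside the exponent — more precisely, from \eqref{E:Trfn}, $\tfrac{d}{dt}\Tr[f(R^t_\grg(X))] \equiv \tfrac{d}{dt}\Tr[f(\n^t\!X)] - \tfrac{d}{dt}\Tr[f'(\n^t\!X)R^t]$, and the geometric degree $3$ part of $\Tr[f'(\n^t\!X)R^t]$ can be matched against a $t$-derivative of a degree $2$ equivariant form, whose integral over $[0,1]$ is therefore a difference of boundary values that either cancels or is itself of the wrong degree to survive restriction to $\pM$. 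I would carry this out by writing the degree $3$ component of the integrand in \eqref{E:beautification} explicitly, isolating the spurious term, and checking that it integrates to a form that vanishes upon pullback to $\pM$ (equivalently, that modulo $d_\grg$-exact terms and forms of geometric degree $>3$ it drops out), which is all that is needed since, as noted in Subsection~\ref{SS:geom degree}, only the restriction to $\pM$ enters \eqref{E:signuntwisted}. This establishes \eqref{E:beautification}. \hfill$\square$
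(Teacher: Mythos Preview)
Your expansion of the right-hand side of \eqref{E:beautification} is correct and is essentially the paper's computation run in reverse. The only real issue is your treatment of the ``spurious term''
\[
	\exp\!\left(\Tr\big[f(\n^t\!X)\big]\right)\cdot\Tr\big[f'(\n^t\!X)\,R^t\big].
\]
You describe this as the main obstacle and sketch an argument via total $t$-derivatives and/or vanishing after pullback to $\pM$. None of that is needed, and as written it does not work. The point is simply that this term has geometric degree exactly~$2$: the exponential factor has degree~$0$ since $\n^t\!X$ has degree~$0$, and $\Tr[f'(\n^t\!X)R^t]$ has degree~$2$ since $f'(\n^t\!X)$ has degree~$0$ and $R^t$ has degree~$2$. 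Therefore its degree~$3$ component is identically zero, before any integration, any appeal to exactness, or any restriction to the boundary. This is exactly what the paper observes in the last line of its proof (``taking into account that the degree~$3$ component of $\Tr[f'(\n^t\!X)R^t]$ is equal to~$0$'').

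Once you make this observation, your remaining terms---the degree~$1$ prefactor times the degree~$2$ trace, and the degree~$0$ prefactor times the degree~$3$ trace---are precisely the two summands of \eqref{transg}, and the proof is complete. So the approach is right; you simply miscounted the degree of the leftover term and then tried to fix a non-problem with an argument that, as stated, is not correct (the proposed identification with a $t$-derivative does not hold at the level of forms, and ``vanishing upon pullback to $\pM$'' would not suffice anyway since \eqref{E:beautification} is asserted as an equality of forms on $M$, not merely after restriction).
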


\begin{proof}
As in  \eqref{E:f(R-)} we get
\begin{equation}\label{E:f(Theta-)}\notag
	f'(\Theta+\nab^t\!X)\cdot R^t
	\ \equiv \
	f'(\n^tX)R^t\ + \ \big( f^{[2]}(\n^tX)*\Theta\big)\cdot R^t.
\end{equation}
Solving this equation for $\left( f^{[2]}(\n^tX)*\Theta\right)\!R^t$ and taking the trace we thus obtain
\begin{equation}\label{E:cyclic}
		\Tr\left[\big( f^{[2]}(\n^tX)*\Theta\big)\cdot R^t\right]
		 	\\ \equiv \
	\Tr\big[f'(\Theta+\n^t\!X)R^t\big]\ - \ \Tr\big[f'(\n^t\!X)R^t\big].
\end{equation}

Note that $f'(\Theta+\n^t\!X)-f'(\n^t\!X)$ is a sum of forms of geometric degrees $\ge1$. Hence,
\[
	\Tr\big[f'(\n^t\!X)R^t\big]\cdot \Tr\big[\Theta f'(\n^t\!X)\big]
	\ \equiv \
	\Tr\big[f'(\Theta+\n^t\!X)R^t\big]\cdot \Tr\big[\Theta f'(\n^t\!X)\big].
\]
Using this equation and \eqref{E:cyclic} we conclude that
\begin{multline}\label{E:beaut2}
	\Tr\left[\Theta f'(\nab^t\!X)\right]\cdot \Tr\left[f'(\nab^t\!X)R^t\right]
	  \ + \
	  \Tr\big[\big(f^{[2]}(\nab^t\!X)*\Theta\big) R^t
	 \ \equiv \\
	 \Tr\big[f'(\Theta+\n^t\!X)R^t\big]\cdot \Tr\big[\Theta f'(\n^t\!X)\big]
	 \ + \
	 \Tr\big[f'(\Theta+\n^t\!X)R^t\big]\ - \ \Tr\big[f'(\n^t\!X)R^t\big]
	 \\ = \
	 \left(1+\Tr\left[\Theta f'(\n^t\!X)\right]\right)\cdot
	\Tr\left[f'(\Theta+\n^t\!X)R^t\right] \ - \  \Tr\big[f'(\n^t\!X)R^t\big].
\end{multline}
Substituting  \eqref{E:beaut2} into \eqref{transg} and taking into account that the degree 3 component of \linebreak $\Tr\big[f'(\n^t\!X)R^t\big]$ is equal to 0,  we obtain \eqref{E:beautification}.
\end{proof}

\section{ $L_\grg(g_0)$ and $TL_\grg(g,g_0)$ for a class of distinguished metrics in dimension four}\lb{S:examples}

It is of interest to examine whether one can draw any geometric conclusions from
formula \Ref{E:signuntwisted}. In \cite{hit}, Hitchin applied the non-equivariant
version of this formula to the case of the $3$-sphere bounding the $4$-ball. He showed
that the eta invariant is non-positive for conformal structures on the $3$-sphere induced
from conformal classes of complete self-dual Einstein metrics on the
$4$-ball. Positivity of the eta invariant was thus interpreted as an obstruction
for a conformal structure on the $3$-sphere to be induced by such a (globally defined) metric on
the $4$-ball. A key result in his calculation was the vanishing of the transgression of
the Hirzebruch $L$-form of any conformally compact Einstein metric in dimension four.

In this section we compute the integrands appearing in formula \Ref{E:signuntwisted} for
SKR metrics on disk bundles over Riemann surfaces, with boundary the zero set of
a certain Killing potential. The non-equivariant case, for these metrics, was developed
in \cite{mas}.

The class of SKR metrics
includes many K\"ahlerian conformal compactifications of four-dimensional Einstein metrics.
These have a more complicated structure than compactifications of self-dual Einstein metrics,
as in general no tensor in their curvature decomposition vanishes.
Nonetheless, we determine their curvature explicitly. Then, as there is also a circle action by isometries
on these disk bundles, we are able to compute formulas for the integrands in \Ref{E:signuntwisted}.
This allows us to obtain not just a bound, but an exact formula for the equivariant infinitesimal eta invariant.

In spite of this, our formula is difficult to apply in order to deduce the existence of
obstructions similar to Hitchin's. The main difficulty is that the formula for the equivariant
transgression is quite complex, and it is non-trivial to determine whether it
vanishes for arbitrary SKR metrics, or just for those that are conformal compactifications of
Einstein metrics. Still, we hope this formula finds its use in some future study.

Reducible SKR metrics, i.e. SKR metrics which are local K\"ahler products, form a
simpler subclass for which this diffculty does not arise. For these metrics, we show that many
of the curvature components do vanish. As a result, we will see that all terms in
\Ref{E:signuntwisted}, and hence the equivariant infinitesimal eta invariant,
in fact vanish. Thus its non-vanishing, for a conformal structure on an appropriate
circle bundle over a Riemann surface, would prevent the latter from being induced by a
reducible SKR metric on the disk bundle it bounds. A short discussion on whether
this result can be obtained by other means appears in Section~\ref{SS:vanishing}.


\subsection{Summary of basic facts on SKR metrics}

The class of SKR metrics described below was first defined for the purpose
of classifying K\"ahler conformally Einstein metrics on closed manifolds. An SKR metric
is also called, more explicitly, a metric admitting a special K\"ahler-Ricci potential.
Although our main computations are given only in dimension four, in this
subsection we will be discussing SKR metrics
in any dimenion. Our main reference for the following material will be \cite{dm1, dm2}.

\subsubsection{The definition}
Let $(M,J)$ be a complex manifold with a K\"ahler metric $g$.
A {\em Killing potential} $\t$ for $g$ is a smooth function on $M$ such that
$J\nab\t$ is a Killing vector field, infinitesimally generating
isometries of $g$. Such a potential is also a moment map for the K\"ahler form
of $g$. Note that if a Killing potential is nonconstant,
its set of regular points $M'$ is open and dense in $M$.

We use the following notation for distinguished vector
fields on $M$ and distributions on $M'$:
\be\lb{HVvu}
\text{$v:=\nab\t$,\qquad $u:=Jv$,\qquad and\quad $\VV:=\mathrm{span}\{v,u\}$,\qquad $\HH:=\VV^\perp$,}
\end{equation}
where $\HH$ denotes the distribution orthogonal to $\VV$.
\begin{Definition}\lb{SKRdef}
The Killing potential $\t$ above will be called a {\em special K\"ahler-Ricci potential},
and $g$ an {\em SKR metric}, if $\t$ is nonconstant, and at
each regular point of $\t$, the nonzero tangent vectors in
$\HH$ are eigenvectors of both the Ricci endomorphism and the Hessian of $\t$.

We refer to pairs $(g,\t)$ for which these conditions are satisfied as an {\em SKR structure}.
\end{Definition}


For the following paragraph, see Sections $5$, $7$, $10$, $11$ of  \cite{dm1}.
The condition on the Hessian in the definition of an SKR structure
implies that at each regular point of $\t$, the restriction of the
tensor $\nab d\t$ to $\cal{H}$ is a multiple of $g$. Now
on any K\"ahler manifold, the existence of a Killing potential $\t$ is
equivalent to the $J$-invariance of the Hessian $\nab d\t$.
The last two statements together yields the following consequence:
Given an SKR structure, there exists smooth functions $\phi$, $\psi$ on
the regular set of $\t$ which are ``pointwise eigenvalue functions" for
the Hessian\footnote{Similar eigenvalue functions exist also
for the Ricci tensor.} $\nab d\t$, in the sense that
\[
\text{$\nab d\t=\phi g$ on $\HH$,\qquad $\nab d\t=\psi g$ on $\VV$.}
\]
For an SKR metric, $\phi$, $\psi$ and $Q:=g(\nab\t,\nab\t)$ are, locally on the regular set $M'$ of $\t$,
``functions of $\t$", that is, obtained by composing a function defined on the image of $\t$, with $\t$.
As such, they satisfy various relations, for example
\be\lb{rels}
Q=2(\t-\bar{c})\phi,\qquad dQ=2\psi\,d\t, \qquad Qd\phi=2(\psi-\phi)\phi\,d\t,
\end{equation}
where $\bar{c}$ is a constant and the first of these equations only
holds when the function $\phi$ is nowhere zero on $M'$.
It is shown in \cite{dm1} that $\phi$ either vanishes nowhere on $M'$, or vanishes identically.
The relations \Ref{rels} are used in some of the computations in this section. While
all three functions will appear in our calculation, these relations imply that any
one of them determines the other two.

\subsubsection{Local structure}
In Sections $17$, $18$ of \cite{dm1}, it is shown that every regular point of
a special K\"ahler-Ricci potential $\t$ has a neighborhood $U$ biholomorphic to an open
set in a holomorphic line bundle over a K\"ahler\footnote{If the dimension of the manifold is greater than four,
the metric $h$ is also Einstein.} manifold $(N,h)$, with $v$ and $u$ taking the form
of certain linear fields tangent to the fibers.


Theorem $18.1$ in \cite{dm1} gives the local classification of SKR metrics.
It states that on an appropriate neighborhood $U$ as above, the biholomorphic mapping just mentioned
is an isometry of $g$ with a metric on the line bundle having the
following form, with $\VV$ now denoting the vertical distribution of the line bundle and
$\HH$ a horizontal distribution for a Chern connection associated to an appropriate hermitian fiber metric. If $\phi$ is nowhere zero on $M'$,
then
\be\lb{SKR}
\text{$g$\quad is\qquad $2|\t-\bar{c}|\pi^*h$ on $\HH$,\qquad $Q\theta^2/a^2+Q^{-1}d\t^2$ on $\VV$,}
\end{equation}
and\footnote{The form \Ref{SKR} is in fact only equivalent
to the expression appearing in \cite{dm1}.} $g(\HH,\VV)=0$. 
Here $\bar{c}$ is as above, $a\ne 0$ is constant,
$\t$ and $Q$ denote here the push-forwards of these two functions under the above biholomorphism,
$\theta/a$ is the one form dual\footnote{$\theta$ vanishes on $v$ and vector fields perpendicular to $v$ and $u$,
and has value $a$ on $u$.} to (the push-forward of) $u$ and $\pi$ is the projection map
from the total space of the line bundle to $N$. If, on the other hand, $\phi$ is identically zero on $M'$,
\Ref{SKR} will hold except that the factor $2|\t-\bar{c}|$ is replaced by $1$. Since in this latter case
the metric is a local product of the base K\"ahler metric $h$ and the fiber metric,
we will call such metrics reducible SKR metrics, whereas those of the form \Ref{SKR} will be
called irreducible.

In the following, the Levi-Civita connection of $h$ will be denoted by $D$
and its curvature by $R^h$. The K\"ahler forms of $g$ and $h$ will be denoted $\om$, $\om^h$
respectively.

The Levi-Civita connection of an SKR metric $g$ is given, for $v$, $u$ and any
$C^1$ vector fields $w$, $w'$ defined on the regular set $M'$ of $\t$, orthogonal
to and commuting with $v$ and $u$, by
\be\lb{conn}
\begin{aligned}
&\nab_vv=-\nab_uu=\psi v,\quad &&\nab_vu=\nab_uv=\psi u,\quad \nab_vw=\nab_wv=\phi w,\\
&\nab_uw=\nab_wu=\phi Jw,\quad  &&\nab_ww'=D_ww'-(\phi/Q)[g(w,w')v+\om(w,w')u],
\end{aligned}
\end{equation}
(see Sections $7$, $13$ in \cite{dm1}). In dimension greater than $2$, nonzero vector fields $w$, $w'$
of this type always exist. For example, if the above line bundle structure holds
on $M'$, then $w$, $w'$ are obtained as horizontal lifts of vector fields on $N$. That is, lifts
under the projection map $\pi$ of corresponding base vector fields, which are also sections of $\HH$.
In the last relation in \ref{conn} these base vector fields are also denoted $w$, $w'$. Note
that the lift of $Jw$ on the base is $J$ of the lift of $w$ (with $J$ denoting both the
base and the total space complex structures).
Formulas \Ref{conn} will be utilized below to compute curvature components
in a distinguished frame, necessary for the computation of both $L_\grg(g)$ and $TL_\grg(g,g_0)$.

\subsubsection{Global considerations}

The closed manifolds admitting SKR metrics in all dimensions were classified in \cite{dm2, dm3}.
They are biholomorphic to either $CP^m$, or certain $CP^1$-bundles over a K\"ahler base manifold
(which again is also Einstein if the dimension is greater than four). On these manifolds the vector field
$u$ generates a circle action by isometries.

\begin{Definition}\lb{glob}
A {\em compact SKR manifold with boundary} is a  triple $(M,g,\t)$
where M is a compact manifold with boundary and $(g,\t)$ is an SKR structure on
the interior of $M$, which extends smoothly to the boundary $\pM=\tau^{-1}(0)$,
where zero is a regular value of $\t$.
\end{Definition}
A similar classification of compact SKR manifolds with boundary was
sketched in \cite{mas}, without complete proof, described as a consequence of the
classification for closed manifolds. Such a classification shows,  in particular,
that the orbits of $u$ are still closed, and the possible manifolds with boundary
are obtained by cutting $CP^m$ or the above $CP^1$-bundles along the zero set
of $\t$, assuming it is nonempty. Thus, one expects either $\pi:M\to N$ is a closed
disc bundle over a Riemann surface, or $M$ is a closed ball centered at the origin
in a complex vector space.

For our purpose here, which is to compute the formulas for the equivariant Hirzebruch
$L$ form and its transgression, we make the following assumption explicit,
though according the sketch alluded to above, it should follow as one of the cases in the
above classification of SKR manifolds with boundary (up to a biholomorphic isometry).
A {\em fibered SKR manifold}
is a compact SKR manifold with boundary $(M, g, \t)$ (with $g$ also called a fibered SKR metric), such that the following holds.
\be\lb{global}
\begin{aligned}
&\text{On the regular set $M'$ of $\t$, $g$ is given by \Ref{SKR} or its version for reducible}\\[-5pt]
&\text{metrics, for a fibration $(M',g)\to(N,h)$, with $w$, $w'$ of \Ref{conn} defined on $M'$,}\\[-5pt]
&\text{and $u$ generating a circle action by isometries on $M$.}
\end{aligned}
\end{equation}
\begin{Remark}
For an explicit description of the SKR metrics on $CP^m$ and the above $CP^1$-bundles
in terms of data defined on these spaces, we refer to Sections $5$, $6$ in \cite{dm2}.
They include boundary conditions expressed as conditions on $Q$ at critical values of $\t$.
These will not concern us, as we will give formulas for (objects associated with) $L_\grg$,
$TL_\grg$ on the open and dense regular set $M'$, which suffice for integration\footnote{
Another fact that is implicitly used below, and was proven in \cite{dm2}, is that for a
fibered SKR space the value $\bar{c}$ is never in the image of the restriction of
$\t$ to $M'$, so that $\t-\bar{c}$ does not change sign.}.

Note that while we work mostly with metrics satisfying \Ref{SKR}, we will also indicate
how the results are modified for reducible SKR metrics.
\end{Remark}

To conform to the notation for equivariant forms in the rest of this paper, we set $X:=u$,
whereas $\grg$ will denote, from here on, the Lie algebra of the circle group.

\begin{Remark}
As shown in \cite{dm1}, any K\"ahler metric $g$ conformal to an Einstein metric on
a manifold of dimension greater than four is SKR, with $\t$ giving rise to the conformal
factor, so that the Einstein metric is given by $g_E=g/\t^2$. Such conformally Einstein SKR metrics
also exist in dimension four, and motivate our interest in SKR metrics in this dimension. On a compact
SKR manifold with boundary (M,g) satisfying \Ref{global}, with $g/\t^2$ Einstein, $g$ will thus
be a conformal compactification of the Einstein metric, while $\t$, if positive, will be a defining
function for the boundary. This is the only reason for requiring the boundary to be the zero level set
of $\t$. Proposition \ref{TLg} gives a formula for the pull-back to $\pM$ of the degree three component
of $TL_\grg$, and similar formulas hold when the boundary is any other regular level set.
\end{Remark}

\subsection{Curvature}

Curvature, as a matrix valued $2$-form, is given in an orthonormal frame
$\{e_i\}$, $i=1,\ldots, n=2m$ for the tangent bundle with dual coframe $\{e^i\}$ by
$$R_{ij}=R(e_i,e_j)=\sum_{k<l}R_{ijkl}e^k\we e^l,\qquad R_{ijkl}=\lan R(e_i,e_j)e_k,e_l\ran.$$
We now wish to describe this matrix for an SKR metric on a $4$-manifold, with respect to a distinguished frame.
The full curvature tensor $R$ of an SKR metric has not been computed previously, as
only its Ricci tensor was studied.

\subsubsection{The curvature matrix}

We have,
\begin{Proposition}\lb{curv-mtx}
Let $(M,g,\t)$ be a fibered SKR $4$-manifold. Choose an orthonormal frame $\{e_i\}$
so that $e_1=w/|w|$ is the normalization of a horizontal lift $w$, $e_{2}=Je_{1}$, while $e_{3}=u/\sqrt{Q}$ and $e_{4}=-v/\sqrt{Q}$. Setting $e^{ij}:=e^i\we e^j$, the curvature matrix in this frame has the form
\begin{equation}\lb{curv-mx}
[R_{ij}]=\begin{bmatrix}
         0          & \ be^{12}+ce^{34}  & \ r(e^{13}+e^{24})   & r(e^{14}-e^{23}) \\[4pt]
-(be^{12}+ce^{34})  &        0           &  -r(e^{14}-e^{23})   & r(e^{13}+e^{24}) \\[4pt]
-r(e^{13}+e^{24})   & \ r(e^{14}-e^{23}) &         0            & ce^{12}+de^{34}  \\[4pt]
-r(e^{14}-e^{23})   & -r(e^{13}+e^{24}) &  -(ce^{12}+de^{34})  &         0
\end{bmatrix},
\end{equation}
for certain functions\footnote{note that the function $c$ has nothing to do with the constant $\bar{c}$
 previously mentioned.} $b$, $c$, $d$, $r$.
\end{Proposition}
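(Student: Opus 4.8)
The plan is to reduce everything to an explicit computation in the distinguished frame via the Levi-Civita formulas \Ref{conn}. Since curvature is tensorial it suffices to work at a single regular point $p$ of $\t$ and to invoke the local line bundle structure of \cite{dm1}: on a neighbourhood in $M'$ the metric $g$ has the form \Ref{SKR} (or its reducible version) over a K\"ahler base $(N,h)$, which in dimension four is a Riemann surface. I would take $e_1=w/|w|$ with $w$ the horizontal lift of an $h$-unit vector field on $N$ chosen $D$-parallel at the base point $\pi(p)$, $e_2=Je_1$ (which, by the quoted behaviour of horizontal lifts, is the normalization of the lift of $J$ of that base field), $e_3=u/\sqrt Q$, $e_4=-v/\sqrt Q$; this is a $g$-orthonormal, $J$-adapted frame ($Je_1=e_2$, $Je_3=e_4$), and $|w|^2=2|\t-\bar{c}|$ is a function of $\t$ alone. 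The computation rests on one simplification: because $d\t=g(v,\cdot)$, the function $\t$ — hence each of $\phi,\psi,Q$ and the normalizations $|w|,\sqrt Q$ — is annihilated by $e_1,e_2$ and by $u$, and its $v$-derivative is governed by \Ref{rels}.

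First I would record the Lie brackets of the frame. From $[v,u]=0$, the fact that $w$ and $Jw$ commute with $v$ and $u$, and the last line of \Ref{conn} with $g(w,Jw)=0$ and $\om(w,Jw)=\pm|w|^2$, one gets $[e_1,e_2]=\mathrm{const}\cdot(\phi/\sqrt Q)\,e_3$, the constant being pinned down to $\mp 2$ by $Q=2(\t-\bar{c})\phi$; $[e_1,e_3]=[e_2,e_3]=0$; and $[e_1,e_4]$, $[e_2,e_4]$, $[e_3,e_4]$ are explicit scalar multiples of $e_1$, $e_2$, $e_3$ arising only from the $v$-derivatives of the normalizations. Next I would compute $\nabla_{e_i}e_j$ for all $i,j$ from \Ref{conn} and the normalizations; here \Ref{rels} produces systematic cancellations — in particular $\nabla_{e_4}e_j=0$ for $j=1,\dots,4$ — so that in this frame $\nabla$ is described by a handful of connection $1$-forms built from $\phi$, $\psi$ and the Chern-connection $1$-form of the line bundle, none of which has a component along $e^4$.

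I would then assemble the curvature, either from the second structure equation $R_{ij}=d\omega_{ij}+(\omega\wedge\omega)_{ij}$ or directly from $R(e_i,e_j)e_k=\nabla_{e_i}\nabla_{e_j}e_k-\nabla_{e_j}\nabla_{e_i}e_k-\nabla_{[e_i,e_j]}e_k$. The base curvature $R^h$ enters only through $R(e_1,e_2)e_1$ and $R(e_1,e_2)e_2$ (via the exterior derivative of the Chern-connection form); since $N$ is a Riemann surface, $R^h$ is a single function times the area form and therefore contributes just one scalar, inside the entry $R_{12}$. Every other contribution is polynomial in $\phi$, $\psi$, $Q$ and their $\t$-derivatives (all mutually determined by \Ref{rels}). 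Collecting terms and invoking the first Bianchi identity, the pair symmetry $R_{ijkl}=R_{klij}$, and — crucially — the K\"ahler identities $R(Jx,Jy,z,w)=R(x,y,z,w)=R(x,y,Jz,Jw)$ in the $J$-adapted frame, one checks that (i) all components with an odd number of $\VV$-indices vanish, (ii) the genuinely ``diagonal-mixed'' components such as $R_{1314}$, $R_{1213}$ vanish, and (iii) the surviving $\HH$--$\VV$ cross block collapses to the single coefficient $r$; step (iii) uses, besides the K\"ahler symmetry, that $\nabla_w v=\phi w$ and $\nabla_w u=\phi Jw$ carry the same factor $\phi$, together with $Q\,d\phi=2(\psi-\phi)\phi\,d\t$. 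What remains is exactly the matrix \Ref{curv-mx}, with $b=R_{1212}$, $c=R_{1234}$, $d=R_{3434}$, $r=R_{1313}$.

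The main obstacle is the bookkeeping, concentrated in two places: carrying the $v$-derivatives of the normalizing factors $|w|$, $\sqrt Q$ through the connection and curvature formulas and checking, via \Ref{rels}, the cancellations that leave only the listed entries; and establishing the two rigidity facts — the vanishing of the mixed components and the equalities forcing $R_{13}=R_{24}$, $R_{14}=-R_{23}$ with a common coefficient $r$ — which is where the K\"ahler symmetry of $R$ and the special structure of \Ref{conn} must be combined. For the reducible case one replaces $2|\t-\bar{c}|$ by $1$ in \Ref{SKR}, equivalently sets $\phi\equiv 0$; the same computation then shows $g$ is a local product and that $c$ and $r$ (and much of $b$, $d$) vanish, as used in Subsection~\ref{SS:vanishing}.
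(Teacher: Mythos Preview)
Your proposal is correct and is essentially the paper's second proof, which proceeds exactly as you outline: one records the covariant derivatives from \Ref{conn}, computes the individual curvature components in the frame $\{e_i\}$ (the paper packages this in a separate lemma giving $\lan R(v,u)v,u\ran$, $\lan R(w,v)w',v\ran$, etc., together with the vanishing of components having exactly three arguments from $\{w,Jw\}$ or from $\{v,u\}$), and then reads off the matrix \Ref{curv-mx} with $b=R_{1212}$, $c=R_{1234}$, $d=R_{3434}$, $r=R_{1313}$ using the K\"ahler curvature symmetries. Your observation that $\nabla_{e_4}e_j=0$ and the cancellations coming from \Ref{rels} are precisely what drive that lemma's computations.

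The paper, however, also gives a first, more conceptual proof that you may find worth knowing. It works with the curvature operator $\calR$ on $\Lambda^2T^*M$ in its block decomposition $\begin{pmatrix} W^+ + s/12 & B\\ C & W^- + s/12\end{pmatrix}$ relative to the self-dual/anti-self-dual splitting. Three structural facts do the work: (i) since $g$ is K\"ahler in dimension four, $W^+$ is diagonal in the basis $\{\alpha_i\}$ with $\alpha_1$ (a multiple of) the K\"ahler form, with eigenvalues $\{s/6,-s/12,-s/12\}$; (ii) SKR duality --- the fact that $\tilde g=g/(\t-\bar c)^2$ is K\"ahler for the opposite orientation --- together with conformal covariance of $W^\pm$ forces $W^-$ to have the same ``one distinguished, two equal'' eigenvalue pattern; (iii) the SKR Ricci eigenstructure ($\pm\lambda_0$ on $\HH$, $\VV$) makes the off-diagonal block $B$ have rank one, sending $\beta_1\mapsto-\lambda_0\alpha_1$ and killing $\beta_2,\beta_3$. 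Evaluating $\calR$ on each $e^{ij}=\tfrac12(\alpha_k\pm\beta_k)$ then yields \Ref{curv-mx} with no component-by-component curvature computation. What this buys is that the collapse of the mixed $\HH$--$\VV$ block to a single coefficient $r$ --- the step you flag as the main bookkeeping obstacle --- is explained by the double repeated-eigenvalue structure of $W^\pm$, rather than being extracted from cancellations in \Ref{conn} and \Ref{rels}. Your direct route, on the other hand, is what produces the explicit formulas \Ref{curv-ind} for $b,c,d,r$ in terms of $\phi,\psi,Q,R^h$, which the first proof does not give and which are needed later.
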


We give two proofs of this result. The first is more algebraic and conceptual, given in terms of the curvature operator,
while the second is more computational, involving individual curvature components. We include the latter since it gives the functions $b$, $c$, $d$, $r$ directly as curvature components, given in terms of $\phi$, $\psi$ and $Q$ and the curvature of $h$. Our computations in relation to the transgression of the equivariant $L$-form refer to particular curvature components.

\subsubsection{First proof}

The first proof is as follows.\vspace{5pt}

\noindent
{\em Proof I.} let
\[
\alpha_1=e^{12}+e^{34},\quad \alpha_2=e^{13}-e^{24},\quad \alpha_3=e^{14}+e^{23},
\]
be the induced (constant length, orthogonal) basis for the self-dual $2$-forms,
and $\beta_i$, $i=1,2,3$ the corresponding basis for the anti-self-dual
$2$-forms, obtained from $\alpha_i$ by changing the sign of the first summand.

Consider the curvature operator $\calR:\Lambda^2(T^*M)\to \Lambda^2(T^*M)$ of $g$,
which is given, with respect to the decomposition into self- and anti-self-dual $2$-forms,
by \[\calR=\begin{bmatrix} W^++\fr s{12} & B\\ C & W^-+\fr s{12}\end{bmatrix},\] where $W^\pm$ are the
anti/self-dual Weyl tensors, $s$ is the scalar curvature and $B$, $C$ will be defined below via the
traceless Ricci tensor.

Now as $g$ is K\"ahler (on a $4$-manifold), $W^+$ is diagonal with respect to the $\al_i$,
where $\al_1$ is, up to normalization, the K\"ahler form,
with eigenvalues $\{s/6,-s/12, -s/12\}$ (see for example Proposition 9.8 of \cite{der}).
Thus $W^++s/12$ has eigenvalues $\{s/4, 0, 0\}$.
We will write f:=s/4.

SKR metrics exhibit the phenomena of duality (see Proposition 5.1 of \cite{mas1}),
so that aside from $g$, the metric $\tilde{g}=g/(\t-\bar{c})^2$ is also
SKR, and in particular K\"ahler with respect to a complex structure compatible
with the opposite orientation. Thus $\tilde{W}^+$ has a similar eigenvalue
structure with two equal eigenvalues. But the conformal covariance of the
anti/self-dual Weyl tensors (considered as operators on $2$-forms), together
with the fact that the complex structures associated to $g$ and $\tilde{g}$
determine opposite orientations, yield the relation $\tilde{W}^+=2(\t-\bar{c})^2W^-$.
Thus $W^-$, and, more importantly, $W^-+s/12$, has its eigenvalues in the form $\{\tilde{f},z, z\}$,
where $\tilde{f}$ and $z$ are determined by the scalar curvatures of $g$,
$\tilde{g}$ and the conformal factor $(\t-\bar{c})^2$.

Let $\ri_0$ be the traceless Ricci tensor of $g$, also regarded as a self-adjoint
traceless endomorphism of the cotangent bundle induced from the traceless Ricci
endomorphism. Define an operator on $2$-forms by
$\ri_0(e^{ij})=\frac 12(\ri_0(e^i)\wedge e^j+e^i\wedge \ri_0(e^j))$.
Letting $\Lambda^\pm(T^*M)$ denote the anti/self-dual $2$-forms,
this operator induces $B:\Lambda^-(T^*M)\to\Lambda^+(T^*M)$ by linearity,
as well as $C$, which is the adjoint of $B$ (see Proposition 3.2.2.1 of \cite{sch}).

Directly from the definition of an SKR metric, the Ricci endomorphism,
and hence the traceless Ricci endomorphism,  has
two eigenvalues, denoted $\lambda_0$, with eigendistribution $\mathrm{span}(e^1, e^2)$ dual
to $\HH$, and $-\lambda_0$, with eigendistribution $\mathrm{span}(e^3, e^4)$ dual to $\VV$.

Thus $\ri_0$, as an operator on $2$-forms, sends $e^{12}$ to $\lambda_0e^{12}$,
$e^{34}$ to $-\lambda_0e^{34}$, while $e^{13}$, $e^{24}$, $e^{14}$ and $e^{23}$
are all sent to zero (since, for example,
$\frac 12(\lambda_0e^1\wedge e^3+e^1\wedge (-\lambda_0e^3))=0$).
Therefore $B\beta_1=-\lambda_0\alpha_1$, while $B$ sends
the other $\beta_i$'s to $0$, and $C$ acts in an analogous manner
on the $\alpha_i$'s.

With these preliminaries, we can now compute the action of the curvature
operator on the $e^{ij}$ to obtain the curvature matrix entries. First,
\[
\begin{aligned}
\calR(2e^{12})&=\calR(\alpha_1-\beta_1)=(W^++s/12)(\alpha_1)-(W^-+s/12)(\beta_1)+C(\alpha_1)-B(\beta_1)\\
&=f\alpha_1-\tilde{f}\beta_1-\lambda_0\beta_1+\lambda_0\alpha_1
=(f+\tilde{f}+2\lambda_0)e^{12}+(f-\tilde{f})e^{34},\\
\calR(2e^{34})&=\calR(\alpha_1+\beta_1)=(W^++s/12)(\alpha_1)+(W^-+s/12)(\beta_1)+C(\alpha_1)+B(\beta_1)\\
&=f\alpha_1+\tilde{f}\beta_1-\lambda_0\beta_1-\lambda_0\alpha_1
=(f-\tilde{f})e^{12}+(f+\tilde{f}-2\lambda_0)e^{34},\\
\end{aligned}
\]
which is in line with \Ref{curv-mx}, with $b=(f+\tilde{f}+2\lambda_0)/2$, $c=(f-\tilde{f})/2$ and
$d=(f+\tilde{f}-2\lambda_0)/2$. Similarly, we compute the other matrix coefficients, in which
many more zeros figure in, giving the simpler form of the upper right quarter of \Ref{curv-mx}:
\[
\begin{aligned}
\calR(2e^{13})&=\calR(\alpha_2-\beta_2)=(W^++s/12)(\alpha_2)-(W^-+s/12)(\beta_2)+C(\alpha_2)-B(\beta_2)\\
&=0-z\beta_2+0-0=z(e^{13}+e^{24}),\\
\calR(2e^{24})&=\calR(-(\alpha_2+\beta_2))=-((W^++s/12)(\alpha_2)+(W^-+s/12)(\beta_2)+C(\alpha_2)+B(\beta_2))\\
&=-(0+z\beta_2+0+0)=-(-z(e^{13}+e^{24}))=z(e^{13}+e^{24})\\
\calR(2e^{14})&=\calR(\alpha_3-\beta_3)=(W^++s/12)(\alpha_3)-(W^-+s/12)(\beta_3)+C(\alpha_3)-B(\beta_3)\\
&=0-z\beta_3+0-0=z(e^{14}-e^{23})\\
\calR(2e^{23})&=\calR(\alpha_3+\beta_3)=(W^++s/12)(\alpha_3)+(W^-+s/12)(\beta_3)+C(\alpha_3)+B(\beta_3)\\
&=0+z\beta_3+0+0=-z(e^{14}-e^{23})\\
\end{aligned}
\]
This fits \Ref{curv-mx} with $r=z/2$. As the curvature matrix is antisymmetric for an orthonormal
basis, this concludes the first proof.

\subsubsection{Curvature components}
The second proof depends on the following lemma, which holds in any dimension, and
relations \Ref{curv-ind} below.
\begin{Lemma}\lb{curv-formu}
Let $g$ be a fibered SKR metric, with $v$, $u$, $w$, $w'$ be as in \eqref{conn}.
Then, on the regular set of $\t$,
\be\lb{curv}
\begin{gathered}
\begin{aligned}
&\lan R(v,u)v,u\ran=-\psi'Q^2,\quad &&\lan R(w,v)w',v\ran=\lan R(w,u)w',u\ran=-\phi'Qg(w,w')/2,\\
&\lan R(w,Jw)u,v\ran=\phi'Qg(w,w),\quad  &&\lan R(w,v)w',u\ran=-\phi'Q\om(w,w')/2,\\
&\lan R(w,Jw)Jw',w'\ran=&&\\[4pt]
\end{aligned}
\end{gathered}
\end{equation}
\vspace{-18.333pt}
\begin{equation*}
\begin{aligned}
\qquad\left|Q/\phi\right|h(R^h(w,Jw)Jw',w')+2(\phi^2/Q)\left(g(w,w')^2+\om(w,w')^2+g(w,w)g(w',w')\right),
\end{aligned}
\end{equation*}
with angle brackets standing for $g$. Additionally, $\lan R(a,b)c,d\ran=0$ when either exactly three of the vector
fields $a,b,c,d$ are taken from the pair $\{w,Jw\}$, for a horizontal lift $w$, or exactly three of them are from the pair $\{v, u\}$.
\end{Lemma}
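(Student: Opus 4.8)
The plan is to obtain every component listed in Lemma~\ref{curv-formu} directly from the Levi-Civita connection formulas \eqref{conn}, the only additional inputs being: the commutation relations $[v,u]=[w,v]=[w,u]=[w',v]=[w',u]=0$, which also hold with $w$ replaced by its horizontal lift $Jw$; the fact that $\phi$, $\psi$, $Q$, being locally functions of $\t$, satisfy $v\Psi=\Psi'Q$ and $u\Psi=w\Psi=0$ (the last two because $u,w\perp v=\nab\t$); the relations \eqref{rels}, especially $Q\phi'=2(\psi-\phi)\phi$, which forces most intermediate terms to cancel; the K\"ahler identities $DJ=JD$ on the base $(N,h)$ and the $g$-skewness of $J$; and the fact that on $\HH$ one has $g=2|\t-\bar c|\,\pi^{*}h=|Q/\phi|\,\pi^{*}h$. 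Throughout one uses that $g$ is parallel and the pair symmetry $\lan R(a,b)c,d\ran=\lan R(c,d)a,b\ran$; all signs below are meant with respect to the curvature convention already fixed in \eqref{curv-mx}.

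First I would dispose of the purely vertical component: since $[v,u]=0$, the operator $\nab_v\nab_u-\nab_u\nab_v$ sends $v$ to $(v\psi)u-(u\psi)v=\psi'Q\,u$, whence $\lan R(v,u)v,u\ran=-\psi'Q^{2}$. For the mixed components I would substitute the last relation of \eqref{conn} for $\nab_ww'$ (and for $\nab_w(Jw')$, together with $\nab_vw'=\phi w'$ and $\nab_uw'=\phi Jw'$) into $R(w,v)w'=\nab_w\nab_vw'-\nab_v\nab_ww'$; here $v\bigl(g(w,w')\bigr)=2\phi\,g(w,w')$ (metric compatibility and \eqref{conn}), and $v(\phi/Q)$ together with the remaining pieces recombine through $Q\phi'=2(\psi-\phi)\phi$ so that $R(w,v)w'=(\psi-\phi)(\phi/Q)\bigl(g(w,w')v+\om(w,w')u\bigr)$, and likewise $R(w,u)w'=(\psi-\phi)(\phi/Q)\bigl(-\om(w,w')v+g(w,w')u\bigr)$; pairing with $v$ or $u$, using $\lan v,v\ran=\lan u,u\ran=Q$ and $(\psi-\phi)\phi=Q\phi'/2$, yields the three stated identities. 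The component $\lan R(w,Jw)u,v\ran$ I would obtain by pair symmetry from $R(v,u)w$: here $\nab_v\nab_uw-\nab_u\nab_vw=\phi'Q\,Jw$ (the two $\phi^{2}Jw$ terms cancelling), so that $\lan R(w,Jw)u,v\ran=\lan R(u,v)w,Jw\ran=\phi'Q\,g(w,w)$.

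The remaining identity, for $\lan R(w,Jw)Jw',w'\ran$, is the only one in which the base curvature $R^{h}$ appears, and is where the real work lies. The new ingredient is that $w$ and $Jw$, though both horizontal lifts, do not commute: from torsion-freeness and \eqref{conn},
\[
[w,Jw]=\nab_w(Jw)-\nab_{Jw}w=\bigl(JD_ww-D_{Jw}w\bigr)-2(\phi/Q)\,g(w,w)\,u ,
\]
where $JD_ww-D_{Jw}w$ is the horizontal lift of the base bracket $[\bar w,J\bar w]$. Feeding this, along with the expressions for $\nab_w(Jw')$, $\nab_{Jw}(Jw')$ from \eqref{conn} and $\nab_u(Jw')=-\phi w'$, into $R(w,Jw)Jw'=\nab_w\nab_{Jw}Jw'-\nab_{Jw}\nab_wJw'-\nab_{[w,Jw]}Jw'$, the horizontal outputs split into two packets: one assembles, via $DJ=JD$ and the fact that the conformal factor $2|\t-\bar c|=|Q/\phi|$ is killed by horizontal vector fields, into the base curvature term $R^{D}(\bar w,J\bar w)(J\bar w')$, contributing $|Q/\phi|\,h(R^{h}(w,Jw)Jw',w')$ after pairing with $w'$; the other collects the quadratic ``twist'' contributions produced by $\nab_{\mathrm{hor}}$ applied to the vertical parts of $\nab_{\mathrm{hor}}(Jw')$ and by the $\nab_u(Jw')$ in the bracket term, which, via $\nab_wv=\phi w$, $\nab_wu=\phi Jw$ and the identities $g(w,Jw')=-\om(w,w')$, $\om(w,Jw')=g(w,w')$, sum to $2(\phi^{2}/Q)\bigl(g(w,w')^{2}+\om(w,w')^{2}+g(w,w)g(w',w')\bigr)$. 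I expect the bookkeeping of horizontal versus vertical components, and the collection of signs in this last step, to be the main obstacle; the rest is short.

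Finally, the vanishing statements require no further computation, only the structural facts already extracted: by \eqref{conn}, $R(\VV,\VV)$ preserves the splitting $TM|_{M'}=\HH\oplus\VV$ (one has $R(v,u)v,R(v,u)u\in\VV$ and $R(v,u)w\in\HH$), whereas $R(\HH,\VV)$ interchanges $\HH$ and $\VV$ (e.g. $R(w,v)w'\in\VV$ while $R(w,v)v\in\HH$). Hence, for $a,b,c,d\in\{v,u,w,Jw\}$: if exactly three are vertical, pair symmetry lets us put two of them in the first slot pair, and then $R(\text{vert},\text{vert})$ sends the remaining vertical argument into $\VV$, orthogonal to the one horizontal argument; if exactly three are horizontal, pair symmetry puts one horizontal and one vertical in the first pair, and $R(\text{hor},\text{vert})$ sends the remaining horizontal argument into $\VV$, again orthogonal to the last horizontal argument. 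Either way the component is zero. For a reducible SKR metric ($\phi\equiv0$, with the factor $2|\t-\bar c|$ replaced by $1$) the same arguments apply verbatim, all $\phi$-dependent terms simply dropping out.
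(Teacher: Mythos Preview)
Your proposal is correct and follows essentially the same approach as the paper: both derive the curvature components by direct computation from the connection formulas \eqref{conn}, using the commutation relations $[v,u]=[w,v]=[w,u]=0$ and the identities \eqref{rels} (especially $Q\phi'=2(\psi-\phi)\phi$). The paper in fact carries out only two sample vanishing cases explicitly and declares the rest ``tedious but straightforward''; your write-up supplies considerably more detail, and your treatment of the vanishing statements---deducing them from the block structure of $R$ relative to $\HH\oplus\VV$ rather than by individual component checks---is a mild streamlining, though it rests on the very same computations. (Do keep in mind the paper's curvature sign convention $R(X,Y)=[\nab_Y,\nab_X]+\nab_{[X,Y]}$, visible in its proof; a couple of your intermediate sign choices should be reconciled with it.)
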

In the four dimensional case, all nonzero curvature components for the orthonormal frame of Proposition \ref{curv-mtx}
are determined by those above via the curvature symmetries of a K\"ahler metric. Note
also that the third relation above, given just for convenience in checking relations \Ref{curv-ind} below,
is a consequence of the second and fourth via the first Bianchi identity.

The proof of Lemma \ref{curv-formu} is a tedious but straightforward computation based on \Ref{conn} and \Ref{SKR}.
It also employs some known properties of SKR metrics, such as relations \Ref{rels} as well as $[v,u]=0$, and that $Jw$ is a horizontal lift of a base vector field if $w$ is. As the curvature vanishing statements at the end of the lemma are important, we prove only some cases of those.
\begin{proof}
$\lan R(w,u)u,v\ran=\lan([\n_u,\n_w]+\n_{[w,u]})u,v\ran$ and $[w,u]=0$ by \Ref{conn}, which
also shows that $[\n_u,\n_w]u=\n_u(\phi Jw)-\n_w(-\psi v)=(d_u\phi)Jw+\phi(-\phi w)+\psi\phi w$,
because the derivatives of $\phi$, $\psi$ in horizontal directions vanish. Our curvature component
vanishes as all three summands in the last expression are orthogonal to $v$.

Next, $\lan R(w,Jw)w,v\ran=\lan([\n_{Jw},\n_w]+\n_{[w,Jw]})w,v\ran$, and here we compute each term
separately using \Ref{conn}. $\n_{Jw}\n_ww=\n_{Jw}(D_ww-(\phi/Q)g(w,w)v)=D_{Jw}D_ww-(\phi/Q)(g(Jw,D_ww)v+\om(Jw,D_ww)u)
-(\phi/Q)(d_{Jw}(g(w,w))v+g(w,w)\phi Jw)$. Notice that only the two terms with $v$ are not orthogonal to $v$.
Similarly, the only term of $\n_w\n_{Jw}w$ not orthogonal to $v$ is the term $-(\phi/Q)(g(w,D_{Jw}w)v$.
Now
\[
\n_{[Jw,w]}w=\n_{\n_{Jw}w-\n_wJw}w=\n_{D_{Jw}w-(\phi/Q)\om(Jw,w)u-D_{w}Jw-(\phi/Q)\om(w,Jw)u}w
\]
which breaks up into eight terms, with only two not orthogonal to $v$: $(\phi/Q)g(D_{Jw}w,w)v-(\phi/Q)g(D_w{Jw},w)v$.
Of the five terms not orthogonal to $v$ mentioned above, the first and fifth cancel since $h$ is K\"ahler so that $J$
commutes with $D_w$, and since $g$ is hermitian. The second, third and fourth term cancel since
$d_{Jw}(g(w,w))=2(\t-\bar{c})d_{Jw}h(w,w)$ and $d_{Jw}h(w,w)=2h(D_{Jw}w,w)$.
\end{proof}

It now follows that relative to the orthonormal frame of Proposition \ref{curv-mtx}, we have, directly from \eqref{curv}
and the curvature symmetries
\be\lb{curv-ind}
\begin{aligned}
&b:=R_{1212}=-|\phi/Q|R^h_{1212}-4\phi^2/Q,\quad c:=R_{1234}=-\phi',\quad d:=R_{3434}=-\psi',\\[4pt]
&r:=R_{1313}=R_{1324}=-R_{2314}=R_{2424}=R_{1414}=R_{2323}=-\phi'/2.
\end{aligned}
\end{equation}
In the reducible case, formulas \Ref{curv-ind} are valid except that in the first term of $b$ the factor
$|\phi/Q|$ does not appear. Additionally, as $\phi$ vanishes identically in the reducible case, $c=r=0$.

\subsubsection{The second proof}

We now give the second proof.\vspace{5pt}

\noindent
{\em Proof II.} The second proof of Proposition \ref{curv-mtx} now follows from \Ref{curv-ind} along with Lemma
\ref{curv-formu}. For example, to compute $R_{13}$ from its definition, one has to sum terms of the form $R_{13kl}e^{kl}$
with $k<l$. There are $6$ such terms. Those with coefficients $R_{1312}$, $R_{1334}$ vanish by the last clause of this lemma.
Those with coefficient $R_{1314}$, $R_{1323}$ vanish by the fourth (with $w=w'$) and second (with $w'=Jw$) relations in \Ref{curv}. The remaining two terms are exactly $re^{12}$ and $re^{34}$, by the last line of \Ref{curv-ind}.

\subsection{Equivariant curvature}
We now compute the equivariant curvature of a fibered SKR metric.

Let $J_\cal{H}$, $J_\cal{V}$ be the restriction of $J$ to the $J$-invariant distributions $\cal{H}$, $\cal{V}$.
Recall that $X:=u$, the generator of our circle action.
We have $\nab X=\nab u=J\nab v=\phi J_\cal{H}+\psi J_\cal{V}$, as $\nab v$ is the operator corresponding to the Hessian
of $\t$, whose eigenvalues are $\phi$ and $\psi$. Thus
\be\lb{nab-X}
[(\nab X)_{ij}]=\begin{bmatrix}
  0   & \phi &   0   &    0  \\[4pt]
-\phi &   0  &   0   &    0  \\[4pt]
  0   &   0  &   0   &  \psi \\[4pt]
  0   &   0  & -\psi &    0
\end{bmatrix}.
\end{equation}

Setting $\aA=\phi+ be^{12}+ce^{34}$, $\fF=\psi+ce^{12}+de^{34}$,
$\sS=e^{13}+e^{24}$, $\tT=e^{14}-e^{23}$, it follows from \Ref{curv-mx}
and \Ref{nab-X} that
the equivariant curvature matrix takes the form
\[
[R_\grg(X)_{ij}]=[(R-\nab X)_{ij}]=\begin{bmatrix}
  0    &  \aA  & r\sS  & r\tT \\[4pt]
-\aA   &   0   & -r\tT & r\sS \\[4pt]
-r\sS  &  r\tT &   0   &  \fF \\[4pt]
-r\tT  & -r\sS &  -\fF &   0
\end{bmatrix}.
\]

\subsection{Equivariant $L$-form}\lb{eq-L}
In this subsection we compute the equivariant $L$-form $L_\grg(g)$ of a
fibered SKR metric on a $4$-manifold, following a method appearing in \cite{goe}.

First, we compute the eigenvalues of $R_\grg(X)$, taking into account the vanishing, in the
characteristic polynomial, of terms of degree greater than four.
The eigenvalues are given by
\be\lb{eigen}
\begin{gathered}
\{\lam_i\}=\{0,0,i\sqrt{A},-i\sqrt{A}\}\qquad \text{for}\\[4pt]
A=2r^2(\sS^2+\tT^2)+(\aA^2+\fF^2).
\end{gathered}
\end{equation}
The square root of $A$ can be determined explicitly as follows.
Let $e^{ijkl}=e^i\we e^j\we e^k\we e^l$.
Then,
\be\lb{square}(\al+\beta e^{12}+\gamma e^{34}+\delta e^{1234})^2=
\al^2+2\al\beta e^{12}+2\al\gamma e^{34}+2(\al\delta+\beta\gamma)e^{1234}.
\end{equation}
Now $\sS^2+\tT^2=-4e^{1234}$,
so that
\[
A=\phi^2+\psi^2+2(\phi b+\psi c)e^{12}+2(\phi c+\psi d)e^{34}+2(bc+cd-4r^2)e^{1234}.
\]
By equating the coefficients of $A$ with the right hand side of \Ref{square}, and
solving from left to right for $\al$, $\beta$, $\gamma$, $\delta$, we see that
the coefficients of $\sqrt{A}$ are given by
\be\lb{abgd}
\begin{aligned}
&\al=\sqrt{\phi^2+\psi^2},\quad \beta=\frac{b\phi+c\psi}{\sqrt{\phi^2+\psi^2}},\quad \gamma=\frac{c\phi+d\psi}{\sqrt{\phi^2+\psi^2}},\\[4pt]
&\delta=\frac{(cd-4r^2)\phi^2+(bc-4r^2)\psi^2-\phi\psi(bd+c^2)}{(\phi^2+\psi^2)^{3/2}}.
\end{aligned}
\end{equation}

Recall now the $L$-function $\tilde{f}(x)=x/(2\tanh(x/2))$. Let $\bar{f}(x)=x/(2\tan(x/2))$.
We have shown
\begin{Proposition}\lb{P:LXM}
The equivariant $L$-form of a fibered SKR metric $g$ on a $4$-manifold $M$ is given by
\be\lb{LXM}
L_\grg(g)(X)=\mathrm{det}^{1/2}\tilde{f}(R_\grg(X))=
\bar{f}(\sqrt{A}).
\end{equation}
\end{Proposition}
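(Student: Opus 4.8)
The plan is to obtain \eqref{LXM} directly from the eigenvalue computation \eqref{eigen}, the defining formula \eqref{E:L form} for the equivariant $L$-form, and the elementary identity $\tanh(iz)=i\tan z$. The essential bookkeeping is the Chern-root description of $\det^{1/2}$ of a function of a skew-symmetric matrix, exactly as in \cite[Ch.~7]{bgv}: if a $2m\times 2m$ skew-symmetric matrix $B$, with entries in the commutative ring of even-degree equivariant forms, has characteristic polynomial $\prod_{j=1}^{m}(\lambda^{2}+\mu_{j})$, and $h$ is an even analytic function with $h(0)=1$, then ${\det}^{1/2}h(B)=\prod_{j=1}^{m}h\bigl(i\sqrt{\mu_{j}}\bigr)$, the branches of the square roots being pinned down so that the degree-zero part takes its classical value. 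Because $\tilde f$ is even, $\tilde f(B)$ depends on $B^{2}$ only, so this expression is unambiguous, and since $\dim M=4$ every power series in positive-degree forms that arises collapses to a polynomial.

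For the matrix $B=R_\grg(X)$, by \eqref{eigen} the characteristic polynomial is $\lambda^{2}(\lambda^{2}+A)$, so its Chern roots are $\mu_{1}=0$ and $\mu_{2}=A$, where $A=2r^{2}(\sS^{2}+\tT^{2})+(\aA^{2}+\fF^{2})$; by \eqref{abgd} the degree-zero part of $A$ equals $\phi^{2}+\psi^{2}$, which is positive on the regular set $M'$, so $\sqrt A$ is a well-defined even form. Substituting $h=\tilde f$ then gives
\[
L_\grg(g)(X)={\det}^{1/2}\tilde f\bigl(R_\grg(X)\bigr)=\tilde f\bigl(i\sqrt{\mu_{1}}\bigr)\,\tilde f\bigl(i\sqrt{\mu_{2}}\bigr)=\tilde f(0)\,\tilde f\bigl(i\sqrt A\bigr),
\]
and since $\tilde f(0)=\lim_{x\to 0}\frac{x/2}{\tanh(x/2)}=1$ while $\tanh(iz)=i\tan z$ yields
\[
\tilde f\bigl(i\sqrt A\bigr)=\frac{i\sqrt A/2}{\tanh\bigl(i\sqrt A/2\bigr)}=\frac{i\sqrt A/2}{i\tan\bigl(\sqrt A/2\bigr)}=\frac{\sqrt A/2}{\tan\bigl(\sqrt A/2\bigr)}=\bar f\bigl(\sqrt A\bigr),
\]
we conclude $L_\grg(g)(X)=\bar f(\sqrt A)$, which is \eqref{LXM}.

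The only genuinely delicate point is justifying the Chern-root manipulation for a matrix whose entries are mixed-degree forms rather than scalars, together with the correct choice of sign for ${\det}^{1/2}$; both are handled precisely as in ordinary Chern--Weil theory, using the evenness of $\tilde f$ to reduce everything to $R_\grg(X)^{2}$ and the nilpotence of positive-degree forms in dimension four so that all series terminate. Beyond \eqref{E:L form}, \eqref{eigen} and \eqref{abgd}, nothing further is required.
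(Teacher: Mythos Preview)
Your argument is correct and is essentially the paper's own proof: both compute the eigenvalues \eqref{eigen} of $R_\grg(X)$, use that $\det^{1/2}\tilde f(R_\grg(X))=\tilde f(0)\,\tilde f(i\sqrt A)$ via the standard Chern-root reduction for an even function of a skew matrix, and then convert $\tilde f(i\sqrt A)$ to $\bar f(\sqrt A)$ through $\tanh(iz)=i\tan z$ and $\tilde f(0)=1$. Your write-up is more explicit about the well-definedness of $\sqrt A$ (via the positivity of its degree-zero part $\phi^{2}+\psi^{2}$ on $M'$) and the nilpotence that makes the series terminate, but the route is the same.
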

More explicitly, we have
\begin{equation*}
\begin{aligned}
\bar{f}\big(\sqrt{A}\big)&=\bar{f}(\al+\beta e^{12}+\gamma e^{34}+\delta e^{1234})\\
&=\bar{f}(\al)+\bar{f}'(\al)(\beta e^{12}+\gamma e^{34}+\delta e^{1234})+
\bar{f}''(\al)(\beta e^{12}+\gamma e^{34}+\delta e^{1234})^2/2\\
&=\bar{f}(\al)+\bar{f}'(\al)(\beta e^{12}+\gamma e^{34}+\delta e^{1234})
+\bar{f}''(\al)\beta\gamma e^{1234},
\end{aligned}
\end{equation*}
so that
\begin{Corollary}\lb{L-expl}
The explicit form of the degree four component of the equivariant $L$-form of a fibered SKR metric $g$ on a $4$-manifold $M$ with special K\"ahler-Ricci potential $\t$ is
\begin{equation}\lb{L-explicit}
\begin{aligned}
L_\grg(g)(X)_{[4]}=\left(\bar{f}'(\al)\delta+\bar{f}''(\al)\beta\gamma\right)e^{1234},
\end{aligned}
\end{equation}
where $\bar{f}$ is as above and $\al$, $\beta$, $\gamma$, $\delta$ are given in \Ref{abgd},
with $\phi$, $\psi$ the eigenfunctions of $\nab d\t$,
and $r$, $b$, $c$, $d$ the curvature components given by \eqref{curv-ind} (or its version
for a reducible SKR metric),
which ultimately depend on a single function of $\t$ and the base curvature $R^h_{1212}$.
\end{Corollary}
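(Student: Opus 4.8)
The plan is to deduce the formula essentially by inspection from the expansion of $\bar{f}(\sqrt{A})$ displayed immediately before the statement, together with Proposition~\ref{P:LXM}; the argument is a bookkeeping of geometric degrees, while the genuinely substantial work --- the explicit curvature matrix \eqref{curv-mx} supplied by Proposition~\ref{curv-mtx} and Lemma~\ref{curv-formu}, the quantity $A$ of \eqref{eigen}, and its square root \eqref{abgd} --- is already in place.

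First I would recall from Proposition~\ref{P:LXM} that $L_\grg(g)(X)={\det}^{1/2}\tilde{f}(R_\grg(X))=\bar{f}(\sqrt{A})$, where $\sqrt{A}=\al+\zeta$ with $\zeta:=\beta e^{12}+\gamma e^{34}+\delta e^{1234}$ and the functions $\al,\beta,\gamma,\delta$ are given by \eqref{abgd}. The point to check with some care is that the functional calculus $\bar{f}(\al+\zeta)$ collapses to a finite Taylor polynomial: $\zeta$ is an inhomogeneous form of strictly positive even geometric degree on a $4$-manifold, so by \eqref{square} (with vanishing scalar part) $\zeta^2=2\beta\gamma\,e^{1234}$, while $\zeta^3=0$ since any triple product either repeats a basic one-form or lands in geometric degree larger than $4$. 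Hence, with $\bar{f}(x)=x/(2\tan(x/2))$ an even function regarded as analytic near the scalar $\al=\sqrt{\phi^2+\psi^2}$,
\[
\bar{f}(\al+\zeta)\ =\ \bar{f}(\al)\ +\ \bar{f}'(\al)\,\zeta\ +\ \tfrac12\,\bar{f}''(\al)\,\zeta^2\ =\ \bar{f}(\al)\ +\ \bar{f}'(\al)\big(\beta e^{12}+\gamma e^{34}+\delta e^{1234}\big)\ +\ \bar{f}''(\al)\,\beta\gamma\,e^{1234},
\]
which is precisely the expansion displayed before the statement. I would also recall the step identifying ${\det}^{1/2}\tilde{f}(R_\grg(X))$ with $\bar{f}(\sqrt{A})$: it uses the eigenvalue list $\{0,0,i\sqrt{A},-i\sqrt{A}\}$ of \eqref{eigen} together with the identity $\tilde{f}(ix)=\bar{f}(x)$ and the evenness of $\bar{f}$.

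It then remains to split the last display according to geometric degree: $\bar{f}(\al)$ has degree $0$; $\bar{f}'(\al)\beta e^{12}$ and $\bar{f}'(\al)\gamma e^{34}$ have degree $2$; and the only contributions of geometric degree $4$ are $\bar{f}'(\al)\delta\,e^{1234}$ and $\bar{f}''(\al)\beta\gamma\,e^{1234}$. Summing the last two gives $L_\grg(g)(X)_{[4]}=\big(\bar{f}'(\al)\delta+\bar{f}''(\al)\beta\gamma\big)e^{1234}$, which is \eqref{L-explicit}; inserting \eqref{abgd} and then the curvature components \eqref{curv-ind} (or their reducible versions) rewrites the coefficient in terms of $\phi$, $\psi$ and the single base-curvature component $R^h_{1212}$. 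I do not expect a genuine obstacle here: the one mildly delicate point is the truncation of the functional calculus --- that is, the vanishing $\zeta^3=0$ and the fact that no lower-degree cross terms survive into geometric degree $4$ --- with everything else following directly from the results already established.
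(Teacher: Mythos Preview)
Your proposal is correct and follows essentially the same route as the paper: the paper's proof is precisely the Taylor expansion of $\bar{f}(\al+\zeta)$ displayed immediately before the corollary, and your argument reconstructs this with a bit more explicit justification for the truncation $\zeta^3=0$ and the identification $\tilde{f}(i\sqrt{A})=\bar{f}(\sqrt{A})$.
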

Since $\t$ is a moment map for the K\"ahler form of $g$, in the case where $M$ is a disc bundle,
integration of \Ref{L-explicit} can be reduced to integration over the base Riemann surface via
the Duisteramaat-Heckman theorem, applied to a compact manifold obtained using Lerman's symplectic
cut along $\tau^{-1}(0)$. A variant of this procedure was used \cite{mas}.

Note that the above derivation also proves the general characteristic form
formula $\mathrm{det}^{1/2}\tilde{f}(R_X)=\tilde{f}(0)\tilde{f}(i\sqrt{A})$ for a fibered SKR metric
on a $4$-manifold, whenever $\tilde{f}$ is the germ of an even real-analytic function.

Finally, we note that for reducible fibered SKR metrics, even though $A$ is nontrivial,
$L_\grg(g)(X)_{[4]}=0$. This follows from the corollary, because we have seen that in
this case $\phi$, $c$ and $r$ are zero and hence so are $\beta$ and $\delta$.

\subsection{The formula for the equivariant $L$-form's transgression of a fibered SKR metric}

Before giving our main formula concerning the transgression of the $L$-form on a fibered SKR $4$-manifold,
we note the following general remark.
\begin{Remark}\lb{gp-exp}
Suppose a Lie group $G$ acts by isometries on a manifold with boundary $(M,g)$, and let $\mathbf{n}$ be a 
unit vector field normal to the boundary. Then the normal geodesic flow of $\mathbf{n}$ defines a $G$-equivariant collar.
In fact, the collar is given by $\psi:(y,t)\to \exp_y(t\mathbf{n})$, $y\in\pM$ with $\exp$
the exponential map of $g$. As the group action preserves the boundary, the normal is $G$-invariant.
The $G$-equivariance of $\psi$ with respect to the action \Ref{E:action collar} follows directly from the formula
\be\lb{iso-exp}
	F\circ\exp_y(\mathbf{n})=\exp_{F(y)}\circ DF_y(\mathbf{n}),
\end{equation}
for an isometry $F$ (see \cite{pet}, Section 10.1).

Now let $\mathrm{Exp}(x,w)=\exp_xw$ be the induced map on a neighborhood
of the zero section in $TM$. Restricting this map to the normal bundle, equipped with the norm
induced by the metric, the Gauss Lemma asserts that the $\mathrm{Exp}$ images of constant norm (i.e. constant $t$
for sufficiently small $t$) level sets are orthogonal to the normal geodesics.
If $g$ is SKR on a $4$-manifold with $U(1)$ generator $X$, as $X$ is orthogonal
to $e_4$ even off the boundary,
the group action preserves such a level set, and hence $e_4$, near the boundary.
Formula \Ref{iso-exp}, with $\mathbf{n}=e_4$, applied to a one parameter family of isometries in the flow of
$X$ shows that orbits of $X$ are mapped to other such orbits by the geodesic flow of $e_4$, and hence
$X$ itself is also preserved by this geodesic flow. This fact will be used in Subsection \ref{nabtX}.
\end{Remark}

\begin{Proposition}\lb{TLg}
Let $(M,g,\t)$ be a fibered SKR $4$-manifold with boundary, with $v=\n\t$ outward-pointing
along the boundary and a $U(1)$-action generated by $X=u:=Jv$. Set $U$ to be the $U(1)$-equivariant collar of $M$
obtained using the normal geodesic flow of $-v/|v|$. Suppose $g_0$ is a metric constructed by arbitrarily extending to $M$
the product metric on $U$, whose factors are the restriction of $g$ to $\pM$ and the standard metric on the real line.
Let also $\n^t$  be the linear family of connections \Ref{E:nt} connecting the Levi-Civita connections of $g$ and $g_0$. Then
the pull-back under the boundary inclusion $\imath:\pM\to  M$ of the degree $3$ component of the transgression of the equivariant $L$-form of $g$ for the family $\n^t$, is given in the irreducible case by
\be\lb{final-trns}
\begin{aligned}
\imath^*&TL_\grg(g,g_0)(X)_{[3]}=\bigintsss_0^1
\bigg[\exp\left(2\left(f(i\phi_0)
+f(it\psi_0)\right)\right)\cdot\\
&\cdot\bigg[4\fr{\psi_0}{\sqrt{Q}_0}f'(it\psi_0)\bigg(\big(t^2\fr{\phi_0^2}{Q_0}-R^0_{1212}\big)f'(i\phi_0)
-tR_{1234}f'(it\psi_0)
\bigg)\\
&+2\fr{\phi_0}{\sqrt{Q}_0}\mathlarger{\mathlarger{\sum}}_{m=0}^\infty \bigg(\fr{f^{(2m+2)}(0)}{(2m+1)!}
\bigg(\left((-1)^{m}R^0_{2323}
+(-1)^{m-1}t^2\fr{\phi_0\psi_0}{Q_0}
\right)
\!\mathlarger{\mathlarger{\sum}}_{\mathrm{odd}\ k=1}^{2m-1}\!\!
M_{k,m}(\phi_0,t\psi_0)\\
&+(-1)^{m+1}tR_{2314}\!\!\mathlarger{\mathlarger{\sum}}_{\mathrm{even}\ k=0}^{2m}
\!\!M_{k,m}(\phi_0,t\psi_0)\bigg)\bigg)
-2t\fr{\psi_0}{\sqrt{Q}_0}R_{1234}f''(it\psi_0)
\bigg]e^{123}\bigg]\,dt.
\end{aligned}
\end{equation}
Here $f=\log(x/(2\tanh(x/2)))/2$, $\phi_0$, $\psi_0$ are the eigenfunctions of the
special K\"ahler-Ricci potential $\t$ evaluated at $\t=0$,
$M_{k,m}(\phi_0,t\psi_0):=\phi_0^k(t\psi_0)^{2m-k}+(t\psi_0)^k\phi_0^{2m-k}$, $Q_0=g(\n\t,\n\t)|_{\t=0}$,
$R_{abcd}$, $R^0_{abcd}$ are the curvature components of $g$, $g_0$, respectively,
in the frame $\{e_i\}$ of Proposition \ref{curv-mtx}, also evaluated at $\t=0$, and
$e^{123}=e^1\we e^2\we e^3$ for the dual coframe $\{e^i\}$.

Explicitly, these curvature components are given by
$R_{1234}=-\phi'$, $R_{2314}=\phi'/2$, along with
$R^0_{1212}=2|\bar{c}|R^h_{1212}+3Q_0/(4\bar{c}^2)$
and $R^0_{2323}=-Q_0/(4\bar{c}^2)$.

On the other hand, in the case where $g$ is reducible, we have
\[ \imath^*TL_\grg(g,g_0)_{[3]}(X)=0.
\]
\end{Proposition}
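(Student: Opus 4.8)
The plan is to carry out the computation in two stages. First, I would assemble all the geometric ingredients at $\t=0$: the metric $g_0$ is a product near the boundary, its Levi-Civita connection $\n^0$ restricts to the boundary connection determined by the restriction of $g$ to $\pM$, and so $\Theta=\n^1-\n^0$ is computed from \Ref{conn} together with \Ref{E:nS-ncalS}-type second-fundamental-form corrections. On the boundary $\pM=\t^{-1}(0)$ the relevant curvature components $R^0_{abcd}$, $R_{abcd}$ are those of Proposition~\ref{curv-mtx}, specialized via \Ref{curv-ind} and \Ref{rels}; the stated explicit values $R_{1234}=-\phi'$, $R_{2314}=\phi'/2$, $R^0_{1212}=2|\bar c|R^h_{1212}+3Q_0/(4\bar c^2)$, $R^0_{2323}=-Q_0/(4\bar c^2)$ come from these formulas after using $Q=2(\t-\bar c)\phi$ at $\t=0$ (so $-\bar c$ plays the role of $\t-\bar c$) and the relation $dQ=2\psi\,d\t$. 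Here the insight of Moroianu \cite{mor} enters: the product metric $g_0$ near the boundary has its own SKR-like structure whose curvature one can pin down, and the $\phi_0$, $\psi_0$, $Q_0$ are exactly the boundary values of the eigenfunctions and of $Q$.

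Second, I would substitute into the general formula \Ref{transg} of Proposition~\ref{4transg}. The point is that along $\pM$ the vector field $X=u$ is nowhere zero and, by Remark~\ref{gp-exp}, both $e_4=-v/\sqrt Q$ and $X$ are preserved by the normal geodesic flow, so $\n^t X$ has the block-diagonal form \Ref{nab-X} with $\phi,\psi$ replaced by the $t$-dependent interpolants $\phi_0$ (constant in $t$ since it acts on $\HH$, where $\Theta$ contributes nothing to $\n^t X$ at the boundary — this is why $i\phi_0$ appears without a $t$) and $t\psi_0$ in the $\VV$-block. Then $\Tr[f(\n^t X)]=2(f(i\phi_0)+f(it\psi_0))$ gives the exponential prefactor; the two traces $\Tr[\Theta f'(\n^t X)]\cdot\Tr[f'(\n^t X)R^t]$ and $\Tr[(f^{[2]}(\n^t X)*\Theta)R^t]$ are then computed componentwise in the frame $\{e_i\}$. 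The first product yields the term with $4(\psi_0/\sqrt Q_0)f'(it\psi_0)\big((t^2\phi_0^2/Q_0-R^0_{1212})f'(i\phi_0)-tR_{1234}f'(it\psi_0)\big)$ and the last term with $-2t(\psi_0/\sqrt Q_0)R_{1234}f''(it\psi_0)$; the non-commutative second derivative $f^{[2]}(\n^t X)*\Theta$ produces, via \Ref{E:def of star} and the sum over $q$ in $H_n$, the double sum $\sum_{m\ge0}\frac{f^{(2m+2)}(0)}{(2m+1)!}(\cdots)$ with the $M_{k,m}$ polynomials, the parity of $k$ in the inner sum tracking whether $\Theta$ is sandwiched between an even or odd number of $\n^t X$ factors coming from the $\HH$- vs.\ $\VV$-blocks. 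Collecting the $e^{123}$-coefficient (all geometric-degree-$3$ forms on $\pM$ are multiples of $e^{123}$) and integrating in $t$ gives \Ref{final-trns}.

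For the reducible case the argument is immediate from the structure already established: by the last clause of \Ref{curv-ind} and the discussion after Corollary~\ref{L-expl}, $\phi\equiv0$ on $M'$, hence $\phi_0=0$, $c=R_{1234}=0$, $r=R_{2314}=0$, and $\Theta$ simplifies drastically (the off-diagonal $\HH$-$\VV$ coupling in \Ref{conn} disappears). Every term in \Ref{final-trns} then has a factor of $\phi_0$, $\psi_0 R_{1234}$, $R_{2314}$, or a power of $f'(i\phi_0)$ paired with a vanishing curvature coefficient; more structurally, one checks directly that both $\Tr[\Theta f'(\n^t X)]$ and $\Tr[(f^{[2]}(\n^t X)*\Theta)R^t]$ vanish in geometric degree $3$ because $\Theta$ and $R^t$ decouple into $\HH$- and $\VV$-blocks that cannot combine to give a nonzero supertrace-type contribution of the right degree. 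Hence $\imath^*TL_\grg(g,g_0)(X)_{[3]}=0$.

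I expect the main obstacle to be the bookkeeping in the non-commutative second-derivative term $\Tr[(f^{[2]}(\n^t X)*\Theta)R^t]$: one must carefully track, for each power $n$, how the fixed frame components of $\Theta$ (which mixes $\HH$ and $\VV$ only through the second-fundamental-form terms and the $-(\phi/Q)[\cdots]$ piece of \Ref{conn}) interleave with powers of the block-diagonal $\n^t X$ and the curvature $R^t$, and then resum over $q$ and $n$ to recognize the $M_{k,m}$ pattern. A secondary difficulty is justifying that the off-boundary behavior of $e_4$ and $X$ (needed so that $\n^t X$ really has the clean form \Ref{nab-X} in a neighborhood, not just on $\pM$) is as claimed — this is exactly what Remark~\ref{gp-exp} is for, and I would invoke it rather than re-derive it.
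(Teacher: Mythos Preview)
Your plan follows essentially the same route as the paper: compute $\imath^*\Theta$, $\nabla^t X\big|_{\pM}$, and $\imath^*R^t$ in the SKR frame $\{e_i\}$, substitute into \Ref{transg}, and collect the $e^{123}$ coefficient. Your identification of $\nabla^t X=\phi_0 J_{\mathcal H}+t\psi_0 J_{\mathcal V}$ on $\pM$, of $\Tr[f(\nabla^t X)]=2(f(i\phi_0)+f(it\psi_0))$, and of the $M_{k,m}$ pattern arising from the parity of $k$ in $H_n$ all match the paper's computation.

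There is, however, one substantive omission. You do not say how you will compute $\imath^*R^t$, and your description of ``the insight of Moroianu'' is not the one the paper actually uses. The paper decomposes $R^t=R^0+t\,d^{\nabla^0}\Theta+t^2\Theta^2$ and pulls back each piece separately: $A^1:=\imath^*R^0=R^{\pM}$ is the intrinsic boundary curvature; $A^3:=\imath^*\Theta^2$ is computed directly from the matrix \Ref{thet}; and the crucial middle piece $A^2:=\imath^*d^{\nabla^0}\Theta$ is identified via the Codazzi--Mainardi equation, giving $g(d^{\nabla^0}\Theta(e_i,e_j)e_k,e_4)=R_{ijk4}$ for $i,j,k<4$. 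This is what brings the $g$-curvature components $R_{1234}$, $R_{2314}$ into the answer (they come from $A^2$, not from $A^1$). A further Moroianu-type observation --- that $A^1$ and $A^3$ map $T_pM$ into $T_p\pM$, while the ``lower'' half $\Theta''$ of $\Theta$ annihilates $T\pM$ --- kills many trace terms for even $k$ at once. Without this decomposition your bookkeeping for $\Tr[(f^{[2]}(\nabla^t X)*\Theta)R^t]$ would be substantially harder, and you would not see cleanly why $R_{abcd}$ (for $g$) and $R^0_{abcd}$ (for $g^{\pM}$) enter through different pieces.

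A smaller point: your reducible-case argument is not quite right as stated. The trace $\Tr[\Theta f'(\nabla^t X)]$ does \emph{not} vanish when $\phi_0=0$ (it is a nonzero multiple of $\mathbf r=\ell e^3$), and $R^0_{1212}$ need not vanish either. What actually happens is that after setting $\phi_0=R_{1234}=R_{2314}=0$ in \Ref{final-trns}, the only surviving term carries the factor $f'(i\phi_0)=f'(0)$, which is zero because $f$ is even. Your phrase ``$f'(i\phi_0)$ paired with a vanishing curvature coefficient'' has it backwards: it is $f'(0)=0$ itself that does the work.
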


The curvature values of the components $R_{abcd}$ which appear in formula \Ref{final-trns}
are taken from \Ref{curv-ind}. For the curvature components $R^0_{abcd}$ appearing
in this formula, which are just curvature components of the restriction of $g$ to $\pM$,
a standard metric on a circle bundle over a $2$-manifold, we use Lemma $3.2$ of \cite{wat},
along with one more fact about SKR metrics. Namely, the vertical component of the
Lie bracket of two horizontal vector fields $w$, $w'$ is $\mp 2\om^{(h)}(w,w')u$,
which implies that the curvature of the connection form $\theta$ of the circle bundle is
$\pm a\om^h$. This lemma also shows that $R^0_{1313}=R^0_{2323}$, a fact that will be used
later to derive \Ref{k-odA1}, which is part of the computation giving the term involving this
curvature component in \Ref{final-trns}.

Note that  in cases where $R^h_{1212}$ is constant (such as the conformally Einstein
case), the coefficient of $e^{123}$ in the integrand  of \Ref{final-trns} varies only with $t$,
so that after $t$-integration it is constant along the boundary. This is because it is
composed out of functions of $\tau$ evaluated at $\tau=0$.

We prove this proposition in Subsections \ref{prel}-\ref{end}, where, after some preliminaries,
we define and compute the main ingredients of the transgression form, and then use them to
calculate the terms in formula \Ref{transg}.

\subsection{The endomorphism-valued $1$-form $\Theta$}\lb{prel}

In the next three subsections we will give expressions for the ingredients of the equivariant
transgression formula \Ref{transg}, namely $\Theta$, $\n^t\!X$ and $R^t$, or rather their pull-back to $\pM$,
in the case where $(g,\t)$ is a fibered SKR structure on a four manifold with boundary.
We begin with $\Theta$.

\medskip

\subsubsection{The connection $1$-form matrix of $g$}

We work in the orthonormal frame given by $\{e_i\}=\{w/|w|,Jw/|w|,u/\sqrt{Q},-v\sqrt{Q}\}$,
defined in Proposition \ref{curv-mtx} on the regular set of $\t$, along with the dual coframe $\{e^i\}$.
In this frame, the endomorphism-valued one-form $\Theta:=\n^1-\n^0$ is given by the matrix of one forms
\[\Theta_{ij}=\nu_{ij}-\nu^0_{ij},\] where $\nu_{ij}$ is the connection 1-form matrix
of $g$, and $\nu^0_{ij}$ is the connection 1-form matrix of $g_0$. The latter metric is, near $\pM$,
a product of the restriction $g^\pM$ of $g$ to the boundary and a standard metric on an interval.
The $\nu_{ij}$ are obtained directly from relations \eqref{conn} via
\[
\nu_{ij}=\sum_k\nu_{ij}(e_k)e^k,\qquad \nu_{ij}(e_k)=g(\nab_{e_k}e_i,e_j).
\]
Calculating this gives, with
\be\lb{kl}
k=\phi/\sqrt{Q}, \quad \ell=\psi/\sqrt{Q}
\end{equation}
and $\mathbf{f}:=g(D_ww,Jw)e^1+g(D_{Jw}w,Jw)e^2+ke^3$,
\be\lb{nu}
[\nu_{ij}]=\begin{bmatrix}
      0          &   \mathbf{f} &   ke^2   &  ke^1  \\[4pt]
   -\mathbf{f}   &    0         &   -ke^1  &  ke^2  \\[4pt]
   -ke^2         &    ke^1      &   0      &  \ell e^3 \\[4pt]
   -ke^1         &   -ke^2      &  -\ell e^3   &    0
\end{bmatrix}.
\end{equation}

\medskip

\subsubsection{The pull-back of $\Theta$}Let $\imath:\pM\to M$ be the boundary embedding.
To obtain the entries of $\imath^*\Theta_{ij}$, one need not calculate the connection
one-form matrix of $g_0$. Instead, note first that
\[
\sum_j\Theta_{ij}(a)e_j=\Theta(a)e_i=\n_a e_i-\n^0_a e_i=\n_a e_i-\n^\pM_a e_i=\Pi(a,e_i)e_4,
\quad i < 4, \quad a\in T\pM,
\]
where $\n^\pM$ is the Levi-Civita connection of $g^\pM$, and $\Pi$ is the second fundamental form of the boundary.
It follows that for $i,j\le 3$, $\imath^*\Theta_{ij}=0$.

Second, let $\{e_i^0\}$ be a $g_0$-orthonormal
frame\footnote{Note the distinct meanings of $e^j_i$, $j\geq 1$ and $e^0_i$.}
coinciding with $\{e_i\}$ on $\pM$. The
entries $\nu^0_{ij}=g_0(\n^0_{e_k}e^0_i,e^0_j)e^k$ form an antisymmetric matrix.
Since the metric $g_0$ is product near the boundary, it follows that
$\imath^*\nu^0_{ij}$ must vanish if either $i$ or $j$ is $4$. We thus see from \Ref{nu} that
\be\lb{thet}[(\imath^*\Theta)_{ij}]=\begin{bmatrix}
      0   &   0     &   0   & \pP  \\[4pt]
      0   &   0     &   0   & \qQ  \\[4pt]
      0   &   0     &   0   & \rR \\[4pt]
   -\pP   &   -\qQ  &  -\rR &    0
\end{bmatrix},
\end{equation}
where
\be\lb{pqr}
\text{$\pP=ke^1$, $\qQ=ke^2$ and $\rR=\ell e^3$, all evaluated at
$\t=0$.}
\end{equation}
Equivalently, denoting $e^i_j:=e^i\otimes e_j$
\be\lb{ithet}
\imath^*\Theta=\sum_{i=1}^2ke^i\otimes(e^i_4-e^4_i)+\ell e^3\otimes(e^3_4-e^4_3).
\end{equation}

\subsection{The endomorphism $\nab^t\!X$}\lb{nabtX}

We now derive an expression for the $t$-dependent endomorphism-valued function $\n^t\!X$,
for the linear family $\n^t$ given by \Ref{E:nt}, associated to a fibered SKR metric $g$
with $X$ the vector field corresponding to an infinitesimal $U(1)$-generator.


\subsubsection{The endomorphism $\n^0\!X$}

On the boundary $\pM=\{\t=0\}$, the vector field $X=u$ has constant length with respect
to $g^\pM$, as $Q=g(X,X)$ is a function of $\t$ (see lines before \Ref{rels}).
As $g_0$ is product near the boundary, and the normal geodesic flow preserves $X$ (see Remark \ref{gp-exp}),
it follows that $X$ also has constant $g_0$-length near the
boundary. Namely, its length is $\sqrt{Q_0}$ where $Q_0=Q|_{\{\t=0\}}$. Therefore,
one can choose one of the vector fields in the $g_0$-orthonormal frame mentioned in the previous subsection
(and defined near the boundary) to be $e_3^0=X/\sqrt{Q_0}$. We thus compute
\begin{equation}\lb{nab0X}
\n^0\!X=\n^0(\sqrt{Q_0}e_3^0)=\sqrt{Q_0}\,\nu^0_{3j}\otimes e^0_j=\sqrt{Q_0}\,(\nu^0_{31}\otimes e^0_1+\nu^0_{32}\otimes e^0_2),
\end{equation}
where the last equality follows as $[\nu^0_{ij}]$ is antisymmetric, and we have seen in the previous
subsection that $\nu^0_{i4}=0$.

\subsubsection{Equality of connection $1$-form matrix entries}

Next, we claim that $\Theta_{3j}=0$ for $j=1,2$  along the boundary. First, if $S$ is the
shape operator for the boundary, given for vector fields $a$, $b$ tangent to the boundary by  $g^\pM(Sa,b)=\Pi(a,b)$, where $\Pi$ is the second fundamental form of the boundary, then $\Theta(a)e_4=-Sa$, because $\n^0_{a}e_4=0$ and $g(\n_a e_4,b)=-g(e_4,\n_a b)$.
Furthermore, it follows that $\Theta(e_4)=-S$, as one can see by extending $a\in T\pM$
away from the boundary using the flow of $e_4$, and also because $e_4$ is geodesic with
respect to both $g$ and $g_0$.
Hence for $i$, $j<4$
\[
\Theta_{ij}(e_4)=g^\pM(\sum_k\Theta_{ik}(e_4)e_k, e_j)=g^\pM(\Theta(e_4)e_i,e_j)=g^\pM(-Se_i,e_j)=-\Pi(e_i,e_j).
\]
But for $i=3$ and $j=1$ or $j=2$, the covariant derivative formulas \Ref{conn} imply that $\Pi(e_i,e_j)$,
which is determined by the component of $\nab_{e_i}e_j$ normal to the boundary, is zero.
Together with the previously shown relations $\imath^*\Theta_{ij}=0$,  $i$, $j<4$, we obtain
the claim.

It follows
\be\lb{nus}
\nu^0_{31}=\nu_{31},\qquad \nu^0_{32}=\nu_{32}\qquad \mathrm{on}\ \pM.
\end{equation}

\subsubsection{The endomorphism $\n X$}

Next, we compare \Ref{nab0X} to $\n X$, which we compute here independently, ignoring \Ref{nab-X}:
\begin{equation}\lb{nabX}
\begin{aligned}
\n X&=\n (\sqrt{Q}e_3)=d(\sqrt{Q})e_3+\sqrt{Q}\nu_{3j}\otimes e_j\\
&=(1/(2\sqrt{Q}))Q'(-\sqrt{Q})e^4_3
+\sqrt{Q}(\nu_{31}\otimes e_1+\nu_{32}\otimes e_2+\ell e^3_4)\\
&=\psi(e^3_4-e^4_3)+\sqrt{Q}(\nu_{31}\otimes e_1+\nu_{32}\otimes e_2),
\end{aligned}
\end{equation}
where in the last step we have used the second equation in \eqref{rels}.
Note, of course that the last term in \Ref{nabX} is just $\phi(e^1_2-e^2_1)$,
in accordance with \Ref{nab-X}.

\subsubsection{Conclusion}
By \Ref{nus}, {\em on the boundary},
the second term in the last line of \Ref{nabX} is identical to the right hand side of \Ref{nab0X}.
Putting \Ref{nab0X} and \Ref{nabX} together, we thus see that
$\n^t\!X=(1-t)\n^0\!X+t\n X$ is given on $\pM$ by
%
\be\lb{ntx}[(\nab^t\!X)_{ij}]=
\begin{bmatrix}
      0   &   \phi  &   0     & 0  \\[4pt]
  -\phi   &   0     &   0     & 0  \\[4pt]
      0   &   0     &   0     & t\psi \\[4pt]
      0   &   0     &  -t\psi &    0
\end{bmatrix},
\end{equation}
so that \be\lb{ntx1} \n^t X=\phi J_\cal{H}+t\psi J_\cal{V}. \end{equation}
Here and further on,
for notational ease, we write $\phi$, $\psi$ rather than their values $\phi_0$, $\psi_0$ at $\t=0$.

\subsection{The curvature of $\n^t$}

We now derive the expression for the curvature $R^t$ for the linear family $\n^t$ associated to a fibered SKR metric
$g$.

\subsubsection{The decomposition}

To analyze the curvature of $\n^t=\n^0+t\Theta$, we follow Moroianu \cite{mor}, who considers
the decomposition
\be\lb{curv-decom}
R^t=(\n^t)^2=R^0+td^{\n^0}\!\Theta+t^2\Theta^2.
\end{equation}
We are interested in the pull-back to the boundary of these terms,
which we denote
\be\lb{pull-curv}
A^1:=\imath^*R^0, \quad A^2:=\imath^*d^{\n^0}\!\Theta, \quad A^3:=\imath^*\Theta^2.
\end{equation}
Of these, $A^3$ is simplest to compute since we have $\imath^*\Theta$. Next, since
$g_0$ is a product metric near $\pM$,
\be\lb{A1}
A^1=R^{\del M}=\sum_{\substack{a,b,c,d=1 \\a<b,\, c<d}}^3e^{ab}\otimes[R^0_{abcd}(e^c_d-e^d_c)].
\end{equation}

\subsubsection{Formula for $A^2$}

For $A^2$ we note the following, which is similar to \cite{mor}. First, the integral curves of
$v$ are pre-geodesics of $g$. This is obvious from the Hessian clause in definition \ref{SKRdef},
and discussed in \cite{dm2}. Therefore, as was already alluded to in Remark \ref{gp-exp}, the
integral curves of the normalization $e_4$ of $v$ are geodesics of $g$ normal to the boundary,
and via this flow one can define a $U(1)$-equivariant collar in $M$.
These integral curves are also geodesics of the product metric $g_0$ on the collar,
and $e_4$ is parallel with respect to $g_0$. This last fact, together
with formula \Ref{ithet} for $\imath^*\Theta$ shows that $g(d^{\nab^0}\Theta(e_i,e_j)e_k,e_l)=0$ if $i,j,k,l<4$.
Such vanishing is also easily shown if $k=l=4$.
Next, for $i,j,k<4$,
\be\lb{codaz}
g(d^{\nab^0}\Theta(e_i,e_j)e_k,e_4)=R_{ijk4},
\end{equation}
by the Codazzi-Mainardi equation for $\pM$.
Therefore, we have
\be\lb{A2}
A^2=\sum_{\substack{a,b,c<4\\a<b}} e^{ab}\otimes[R_{abc4}(e^c_4-e^4_c)].
\end{equation}

\subsubsection{Further details}

We supply some details on Moroianu's observation \Ref{codaz}, to make clear how a relation comes about
between the Levi-Civita connection of $g_0$ and the curvature of $g$.
We already mentioned the following relations between the second fundamental form $\Pi$, the shape
operator $S$ of $\pM$ and $\Theta$, namely
\[
\Theta(a)b=\Pi(a,b)e_4, \quad \Theta(a)e_4=-Sa,\quad \Theta(e_4)=-S, \quad a,b\in T\pM.
\]
A further somewhat tedious but straightforward verification then shows that for $\{i,j,k\}\subset\{1,2,3\}$, $g((d^{\n^0}\!\Theta)(e_i,e_j)e_k,e_4)=g((d^{\n^0}\!S(e_i,e_j),e_k)$.

To show the relation to the curvature of $g$, we compute
\begin{equation*}
\begin{aligned}
(d^{\n^0}&\!S)(a,b)
=\nabla^0_a(S(b))-\nabla^0_b(S(a))-S([a,b])\\
&=\nabla_a(S(b))-\Pi(a,S(b))e_4-[\nabla_b(S(a))-\Pi(b,S(a))e_4]-S([a,b])\\
&=\nabla_a(S(b))-S(\nabla_ab)-[\nabla_b(S(a))-S(\nabla_ba)]
-g^\pM(S(a),S(b))e_4+g^\pM(S(b),S(a))e_4\\
&=(\nabla_aS)(b)-(\nabla_bS)(a).
\end{aligned}
\end{equation*}
This, together with $g^\pM((\nabla_aS)b,c)=g^\pM((\nabla_a\tilde{\Pi})(b,c),e_4)$
for $\tilde{\Pi}(a,b)=\Pi(a,b)e_4$ (see for instance the proof of
Cor. 34 in chapter 4 of \cite{on}) and the Codazzi-Minardi equation
\[
g^\pM((\nabla_a\tilde{\Pi})(b,c),e_4)-g^\pM((\nabla_b\tilde{\Pi})(a,c),e_4)=g^\pM(R(a,b)c,e_4)
\]
yields \Ref{codaz}.

\subsection{The transgression form: the most complex term}

In the next two subsections we will compute the pull-backs to the boundary of the four trace terms appearing
in formula \eqref{transg}, and then put them together to calculate $TL_\grg$ on a fibered SKR $4$-manifold
using Proposition \ref{4transg}. We compute here in detail the most complex of these, and in the next subsection
simply give the formulas for the other three, as they involve similar but easier calculations. In order to
simplify the notations, we will write $\phi$, $\psi$, $Q$ in place of $\phi_0$, $\psi_0$, $Q_0$ throughout
all calculations.

\subsubsection{Considerations unrelated to curvature}

The most complex of the trace terms in \Ref{transg} is the last one, involving
the non-commutative analogue of the second derivative of $f$. Given that $f$ is even, this term is
\be\lb{mixed-R-T}
\mathrm{Tr}\left[(f^{[2]}(\nab^t X)*\Theta)R^t\right]=\sum_{m=0}^\infty\sum_{k=0}^{2m}\fr {f^{(2m+2)}(0)}{(2m+1)!}\mathrm{Tr}\left[(\nab^t\!X)^{2m-k}\Theta (\nab^t\!X)^{k}R^t\right].
\end{equation}
Note that the two powers in which $\nab^t\!X$ appears in each summand are either
both even or both odd. Letting $I_\cal{H}$, $I_\cal{V}$ denote the projection operators onto $\cal{H}$,
$\cal{V}$, respectively, and $J_\cal{H}$, $J_\cal{V}$ the composition of $J$ with these operators,
\eqref{ntx1} gives
\be\lb{powers}
(\nab^t\!X)^{k}=
\begin{cases}
(-1)^{k/2}\phi^{k}I_\cal{H}+(-1)^{k/2}(t\psi)^{k}I_\cal{V},\quad \text{$k$ even,}\\
(-1)^{(k-1)/2}\phi^{k}J_\cal{H}+(-1)^{(k-1)/2}(t\psi)^{k}J_\cal{V},\quad \text{$k$ odd.}
\end{cases}
\end{equation}
Thus, using \eqref{thet}, for $k$ even
\begin{multline}\lb{all-but-Rt}
[((\nab^t\!X)^{2m-k}\imath^*\Theta (\nab^t\!X)^{k})_{ij}]=\\
\begin{bmatrix}
0 & 0 & 0 &(-1)^m\phi^{2m-k}(t\psi)^k\mathbf{p}\\
0 & 0 & 0 &(-1)^m\phi^{2m-k}(t\psi)^k\mathbf{q}\\
0 & 0 & 0 &(-1)^m(t\psi)^{2m}\mathbf{r}\\
(-1)^{m+1}(t\psi)^{2m-k}\phi^k\mathbf{p} & (-1)^{m+1}(t\psi)^{2m-k}\phi^k\mathbf{q}
& (-1)^{m+1}(t\psi)^{2m}\mathbf{r} & 0
\end{bmatrix},
\end{multline}
whereas for $k$ odd
\begin{multline}\lb{all-but-Rt-2}
[((\nab^t\!X)^{2m-k}\imath^*\Theta (\nab^t\!X)^{k})_{ij}]=\\
\begin{bmatrix}
0 & 0 & (-1)^m\phi^{2m-k}(t\psi)^k\mathbf{q} & 0\\
0 & 0 & (-1)^{m-1}\phi^{2m-k}(t\psi)^k\mathbf{p} & 0\\
(-1)^{m-1}(t\psi)^{2m-k}\phi^k\mathbf{q} & (-1)^{m}(t\psi)^{2m-k}\phi^k\mathbf{p}
& 0 & (-1)^{m}(t\psi)^{2m}\mathbf{r}\\
0 & 0 &(-1)^{m-1}(t\psi)^{2m}\mathbf{r} & 0\\
\end{bmatrix}.
\end{multline}
To complete the calculation of the trace part in each summand of \eqref{mixed-R-T}, we need to multiply
these matrices by the one corresponding to $\imath^*R^t$ and take the trace. This we do by splitting into cases,
according to the decomposition \Ref{curv-decom}, that is, the terms $A^i$ of \Ref{pull-curv}. Together with the
parity of $k$, this gives six cases to consider. We analyze each of them below.

\subsubsection{$A^1$ and $A^3$ for $k$ even}

Following Moroianu \cite{mor} we set
\[
\imath^*\Theta=\Theta'-\Theta''
\]
where $\Theta'=\sum_{i=1}^2k e^i\otimes e^i_4+\ell e^3\otimes e^3_4$ and
$\Theta''$ is the transpose of $\Theta'$. Then, as $R^t$
and $\nab^t\!X$ are skew-symmetric, the pull-back under $\imath$
of each trace term in \eqref{mixed-R-T} can be written as
\be\lb{th-prime}
-\mathrm{Tr}\left[(\nab^t\!X)^k\Theta'' (\nab^t\!X)^{2m-k}\imath^*R^t\right]
-\mathrm{Tr}\left[(\nab^t\!X)^{2m-k}\Theta'' (\nab^t\!X)^{k}\imath^*R^t\right],
\end{equation}
Now the endomorphism components of
$A^1:=\imath^*R^0=R^\pM$ and $A^3:=\imath^*\Theta^2$ map $T_pM$ to $T_p\pM$.
For the latter this can be seen as
\be\lb{thet2}
[(\imath^*\Theta^2)_{ij}]=
\begin{bmatrix}
0                        &-\mathbf{p}\we\mathbf{q} & -\mathbf{p}\we\mathbf{r} & 0\\
-\mathbf{q}\we\mathbf{p} &            0            & -\mathbf{q}\we\mathbf{r} & 0\\
-\mathbf{r}\we\mathbf{p} &-\mathbf{r}\we\mathbf{q} &               0          & 0\\
0                        &            0            &               0          & 0
\end{bmatrix}.
\end{equation}
On the other hand, the endomorphism component of $\Theta''$ vanishes when restricted to $T\pM$.
When $k$ is even, the matrices of powers of $\n^t X$ are diagonal and so map $T\pM$ to itself.
Combining the last three statements we conclude that when $k$ is even, \eqref{th-prime} vanishes
if its curvature term is replaced by either $A^1$ or $A^3$. Thus
\be\lb{k-evA13}
\text{for $k$ even,\qquad
$\mathrm{Tr}\left[(\nab^t\!X)^{2m-k}\imath^*\Theta (\nab^t\!X)^{k}A^i\right]=0$\qquad if\ \ $i=1,3$.}
\end{equation}
This can also be seen directly by calculating with \eqref{all-but-Rt} to get an off-diagonal matrix.

\subsubsection{$A^3$ for $k$ odd}

A direct matrix product calculation using \eqref{all-but-Rt-2} and \Ref{thet2} gives
\be\lb{k-odA3}
\text{for $k$ odd,\qquad
$\mathrm{Tr}\left[(\nab^t\!X)^{2m-k}\imath^*\Theta (\nab^t\!X)^{k}A^3\right]=
(-1)^{m-1}2(\phi^{2m-k}(t\psi)^k+(t\psi)^{2m-k}\phi^k)
\mathbf{p}\we\mathbf{q}\we\mathbf{r}$.}
\end{equation}

\subsubsection{$A^1$ for $k$ odd}

Next we consider the case where $k$ is odd and the trace is taken with $A^1$, which is given by
\Ref{A1}:
\be\lb{A1-sec}
A^1=R^{\del M}=\sum_{\substack{a,b,c,d=1 \\a<b,\, c<d}}^3e^{ab}\otimes[R^0_{abcd}(e^c_d-e^d_c)].
\end{equation}
For each fixed pair $\{a,b\}$ we consider the matrix corresponding to the term in square brackets in the
summand of \Ref{A1-sec}, with $\{c,d\}$ equal to one of the ordered pairs $\{1,2\}$, $\{1,3\}$, $\{2,3\}$.
We take the product of the matrix \Ref{all-but-Rt-2} with this matrix. The trace of the resulting matrix
is a contribution to the trace appearing in a summand of \Ref{mixed-R-T}.
The computation gives zero trace for the pair $\{1,2\}$, while
the traces for the other two pairs are
\[
(-1)^{m-1}(\phi^{2m-k}(t\psi)^k+(t\psi)^{2m-k}\phi^k)R^0_{ab13}\mathbf{q},\qquad
(-1)^{m}(\phi^{2m-k}(t\psi)^k+(t\psi)^{2m-k}\phi^k)R^0_{ab23}\mathbf{p}.
\]
Recall now that the matrices for all the endomorphisms in these calculations
consist of entries which are differential forms, which are wedged upon matrix multiplication.
Hence, given that $\mathbf{q}$ ($\mathbf{p}$) is a multiple of $e^2$ ($e^1$),
a nonzero contribution will only come when the two curvature coefficients
are, respectively, $R^0_{1313}$ and $R^0_{2323}$. But as mentioned earlier these two
are equal (see \cite{wat}).
Thus, since $\mathbf{q}\we e^{13}=-\mathbf{p}\we e^{23}$,
the above two terms, each wedged with the appropriate $e^{ab}$,
are in fact equal.
We thus finally arrive at
\be\lb{k-odA1}
\text{for $k$ odd,\quad
$\mathrm{Tr}\left[(\nab^t\!X)^{2m-k}\imath^*\Theta (\nab^t\!X)^{k}A^1\right]=
(-1)^{m}2(\phi^{2m-k}(t\psi)^k+(t\psi)^{2m-k}\phi^k)R^0_{2323}
\mathbf{p}\we e^{23}$.}
\end{equation}

\subsubsection{$A^2$ for any $k$}

It remains to consider $A^2$, which is given by \Ref{A2}:
\be\lb{A2-sec}
A^2=\sum_{\substack{a,b,c<4\\a<b}} e^{ab}\otimes[R_{abc4}(e^c_4-e^4_c)].
\end{equation}
We proceed as in the previous case, fixing indices $\{a,b\}$ and calculating the
trace of the composition of \eqref{all-but-Rt} or \eqref{all-but-Rt-2}
with the matrix corresponding to the term in the square bracket of \Ref{A2-sec}, separately for index $c$
equal to $1$, $2$ and $3$. Wedging the result with the appropriate $e^{ab}$
(only one of which gives a nonzero answer) and summing the three contributions yields
\begin{equation}\lb{k-evA2}
\begin{aligned}
\text{for $k$ even,}\quad
\mathrm{Tr}&\left[(\nab^t\!X)^{2m-k}\imath^*\Theta (\nab^t\!X)^{k}A^2\right]\\
&=(-1)^{m+1}(\phi^{2m-k}(t\psi)^k+(t\psi)^{2m-k}\phi^k)R_{2314}
\mathbf{p}\we e^{23}\\
&+(-1)^{m+1}(\phi^{2m-k}(t\psi)^k
+(t\psi)^{2m-k}\phi^k)R_{1324}
\mathbf{q}\we e^{13}\\
&+(-1)^{m+1}2(t\psi)^{2m}R_{1234}
\mathbf{r}\we e^{12}\\
&=(-1)^{m+1}2(\phi^{2m-k}(t\psi)^k+(t\psi)^{2m-k}\phi^k)R_{2314}
\mathbf{p}\we e^{23}\\
&+(-1)^{m+1}2(t\psi)^{2m}R_{1234}
\mathbf{r}\we e^{12},
\end{aligned}
\end{equation}
as the first two summands are equal by \eqref{curv-ind}. When $k$ is odd,
the traces corresponding to index $c=1$ and $c=2$ vanish, and one
arrives at
\be\lb{k-odA2}
\text{for $k$ odd,\quad
$\mathrm{Tr}\left[(\nab^t\!X)^{2m-k}\imath^*\Theta (\nab^t\!X)^{k}A^2\right]
=(-1)^{m-1}2(t\psi)^{2m}R_{1234}
\mathbf{r}\we e^{12}$.}
\end{equation}
Notice that this is the same expression as the last term in \Ref{k-evA2}.

\subsubsection{Conclusion}

Putting \eqref{k-evA13}, \Ref{k-odA3}, \Ref{k-odA1}, \Ref{k-evA2}, \Ref{k-odA2} together,
while recalling our notation $M_{k,m}(a,b)=a^kb^{2m-k}+b^ka^{2m-k}$, we have
\be\lb{biggy}
\begin{aligned}
  \sum_{k=0}^{2m}\imath^*\mathrm{Tr}&\left[(\nab^t\!X)^{2m-k}\Theta (\nab^t\!X)^{k}R^t\right]\\
&=\sum_{k=0}^{2m}\mathrm{Tr}\left[(\nab^t\!X)^{2m-k}\imath^*\Theta (\nab^t\!X)^{k}(A^1+tA^2+t^2A^3)\right]\\
&=\sum_{k\ \mathrm{odd}=1}^{2m-1}\left[M_{k,m}(\phi,t\psi)\left((-1)^{m}2R^0_{2323}
\mathbf{p}\we e^{23}\right.\right.\left.\left.+(-1)^{m-1}2t^2
\mathbf{p}\we\mathbf{q}\we\mathbf{r}\right)\right]\\
&+\sum_{k\ \mathrm{even}=0}^{2m}\left[M_{k,m}(\phi,t\psi)(-1)^{m+1}2tR_{2314}
\mathbf{p}\we e^{23}\right]\\
&+t(2m+1)(-1)^{m-1}2(t\psi)^{2m}R_{1234}
\mathbf{r}\we e^{12},
\end{aligned}
\end{equation}
Then $\imath^*\mathrm{Tr}\left[(f^{[2]}(\nab^t X)*\Theta)R^t\right]$ is obtained
by multiplying this expression by $f^{(2m+2)}(0)/(2m+1)!$ and summing
over $m=0\ldots\infty$.

\subsection{The transgression form: simpler terms}\lb{trans-simp}
It remains to calculate the $\imath$-pullbacks of $\mathrm{Tr}\left[f'(\nab^t\!X) R^t\right]$,
$\mathrm{Tr}\left[\Theta f'(\nab^t\!X)\right]$ and $\exp(\mathrm{Tr}\left[f(\nab^t\!X)\right])$,
which are the remaining terms in formula \Ref{transg} for the equivariant transgression.

For the first of these we proceed as before except that we use (the odd case of)
\eqref{powers} in the matrix multiplications rather than \eqref{all-but-Rt}, \eqref{all-but-Rt-2}.
Decomposing $\imath^*R^t$ as in the previous subsection, we add the traces of the matrix product
with each of the $A^i$, to arrive at
\begin{multline*}
\imath^*\mathrm{Tr}\left[f'(\nab^t\!X)R^t\right]
=\sum_{m=0}^\infty\fr {f^{(2m+2)}(0)}{(2m+1)!}\imath^*\mathrm{Tr}\left[(\nab^t\!X)^{2m+1}R^t\right]\\
=\sum_{m=0}^\infty\fr {f^{(2m+2)}(0)}{(2m+1)!}\left[t^22(-1)^{m}\phi^{2m+1}\mathbf{p}\we\mathbf{q}
+\sum_{\substack{a,b=1\\ a<b}}^3e^{ab}\, 2(-1)^{m+1}\phi^{2m+1}R_{ab12}^0\right.\\
\left.+t\sum_{\substack{a,b<4\\ a<b}}e^{ab}\, 2(-1)^{m+1}(t\psi)^{2m+1}R_{ab34}\right].
\end{multline*}

The remaining two terms are easier, as they do not involve curvature. We have,
\begin{multline*}
\imath^*\mathrm{Tr}\left[\imath^*\Theta f'(\nab^t\!X)\right]
=\sum_{m=0}^\infty\fr {f^{(2m+2)}(0)}{(2m+1)!}\mathrm{Tr}\left[\Theta (\nab^t\!X)^{2m+1}\right]
=\sum_{m=0}^\infty\fr {f^{(2m+2)}(0)}{(2m+1)!}2(-1)^{m+1}(t\psi)^{2m+1}\mathbf{r}
\end{multline*}
and, as $f$ is even,
\begin{multline*}
\mathrm{Tr}\left[f(\nab^t\!X)\right]
=\sum_{m=0}^\infty\fr {f^{(2m)}(0)}{(2m)!}\mathrm{Tr}\left[(\nab^t\!X)^{2m}\right]\\
=\sum_{m=0}^\infty\fr {f^{(2m)}(0)}{(2m)!}(-1)^m2(\phi^{2m}+(t\psi)^{2m})
=2\left(f(i\phi)+f(it\psi)\right).
\end{multline*}

\subsection{Assembling the constituents of the equivariant transgression formula in the irreducible case}\lb{fin}
We now gather all the work of the last two subsections and obtain formula \Ref{final-trns}
for the pull-back to the boundary of the degree $3$ component of the
equivariant transgression of an irreducible fibered SKR $4$-manifold.

We recall here the integrand in formula \eqref{transg}
for the transgression of the equivariant $L$-form
in dimension four:
\[
\exp\left(\mathrm{Tr}\left[f(\nab^t\!X)\right]\right)\!
\left(\mathrm{Tr}\left[\Theta f'(\nab^t\!X)\right]\,\mathrm{Tr}\left[f'(\nab^t\!X)R^t\right]+
\mathrm{Tr}\left[\left(f^{[2]}(\nab^t\!X)*\Theta\right) R^t\right]\right)
\]
Set $a_m=f^{(2m+2)}(0)/(2m+1)!$. From \Ref{biggy} and the four formulas of Subsection \ref{trans-simp}
we arrive at the following expression, giving the pull-back to the boundary of the degree $3$ component of
the transgression of the equivariant $L$-form of an irreducible SKR metric on a fibered SKR $4$-manifold
with boundary $\{\tau=0\}$:
\begin{equation*}
\begin{aligned}
\imath^*TL_\grg(g,g_0)(X)_{[3]}&=\bigintsss_0^1\bigg[\exp\left(2\left(f(i\phi)
+f(it\psi)\right)\right)\cdot\\
&\cdot\bigg[\mathlarger{\mathlarger{\sum}}_{m,n=0}^\infty\!\! a_ma_n\bigg(
2(-1)^{m+1}(t\psi)^{2m+1}\cdot
\Big(t^22(-1)^{n}\phi^{2n+1}\mathbf{r}\we\mathbf{p}\we\mathbf{q}\\
&+2(-1)^{n+1}\phi^{2n+1}R_{1212}^0\,\mathbf{r}\we e^{12}
+t2(-1)^{n+1}(t\psi)^{2n+1}R_{1234}\,\mathbf{r}\we e^{12}\Big)\bigg)\\
&+\mathlarger{\mathlarger{\sum}}_{m=0}^\infty a_m\bigg(\!
\mathlarger{\mathlarger{\sum}}_{\mathrm{odd}\ k=1}^{2m-1}\!\!
\Big(M_{k,m}(\phi,t\psi)\left((-1)^{m}2R^0_{2323}
\mathbf{p}\we e^{23}\right.\left.+(-1)^{m-1}2t^2
\mathbf{p}\we\mathbf{q}\we\mathbf{r}\right)\Big)\\
&+\mathlarger{\mathlarger{\sum}}_{\mathrm{even}\ k=0}^{2m}
\!\!\Big(M_{k,m}(\phi,t\psi)(-1)^{m+1}2tR_{2314}
\mathbf{p}\we e^{23}\Big)\\
&+t(2m+1)(-1)^{m-1}2(t\psi)^{2m}R_{1234}
\mathbf{r}\we e^{12}
\bigg)\bigg]\bigg]\,dt.
\end{aligned}
\end{equation*}

To obtain formula \Ref{final-trns} from this, we recall
that as $f$ is even, we have $f'(ix)=i\mathrm{Im}f'(ix)$.
Using the power series for $f'(ix)$ and $f''(ix)$,
along with the substitutions
\Ref{pqr} and \Ref{kl} for $\pP$, $\qQ$ and $\rR$
and $\phi_0$, $\psi_0$, $Q_0$ for $\phi$, $\psi$, $Q$, respectively,
we verify \Ref{final-trns} after some simple algebra.

\subsection{Vanishing of the equivariant transgression in the reducible case}\label{SS:vanishing}\lb{end}

We show here the vanishing of the  pull-back to the boundary of the degree $3$ component of the
equivariant transgression of a reducible fibered SKR $4$-manifold.

Recall that for a reducible SKR metric, the metric expression
differs from the irreducible case \eqref{SKR} only with regard to the factor
multiplying the base metric pull-back $\pi^*h$: it is $1$ rather
than $2|\t-\bar{c}|$. Equations \Ref{conn} for the Levi-Civita connection are still
valid. The only resulting difference in the
curvature components of \Ref{curv-ind} is in $R_{1212}$,
which does not appear in \Ref{final-trns}.
Thus, for reducible metrics, one can formally rederive formula \Ref{final-trns}
by exactly the same procedure with no resulting changes (although
the explicit formulas for the curvature $R^0$ of $g^0$ in Proposition \ref{TLg}
will be different). However, in actuality the calculations and the result
are much simpler, as on the regular set of $\t$,
$\phi=0$ identically in the reducible case, so that $R_{1234}$, $R_{2314}$
also vanish. Formula \Ref{final-trns} in the reducible case thus takes the form
\be
\begin{aligned}
\imath^*TL_\grg(g,g_0)(X)_{[3]}&=-\bigintsss_0^1
\bigg[\exp\left(2\left(f(0)
+f(it\psi_0)\right)\right)\cdot
\bigg(4\fr{\psi_0}{\sqrt{Q_0}}f'(it\psi_0)R^0_{1212}f'(0)
\bigg)e^{123}\bigg]\,dt.
\end{aligned}
\end{equation}
However, our $f$ is also even, so that $f'(0)=0$
and thus in the reudcible case, i.e when the SKR metric $g$
is a local product of K\"ahler factors, with $\cal{H}$, $\cal{V}$
of \Ref{HVvu} tangent to these factors,
\[
\imath^*TL_\grg(g,g_0)(X)_{[3]}=0.
\]
Note that such a metric is generally not a ``product metric" in the
sense used for $g_0$.

This concludes the proof of Proposition \ref{TLg}.
Note that the vanishing of this pulled-back component of the equivariant transgression
form in the reducible SKR case, and equation \Ref{final-trns} in the irreducible SKR
case, are valid not just for the Hirzebruch
$L$-form, but for any germ of an analytic even function $f$.

Given this vanishing result, and the one for the degree four component of
$L_\grg(g)(X)$  mentioned at the end of Subsection \ref{eq-L}, we see
from \Ref{E:signuntwisted} that in the reducible fibered
SKR case, the infinitesimal equivariant eta invariant is minus the signature.
However, for reducible fibered SKR metrics the manifold $M$ is a flat disc bundle over
a Riemann surface, so that the signature, and therefore this eta invariant, are also zero.
Summarizing
\begin{Proposition}\lb{van-eta}
For a reducible SKR metric on the total space of a disc bundle $M$ over a compact oriented surface,
let $D^+_\ppM$ be the restriction of the odd signature operator to the even forms. Then
\[
\eta_X(D^+_\ppM)=0.
\]
\end{Proposition}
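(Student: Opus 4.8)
The plan is to read the result off Corollary~\ref{C:signuntwisted} once the two integrals on its right-hand side, together with the signature term, are all seen to vanish. Here $G=U(1)$ acts by isometries with generator $X=u=J\nabla\t$, and we take for $g^M_0$ the metric furnished by Proposition~\ref{TLg} (a $G$-invariant product near $\pM$ agreeing with $g$ on $\pM$) together with the linear family of connections~\eqref{E:nt}, so the hypotheses of Theorem~\ref{T:sign=} and hence of Corollary~\ref{C:signuntwisted} are met. Moreover $X_M$ has no zero on $\pM=\tau^{-1}(0)$, since $0$ is a regular value of $\t$ so that $\nabla\t\neq0$ there. For $\calV=\CC$ and $n=\dim M=4$ formula~\eqref{E:signuntwisted} reads
\[
\eta_X(D_\pM^+)\ =\ (\pi i)^{-2}\Big[\int_M L_\grg(g^M)(X)\ -\ \int_{\pM} TL_\grg(g^M,g^M_0)(X)\Big]\ -\ \sign(M).
\]

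First I would dispose of the bulk term. Integration of an equivariant differential form over the $4$-manifold $M$ extracts only its component of geometric degree $4$, so $\int_M L_\grg(g^M)(X)=\int_M L_\grg(g^M)(X)_{[4]}$. By Corollary~\ref{L-expl} this component equals $\big(\bar{f}'(\al)\delta+\bar{f}''(\al)\beta\gamma\big)\,e^{1234}$, and for a reducible fibered SKR metric the curvature quantities $\phi$, $c$, $r$ vanish identically on the regular set of $\t$, whence $\beta=\delta=0$ in~\eqref{abgd}; this is exactly the observation recorded at the end of Subsection~\ref{eq-L}. Therefore $L_\grg(g^M)(X)_{[4]}=0$ and the bulk integral vanishes.

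Next, the boundary term. Integration over the $3$-manifold $\pM$ extracts only the geometric-degree-$3$ part of the pull-back $\imath^*TL_\grg(g^M,g^M_0)(X)$, and the reducible case of Proposition~\ref{TLg} asserts precisely that $\imath^*TL_\grg(g,g_0)(X)_{[3]}=0$, the single surviving term being proportional to $f'(0)$, which vanishes because $f$ is even. Hence $\int_{\pM} TL_\grg(g^M,g^M_0)(X)=0$ as well. Finally, for a reducible fibered SKR metric the total space $M$ is a flat disc bundle over a compact oriented surface; a flat line bundle has vanishing Euler number, so the map $H^2(M,\pM)\to H^2(M)$ is zero, the subspace $\hat{H}^2(M)$ and the intersection form $Q$ are trivial, and $\sign(M)=0$. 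Substituting the three vanishings into the displayed formula gives $\eta_X(D_\pM^+)=0$; since this holds for all small $X$ with $X_M|_{\pM}$ nonvanishing, an open and dense condition, the formal power series $\eta_X(D_\pM^+)$ is identically zero.

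The argument is a bookkeeping assembly of results already in hand, so there is no serious obstacle; the one point that must be handled with care is the geometric-degree accounting, namely that $\int_M$ and $\int_{\pM}$ detect only the top-degree component of $L_\grg$ and the pulled-back codimension-one-degree component of $TL_\grg$, respectively, which is exactly why the earlier degree-$4$ and degree-$3$ vanishing statements are precisely what is needed.
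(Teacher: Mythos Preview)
Your proposal is correct and follows essentially the same approach as the paper: you invoke Corollary~\ref{C:signuntwisted} and show that each of the three terms on its right-hand side vanishes, using the reducible-case vanishings of $L_\grg(g)(X)_{[4]}$ (end of Subsection~\ref{eq-L}) and of $\imath^*TL_\grg(g,g_0)(X)_{[3]}$ (Proposition~\ref{TLg}), together with $\sign(M)=0$ for a flat disc bundle. Your cohomological justification for the signature vanishing (Euler number zero forces the map $H^2(M,\pM)\to H^2(M)$ to be zero) is a welcome extra detail that the paper leaves implicit.
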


It is not clear to us whether this result can be obtained via other means.
In the non-equivariant case, for example, one method for proving the vanishing of the eta
invariant is to show that the spectrum of $D^+_\ppM$ is symmetric about the origin. Often, this involves finding an orientation-reversing isometry, and deducing from its existence this spectral symmetry. If the SKR manifold $M$ is a global K\"ahler product, such an isometry can indeed be constructed. However, reducible SKR metrics also arise as local K\"ahler products which are not global, induced on flat disk bundles. As these restrict to local, but non-global products on the boundary circle bundle, it is not obvious to us how to obtain such an isometry.

The second difficulty in finding an alternative proof of our vanishing result is that symmetry of the spectrum is not sufficient for the vanishing of the equivariant eta-invariant. At least for Donnelley's equivariant eta invariant, one needs the restriction of $D^+_\ppM$ to each isotypical  component of the space of sections (i.e. a subspace of sections on which the circle acts with a given weight) to have a symmetric spectrum. To prove this one would need to construct an isometry which commutes with the circle action. It is not clear to us how to construct such an isometry even in the global product case.

\appendix

\section{Deformation of the Dirac bundle compatible with a family of metrics}\label{S:pr of deformation}

In this appendix we present a proof of Proposition~\ref{P:deformation Dirac}.

\subsection{Restriction of the Clifford action to $\pM\times\{u\}$}\label{SS:octu}
Recall that we denote by $E$ the restriction of $\E$ to $\pM$ and identify the restriction of $\E$ to $U=\pM\times(-\infty,0]\subset M$ with the product $E\times(-\infty,0]$. As in \eqref{E:octu}, for $u\in (-\infty,0]$, $y\in \pM$, $\xi\in T^*_{(y,u)}M$, we define a family of maps $\oc_u(\xi):E_y\to E_y$ by
\begin{equation}\label{E:ocu}
	c(\xi)\cdot(e,u)\ = \ \big(\, \oc_u(\xi)\cdot e,u\,\big),
	\qquad e\in E_y.
\end{equation}
Then $\oc_u(\xi)^2= -g^M(\xi,\xi)$ for all $y\in \pM,\ \xi\in T^*_{(y,u)}M$.

Our  first goal is to construct a family $\oc_{t,u}$ ($t\in \RR, \ u\in (-\infty,0]$) such that  $\oc_{t,u}(\xi)^2= -g^M_t(\xi,\xi)$ for all $y\in \pM,\ \xi\in T^*_{(y,u)}M$ and such that the action $c_t$ defined by  \eqref{E:octu} satisfies the conditions of Proposition~\ref{P:deformation Dirac}.

\subsection{Parallel transport on $T^*M$ associated to the metric $\gM_t$}\label{SS:parallel transport}
We denote by $\n^{LC}_t$ the Levi-Civita connection on $T^*M$ associated to the metric $\gM_t$. For $y\in\pM$ and $u_1,\ u_2\in (-\infty,0]$, let
\[
	\Phi_{t;y}^{u_1,u_2}:\, T^*_{(y,u_1)}M\ \to \ T^*_{(y,u_2)}M
\]
denote the parallel transport along the segment $\{y\}\times[u_1,u_2]$ with respect to this connection. This is an isometry, i.e
\begin{equation}\label{E:Phinorm}
	\big\|\,\Phi_{t;y}^{u_1,u_2}(\xi)\,\big\|_t \ = \ \|\xi\|_t,
	\qquad \xi\in T^*_{(y,u_1)}M,
\end{equation}
where
\[
	\|\xi\|_t\ := \ \sqrt{g^M_t(\xi,\xi)}.
\]

\subsection{The family of Clifford actions on $\pM\times(-\infty,0]$}\label{SS:consruction ct}
Let $r:\RR\to \RR$ be a smooth non-decreasing function such that
\[
	r(u)\ = \ \begin{cases}
		0, \quad &\text{for all} \ \ u\,\le\, -2;\\
		1, \quad &\text{for all} \ \ u\,\ge\, -1.
	\end{cases}
\]
Set
\[
	\phi(t,u)\ := \ t^{r(u)}, \qquad 0\le{}t\le1, \ \ u\le0.
\]
Then
\begin{equation}\label{E:phitu}
	\phi(t,u)\ = \
	\begin{cases}
	   t, \quad &\text{if}\ \ u\ge -1;\\
	   1, \quad &\text{if}\ \ u\le -2.
	\end{cases}	
\end{equation}

For $u\le0$, $y\in \pM$, and $\xi\in T^*_{(y,u)}M$, set
 \begin{equation}\label{E:construction ct}
	\oc_{t;u}(\xi)\ := \ \oc_0\big(\Phi^{u,0}_{\phi(t,u);y}(\xi)\big),
\end{equation}
and define the action $c_t:T^*\Big(\pM\times(-\infty,0]\Big)\to \End(\E)$ by \eqref{E:octu}.

\begin{lemma}\label{L:octu2}
For all $0\le{}t\le1$, $u\le0$, $y\in \pM$, and $\xi\in T^*_{(y,u)}M$ we have
\begin{equation}\label{E:octu2}
	\oc_{t,u}(\xi)^2\ = \ -\,\|\xi\|^2_t.
\end{equation}
\end{lemma}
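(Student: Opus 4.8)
The plan is to unwind the definition \eqref{E:construction ct} and use only two facts: that $\oc_0$ is a Clifford action for the metric $g^M_0$, and that the parallel transport $\Phi^{u,0}_{\phi(t,u);y}$ is an isometry from $(T^*_{(y,u)}M,g^M_{\phi(t,u)})$ to $(T^*_{(y,0)}M,g^M_{\phi(t,u)})$. First I would fix $t$, $u$, $y$ and $\xi\in T^*_{(y,u)}M$, and simply compute
\[
	\oc_{t,u}(\xi)^2 \ = \ \oc_0\big(\Phi^{u,0}_{\phi(t,u);y}(\xi)\big)^2
	\ = \ -\,g^M_0\big(\Phi^{u,0}_{\phi(t,u);y}(\xi),\Phi^{u,0}_{\phi(t,u);y}(\xi)\big),
\]
where the last equality is the defining Clifford relation for $\oc_0$ (recall that on the collar, with the product structure coming from $\n^\E$, the map $\oc_0$ is a fixed bundle map on $E$ satisfying $\oc_0(\zeta)^2 = -g^M_0(\zeta,\zeta)$ for $\zeta\in T^*_{(y,0)}M$, and by the conventions of Subsection~\ref{SS:product on E} the metric $g^M_0$ restricted to $\pM\times\{0\}$ agrees with $g^M$ on $\pM$).

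The remaining point is to identify $g^M_0\big(\Phi^{u,0}_{\phi(t,u);y}(\xi),\Phi^{u,0}_{\phi(t,u);y}(\xi)\big)$ with $\|\xi\|^2_t$. Here I would use the key property of the deformation $g^M_t$: by property (iv) of Subsection~\ref{SS:properties gMa}, for each value $s\in[0,1]$ the metric $g^M_s$ restricted to every slice $\pM\times\{u'\}$ with $u'\ge -\tfrac23+s$ equals $g^\pM$, so in particular the norms $\|\cdot\|_{g^M_s}$ on such slices do not depend on $s$ in the relevant range; combined with the product structure near the boundary this means that along the segment $\{y\}\times[u,0]$ the connection $\n^{LC}_{\phi(t,u)}$ parallel transport preserves the $g^M_{\phi(t,u)}$-norm, i.e. \eqref{E:Phinorm} gives
\[
	\big\|\Phi^{u,0}_{\phi(t,u);y}(\xi)\big\|_{\phi(t,u)} \ = \ \|\xi\|_{\phi(t,u)}.
\]
Then I must check that $g^M_0$ and $g^M_{\phi(t,u)}$ induce the same norm on $T^*_{(y,0)}M$ (true since both restrict to $g^\pM$ there, $u=0$ being in the product region for every parameter), and that $g^M_{\phi(t,u)}$ and $g^M_t$ induce the same norm on $T^*_{(y,u)}M$. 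The latter is exactly where the specific choice of the exponent $\phi(t,u)=t^{r(u)}$ in \eqref{E:phitu} enters: for $u\ge -1$ one has $\phi(t,u)=t$, so the norms literally coincide; for $u\le -2$ one has $\phi(t,u)=1$, but the restriction of $g^M_s$ to $\pM\times\{u\}$ with $u\le -2$ is independent of $s$ by property (iii) of Subsection~\ref{SS:properties gMa} (for $s<1$ the metric $g^M_s$ agrees with $g^M$ on $\pM\times(-\infty,-1+s)$, hence on $\pM\times\{u\}$ for $u\le -2$ and all $s$, including $s=1$), so again the norms coincide; for $-2<u<-1$ one needs to observe that in this range the metric $g^M_s$ restricted to $\pM\times\{u\}$ is still $g^\pM$ for all $s\le 1$ by property (iv) (since $-\tfrac23+s\le -\tfrac23+1 = \tfrac13 > u$ fails, so one must instead invoke that this slice lies in the product region: $-2<u<-1$ forces $u < -1+s$ is false for small $s$, so the correct justification is property (iii) together with the definition $g^M_t = s_t g^M + (1-s_t)g^U$ and $s_t(u)=s(u-t)$, giving $g^M_t$ on $\pM\times\{u\}$ as an interpolation between $g^M$ and $g^U$ whose restriction to the slice is $g^\pM$ regardless, since both $g^M\big|_{\pM\times\{u\}}$ — wait, this is not automatic).

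The main obstacle, then, is precisely this last verification: showing that $g^M_{\phi(t,u)}$ and $g^M_t$ restrict to the same quadratic form on $T^*_{(y,u)}M$ for \emph{every} $u\le 0$, not just in the two extreme regimes. The clean way to handle it is to note that the deformation $g^M_t$ on the collar has the form $g^M_t = s_t\,g^M + (1-s_t)\,g^U$ and that the tangential--tangential block of $g^M$ on the slice $\pM\times\{u\}$, in the product coordinates, need not equal $g^\pM$ for general $u$ — so one does \emph{not} in general have norm-independence on intermediate slices. This means the proof as I sketched it is incomplete, and the actual argument must track the full metric, not just slice norms. I expect the authors' proof instead proceeds by a more careful bookkeeping: one shows directly from \eqref{E:construction ct} and the isometry property \eqref{E:Phinorm} that $\oc_{t,u}(\xi)^2 = -\|\Phi^{u,0}_{\phi(t,u);y}(\xi)\|_0^2 = -\|\xi\|^2_{\phi(t,u)}$, and then the decisive claim to establish is the pointwise identity $\|\xi\|^2_{\phi(t,u)} = \|\xi\|^2_t$ on $T^*_{(y,u)}M$, which should follow because $\phi(t,u)$ was chosen to agree with $t$ on the region where $g^M_t$ genuinely varies with $t$ and to equal $1$ elsewhere where the slice metric is $t$-independent — i.e., the design of $\phi$ via the cutoff $r$ is exactly matched to the cutoff $s$ defining $g^M_t$, so that $g^M_{\phi(t,u)}$ and $g^M_t$ do coincide on each $T^*_{(y,u)}M$. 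Verifying that match, via the explicit interplay of $r$ and $s$, is the one genuinely non-formal step; everything else is substitution.
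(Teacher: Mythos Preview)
Your overall strategy matches the paper's: compute $\oc_{t,u}(\xi)^2=-\|\Phi^{u,0}_{\phi(t,u);y}(\xi)\|^2$ at the boundary, use the isometry \eqref{E:Phinorm} to get $-\|\xi\|_{\phi(t,u)}^2$, and then argue that $\|\xi\|_{\phi(t,u)}=\|\xi\|_t$ at $(y,u)$. Where you went astray is the three-way split $u\ge -1$, $-2<u<-1$, $u\le -2$; the middle range is where you got tangled, and it is unnecessary.

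The paper uses only two cases. For $u\ge -1$ one has $\phi(t,u)=t$, so $\|\xi\|_{\phi(t,u)}=\|\xi\|_t$ trivially. For $u<-1$ the point you missed is that property~(iii) of Subsection~\ref{SS:properties gMa} says $g^M_s$ coincides with $g^M$ on $\pM\times(-\infty,-1+s)$; since $u<-1\le -1+s$ for \emph{every} $s\in[0,1]$, the slice $\pM\times\{u\}$ lies in this coincidence region for every parameter value, so $\|\xi\|_s$ is independent of $s$ there. In particular $\|\xi\|_{\phi(t,u)}=\|\xi\|_1=\|\xi\|_t$, regardless of what $\phi(t,u)$ actually equals. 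The cutoff $r$ defining $\phi$ plays no role in this case beyond keeping $\phi(t,u)\in[0,1]$; there is no ``matching of cutoffs'' to verify. (A minor point: $\oc_0$ satisfies $\oc_0(\zeta)^2=-g^M(\zeta,\zeta)$, not $-g^M_0(\zeta,\zeta)$, since it comes from the original Clifford action $c$; the two agree on $\pM$ by property~(v), which is what the paper invokes.)
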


\begin{proof}
First, we consider the case $u\ge-1$. Then $\phi(t,u)=t$ and
\begin{equation}\label{E:octu2 u>-1}
	\oc_{t;u}(\xi)^2\ := \ \oc_0\big(\Phi^{u,0}_{t;y}(\xi)\big)^2
	\ = \ -\,g^M\big(\Phi^{u,0}_{t;y}(\xi),\Phi^{u,0}_{t;y}(\xi)\big)
	\ = \ -\,\big\|\Phi^{u,0}_{t;y}(\xi)\big\|_t^2,
\end{equation}
where in the last equality we used that (cf. Subsection~\ref{SS:properties gMa}(v))
$g^M_t\big|_{\pM}= g^M\big|_\pM$ for all $t$. The equality \eqref{E:octu2} follows now from \eqref{E:Phinorm}.

Consider now the case when $u<-1$. Recall from Subsection~\ref{SS:properties gMa}(iii) that the restriction of $g^M_\tau$ to $\pM\times(-\infty,-1)$ is independent of $\tau\ge0$ and is equal to the restriction of $g^M$.
In particular,
\begin{equation}\label{E:xi2tau2}
		\|\xi\|_{\phi(t,u)} \ = \ \|\xi\|_t\ = \ \|\xi\|_1, \qquad\text{for all}\quad 0\le{}t\le1, \ \ u<-1.
\end{equation}
Hence, as in \eqref{E:octu2 u>-1}, we obtain
\begin{multline}\notag
	\oc_{t;u}(\xi)^2\ := \ \oc_0\big(\Phi^{u,0}_{\phi(t,u);y}(\xi)\big)^2
	\ = \ -\,\big\|\Phi^{u,0}_{\phi(t,u);y}(\xi)\big\|^2_1
	\\ = \ -\,\big\|\Phi^{u,0}_{\phi(t,u);y}(\xi)\big\|^2_{\phi(t,u)}
	\ \overset{\text{by} \ \eqref{E:Phinorm}}= \
	-\,\|\xi\|^2_{\phi(t,u)} \ \overset{\text{by} \ \eqref{E:xi2tau2}}= \
	-\,\|\xi\|^2_t.
\end{multline}
\end{proof}

\begin{lemma}\label{L:ct(Phi)}
Fix $0\le{}t\le1$, $u_0\le0$, and $y\in \pM$. Assume that either $u_0\le-2$ or $t=1$. Then for all  $\xi\in T^*_{(y,u_0)}M$, we have
\begin{equation}\label{E:ct(Phi)}
	c_t(\xi)\ = \ c(\xi).
\end{equation}
\end{lemma}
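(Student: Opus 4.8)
The plan is to reduce the statement to the single observation that the original Clifford action $c$, read through the product identification $\E\big|_U = E\times(-\infty,0]$ of Subsection~\ref{SS:product on E}, is recovered from its boundary restriction $\oc_0$ by parallel transport of covectors along the rays with respect to the Levi-Civita connection $\n^{LC}_1$ of $g^M = g^M_1$, namely
\[
	\oc_u(\xi)\ = \ \oc_0\big(\Phi^{u,0}_{1;y}(\xi)\big)
	\qquad\text{for all}\quad y\in\pM,\ u\le 0,\ \xi\in T^*_{(y,u)}M.
\]
Granting this, observe that under either hypothesis $\phi(t,u_0) = 1$: if $t = 1$ then $\phi(1,u_0) = 1^{r(u_0)} = 1$, while if $u_0\le -2$ then $r(u_0) = 0$ and $\phi(t,u_0) = t^{0} = 1$. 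Hence $\Phi^{u_0,0}_{\phi(t,u_0);y} = \Phi^{u_0,0}_{1;y}$, and combining the defining formula \eqref{E:construction ct} with the observation gives $\oc_{t,u_0}(\xi) = \oc_0\big(\Phi^{u_0,0}_{1;y}(\xi)\big) = \oc_{u_0}(\xi)$ for every $\xi\in T^*_{(y,u_0)}M$. By \eqref{E:octu} and \eqref{E:ocu} this means $c_t(\xi)\cdot(e,u_0) = \big(\oc_{u_0}(\xi)\cdot e,\,u_0\big) = c(\xi)\cdot(e,u_0)$ for every $e\in E_y$, i.e.\ $c_t(\xi) = c(\xi)$ on $T^*_{(y,u_0)}M$, as claimed.

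The substantive step is the observation, which is essentially the Clifford connection property of $\n^\E$ --- the same connection used to build the product identification --- applied along the rays. I would fix $y\in\pM$, $u_0\le 0$, $e\in E_y$ and $\xi\in T^*_{(y,u_0)}M$, and let $u\mapsto\xi(u)$, $u\in[u_0,0]$, be the $\n^{LC}_1$-parallel transport of $\xi$ along $\{y\}\times[u_0,0]$, so $\xi(u_0) = \xi$ and $\xi(0) = \Phi^{u_0,0}_{1;y}(\xi)$. The section $f(u) := c(\xi(u))\cdot(e,u)$ of $\E$ along this ray is represented, in the product identification of Subsection~\ref{SS:product on E}, by the $E_y$-valued function $u\mapsto \oc_u(\xi(u))\cdot e$, and in that identification $\n^\E_{\frac{\p}{\p u}}$ is the ordinary $u$-derivative; so it suffices to check $\n^\E_{\frac{\p}{\p u}}f = 0$. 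Applying the Clifford connection identity \eqref{E:Clifford connection} with $v = \frac{\p}{\p u}$, the right-hand side is the sum of $c$ applied to the $\n^{LC}_1$-covariant derivative of $\xi(\cdot)$ along the ray, which vanishes because $\xi(\cdot)$ is parallel, and $c(\xi(u))\cdot\n^\E_{\frac{\p}{\p u}}(e,u)$, which vanishes by \eqref{E:flat(e,u)}. Hence $\oc_u(\xi(u))\cdot e$ is independent of $u$; since $e\in E_y$ was arbitrary, $\oc_{u_0}(\xi) = \oc_{u_0}(\xi(u_0)) = \oc_0(\xi(0)) = \oc_0\big(\Phi^{u_0,0}_{1;y}(\xi)\big)$, which is the observation.

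I do not expect a genuine obstacle; the points needing care are bookkeeping. One is that the product structure on $\E\big|_U$ is the one built from the \emph{Clifford} connection $\n^\E$, so that \eqref{E:flat(e,u)} and \eqref{E:Clifford connection} refer to one and the same connection. The other is that $\oc_0$ is unambiguous: all the metrics $g^M_t$ agree on $\pM$ by Subsection~\ref{SS:properties gMa}(v), so there is a single Clifford relation $\oc_0(\eta)^2 = -g^M(\eta,\eta)$ at the slice $u = 0$ and no $t$-dependence enters there. As a consistency check one may compare with Lemma~\ref{L:octu2}: for $u_0\le -2$ both $\oc_{t,u_0}(\xi)^2$ and $\oc_{u_0}(\xi)^2$ equal $-\|\xi\|_t^2 = -\|\xi\|_1^2$, using that $g^M_t$ coincides with $g^M$ near $u_0$ (Subsection~\ref{SS:properties gMa}(iii)), so the claimed equality is at least compatible with the squaring relations.
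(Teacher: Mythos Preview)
Your proof is correct and follows essentially the same approach as the paper: both reduce to $\phi(t,u_0)=1$, then use the Clifford connection identity \eqref{E:Clifford connection} together with \eqref{E:flat(e,u)} to show that $u\mapsto \oc_u(\xi(u))\cdot e$ is constant along the ray, where $\xi(u)$ is the $\n^{LC}_1$-parallel transport of $\xi$. Your presentation isolates the key observation $\oc_u(\xi)=\oc_0\big(\Phi^{u,0}_{1;y}(\xi)\big)$ as a separate claim before applying it, whereas the paper runs the argument inline, but the content is identical.
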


\begin{proof}
By \eqref{E:phitu}, the conditions of the lemma imply that $\phi(t,u_0)=1$. Thus
\[
	\oc_{t,u_0}(\xi) \ = \ \oc_0\big(\Phi_{1;y}^{u_0,0}(\xi)\big).
\]
For $u\le0$ set
\[
	\xi(u)\ := \ \Phi_{1;y}^{u_0,u}(\xi) \ \in \ T^*_{(y,u)}M.
\]
Then $\xi(0)= \Phi^{u_0,0}_{1;y}(\xi)$, \/ $\xi(u_0)=\xi$. Recall that $g^M_1= g^M$ and $\n^{LC}_1= \n^{LC}$. Hence,
\begin{equation}\label{E:nabla xi(t)}\notag
	\n^{LC}_{\frac{\p}{\p u}}\, \xi(u) \ = \ 0.
\end{equation}

Recall from \eqref{E:flat(e,u)} that for every $e\in E$ we have $\n^\E_{\frac{\p}{\p u}}(e,u)=0$.  Using \eqref{E:Clifford connection}, we obtain
\begin{multline}\label{E:transport on M}\notag
	\frac{d}{d u}\Big(\, \oc_{u}\big(\xi(u)\big)\cdot e,u\,\Big)\ = \
 	\frac{d}{d u}\Big(\, c\big(\xi(u)\big)\cdot(e,u)\,\Big)\\ = \
 	c\big(\n^{LC}_{\frac{\p}{\p u}}\xi(u)\big)\cdot (e,u)\ + \
 	c\big(\xi(u)\big)\cdot\n^\E_{\frac{\p}{\p u}}(e,u)\ = \ 0.
 \end{multline}
Hence,
\[
	\oc_{t,u_0}\big(\xi\big)\cdot e \ = \
	\oc_{0}\big(\xi(0)\big)\cdot e
	\ = \ \oc_{u_0}\big(\xi(u_0)\big)\cdot e \ = \
	\oc_{u_0}(\xi)\cdot e.
\]
\end{proof}

\subsection{The family of Clifford actions on $M$}\label{SS:ct on M}
Lemma~\ref{L:octu2} implies that for all $0\le{}t\le1$ the map  $c_t:T^*\Big(\pM\times(-\infty,0]\Big)\to \End(\E)$ defined by \eqref{E:octu} defines a Clifford module structure on $\E$ compatible with the metric $g^M_t$. Lemma~\ref{L:ct(Phi)} implies that the restriction of this action to the cylinder $\pM\times(-\infty,-2)$ coincides with the original Clifford action $c(\xi)$. We now  extend $c_t$ to the whole manifold $M$, by setting $c_t(\xi)= c(\xi)$ for all $\xi\in T^*(M\backslash{}U)$. This is a smooth family of $G$-invariant Clifford actions, and for each $0\le{}t\le1$ the action $c_t$ is compatible with the metric $g^M_t$. In particular, $c_t$ satisfies condition (iv) of Proposition~\ref{P:deformation Dirac}. By Lemma~\ref{L:ct(Phi)}, the action $c_1$ is equal to the original action $c$, thus $c_t$ satisfies condition (i) of Proposition~\ref{P:deformation Dirac}.

It also follows directly from the construction of $c_t$ that the restriction of $c_t$ to the boundary is independent of $t$ and coincides with the restriction of $c$. This proves  part (ii) of Proposition~\ref{P:deformation Dirac}.

For $t<2/3$ the metric $g^M_t$ is a product on $\pM\times(-2/3+t,0]$, cf. Subsection~\ref{SS:properties gMa}(iv). Hence, it follows immediately from the construction of $c_t$ that $c_t$ is a product on $\pM\times(-2/3+t,0]$ (cf. Definition~\ref{D:product Dirac}(ii) for the definition of a product Clifford action).

\subsection{A Clifford connection on $\E$}\label{SS:Clifford connection}
To finish the proof of Proposition~\ref{P:deformation Dirac} it now remains to construct  a family $\n^\E_t$ of Clifford connections on $\E$ compatible with the Clifford action $c_t$, which  is a product near the boundary for $t=1$.

Corollary~3.41 of \cite{bgv} states that for every Clifford action on $\E$ there exists a compatible Clifford connection. This is proven by first constructing such a connection locally and then patching the local Clifford connections together by means of a partition of unity.  The same construction shows that given a smooth family of Clifford actions $c_t$, there exists a smooth family of Hermitian connections $\widehat{\n}^\E_t$, such that for each $t$ the connection $\widehat{\n}^\E_t$ is compatible with $c_t$. In this case we say that the family $\widehat{\n}^\E_t$ is compatible with the family $c_t$.

Let $\chi:M\to [0,1]$ be a smooth function such that
\[
	\chi(x)\ = \
	\begin{cases}
		1, \qquad &\text{for} \quad
		x\in M\backslash{}U\sqcup\big(\,\pM\times(-\infty,-3]\,\big);\\
		0 \qquad &\text{for}\quad x\in \pM\times[-2,0],
	\end{cases}
\]
and set
\[
	\tiln^\E_t\ := \ \chi\,\n^\E\ + \ (1-\chi)\,\widehat{\n}^E_t.
\]
The restrictions of $\tiln^\E_t$ to the cylinder $\pM\times[-2,0]$ is equal to $\widehat{\n}^\E_t$ and, hence, is compatible with $c_t$. Since $c_t=c$ on $M\backslash{}U\sqcup\big(\pM\times(-\infty,-2)\big)$, the restrictions of both connections $\n^\E$ and $\widehat{\n}^\E_t$ to $M\backslash{}U\sqcup\big(\pM\times(-\infty,-2)\big)$ are compatible with $c_t$. Hence, so is their convex linear combination $\tiln^\E_t$. We conclude that $\tiln^\E_t$ is compatible with $c_t$ and its restriction to $M\backslash{}U\sqcup\big(\pM\times(-\infty,-3]\big)$ is equal to the original connection $\n^\E$.

Let $\tilde{\n}^\E_t$ be an arbitrary family of connections compatible with $c_t$ whose restriction to $M\backslash{}U\sqcup\big(\pM\times(-\infty,-3]\big)$ is equal to  $\n^\E$.  By averaging over the action of $G$ we can assume that $\tilde{\n}^\E_t$ is a $G$-invariant connection. Notice that  by construction, $\tiln^\E_t$ satisfies condition (v) of Propositions~\ref{P:deformation Dirac}, but does not necessarily satisfy the rest of the conditions of  this Proposition. Our next step is to deform $\tiln^\E_t$ to a new family of connections $\n^\E_t$ which does satisfy the conditions of Propositions~\ref{P:deformation Dirac}.

Let $\tiln^E$ be a $G$-invariant connection on $E= \E\big|_\pM$ compatible with the restriction of the Clifford action $c$ to the boundary. Let $\tiln^{E\times(-\infty,0]}$ be the connection on $\E\big|_U$ induced by $\tiln^E$. Since for $t<2/3$ the Clifford action $c_t$ is a product on the cylinder $\pM\times(-2/3+t,0]$, we conclude that the restriction of $\tiln^{E\times(-\infty,0]}$ to this cylinder is compatible with $c_t$.

Let $s_t:\RR\to \RR$ be as in Subsection~\ref{SS:deformation of the metric}. Let $\n^\E_t$ denote the family of connections on $\E$, whose restriction to $M\backslash{}U$ is equal to $\tiln^\E_t\big|_{M\backslash{}U}=\n^\E$ and whose restriction to $U= \pM\times(-\infty,0]$ is given by
\[
	\n^\E_t \ := \ s_{t+\frac13}\,\tilde{\n}^\E_t\ + \
	(1-s_{t+\frac13})\, \tiln^{E\times(-\infty,0]}.
\]
If $t\ge2/3$, then $\n^\E_t=\tiln^\E_t$ and, hence, is compatible with $c_t$. If $t<2/3$ then the restriction of $\n^\E_t$ to the cylinder $\pM\times(-\infty, -2/3+t]$ coincides with $\tiln^\E_t$ and is compatible with $c_t$. The restriction of $\n^\E_t$ to  $\pM\times(-2/3+t,0]$ is a convex combination of connections $\tiln^\E$ and $\tiln^{E\times(-\infty,0]}$, both of which are compatible with the restriction of $c_t$ to this cylinder. Hence, $\n^\E_t$ is compatible with $c_t$.

For all $0\le{}t\le1$ the restriction of $\n^\E_t$ to $M\backslash{}U\sqcup\big(\pM\times(-\infty,-3]\big)$ coincides with $\n^\E$.  For $t<1/3$ the restriction of the connection $\n^\E_t$ to the cylinder $\pM\times(-1/3+t,0]$ is equal to the product connection $\tiln^{E\times(-\infty,0]}$. Hence, the connection $\n^\E_t$ satisfies the conditions of Proposition~\ref{P:deformation Dirac}. The proposition is proven.
\hfill$\square$



\begin{thebibliography}{9999}

\bibitem[APS1]{aps} Atiyah, M. F., Patodi, V. K., Singer, I. M.:
Spectral asymmetry and Riemannian Geometry. I
Math. Proc. Camb. Phil. Soc. 77 (1975), 43--69.

\bibitem[APS2]{aps2} Atiyah, M. F., Patodi, V. K., Singer, I. M.:
Spectral asymmetry and Riemannian Geometry. II
Math. Proc. Camb. Phil. Soc. 78 (1975), 405--432.

\bibitem[APS3]{aps3}
M.~F. Atiyah, V.~K. Patodi, and I.~M. Singer.
Spectral asymmetry and {R}iemannian geometry. {III}.
Math. Proc. Cambridge Philos. Soc., 79(1), 71--99, 1976.

\bibitem[BGV]{bgv} Berline, N., Getzler, E., Vergne, M.: Heat kernels and Dirac operators.
Grundlehren der Mathematischen Wissenschaften 298. Springer-Verlag, Berlin, 1992.

\bibitem[BeVe]{BerlineVergne85}
N.~Berline and M.~Vergne.
\newblock The equivariant index and {K}irillov's character formula.
\newblock {\em Amer. J. Math.}, 107(5):1159--1190, 1985.

\bibitem[Der]{der} Derdzinski, A.: Einstein metrics in dimension four.
Handbook of differential geometry, Vol. I, 419--707, North-Holland, Amsterdam, 2000.

\bibitem[DeM1]{dm1} Derdzinski, A., Maschler, G.: Local classification
of conformally-Einstein K\"ahler metrics in higher dimensions. Proc. London
Math. Soc. 87 (2003), 779--819.

\bibitem[DeM2]{dm2} Derdzinski, A., Maschler, G.: Special K\"ahler-Ricci
potentials on compact K\"ahler manifolds. J. reine angew. Math. 593 (2006), 73--116.

\bibitem[DeM3]{dm3} Derdzinski, A., Maschler, G.: A moduli curve for compact
conformally-Einstein K\"ahler manifolds. Compositio Math. 141 (2005), 1029--1080.


\bibitem[Don]{don} Donnelly, H.: Eta invariants for $G$-spaces.
Indiana Univ. Math. J. 27 (1978), 889--918.

\bibitem[EGH]{egh} Eguchi, T., Gilkey, P. B., Hanson, A. J.: Gravitation, gauge theories and differential
geometry. Phys. Rep. 66 (1980), 213--393.

\bibitem[Gil1]{gil} Gilkey, P. B.: The boundary integrand in the formula for
the signature and Euler characteristic of a Riemannian manifold with boundary.
Adv. Math. 15 (1975), 334--360.

\bibitem[Gil2]{gil2} Gilkey, P. B.: On the index of geometrical operators for
Riemannian manifolds with boundary. Adv. Math. 102 (1993), 129-183.

\bibitem[Gil3]{glk1} Gilkey, P. B.: Invariance theory, the heat equation,
and the Atiyah-Singer index theorem. Studies in Advanced Mathematics. CRC Press,
Boca Raton, FL 1995.

\bibitem[Goe]{goe} Goette, S.: Equivariant $\eta$-invariants and $\eta$-forms.
J. reine angew. Math 526 (2000), 181--236.

\bibitem[Grb]{grb} Grubb, G.: Heat operator trace expansions and index for general
Atiyah-Patodi-Singer boundary problems. Comm. Partial Differential equations
17 (1992), 2031--2077.


\bibitem[Hir]{Hirzebruch71}
F.~Hirzebruch.
\newblock The signature theorem: reminiscences and recreation.
\newblock In {\em Prospects in mathematics ({P}roc. {S}ympos., {P}rinceton
  {U}niv., {P}rinceton, {N}.{J}., 1970)}, pages 3--31. Ann. of Math. Studies,
  No. 70. Princeton Univ. Press, Princeton, N.J., 1971.

\bibitem[Hit]{hit} Hitchin, N. J.: Einstein metrics and the eta-invariant.
Boll. Un. Mat. Ital. B (7) 11 (1997), suppl., 95--105.

\bibitem[Kan]{kan} Kankaanrinta, Marja: Equivariant collaring, tubular
neighbourhood and gluing theorems for proper Lie group actions. Algebr. Geom. Topol. 7 (2007), 1--27.

\bibitem[LaM]{lami} Lawson, H. B., Michelsohn M.-L., Spin geometry.
Princeton University Press, Princeton, New Jersey, 1989.

\bibitem[Ms1]{mas} Maschler, G.: The eta invariant in the doubly
K\"ahlerian conformally compact Einstein case. Math. Z. 271 (2012), 1065--1073.

\bibitem[Ms2]{mas1} Maschler, G.:
Special K\"ahler-Ricci potentials and Ricci solitons.
Ann. Global Anal. Geom. 34 (2008), 367--380.

\bibitem[Mor]{mor} Moroianu, S.: Boundaries of locally conformally flat manifolds in
dimensions 4k. arXiv:1506.05968[math.DG].

\bibitem[ONe]{on} O'Neill, B.: Semi-Riemannian geometry. With applications to relativity.
Pure and Applied Mathematics, 103. Academic Press, Inc. 1983.

\bibitem[Pet]{pet} Petersen, P.: Riemannian geometry. Second edition.
Graduate Texts in Mathematics, 171. Springer, New York, 2006.

\bibitem[Sch]{sch} Schmidt, F.: Algebraic Curvature Operators and the Ricci Vector Field.
Dissertation Universit\"at T\"ubingen 2012, https://www.math.uni-tuebingen.de/user/loose/studium/Dissertationen/Diss.Schmidt.pdf.

\bibitem[Wat]{wat} Watanabe, Y.: Riemannian metrics on principal circle bundles
over locally symmetric K\"ahlerian manifolds. Kodai Math. J. 5 (1982) 111--121.


\end{thebibliography}

\end{document}